\def\R{\mathbb{R}}
\def\N{\mathbb{N}}
\def\Co{\mathbb{C}}
\def\C{\mathcal{C}}
\def\P{\mathbb{P}}
\def\H{\mathcal{H}}
\def\F{\mathcal{F}}
\newtheorem{theorem}{Theorem}
\newtheorem{corollary}[theorem]{Corollary}
\newtheorem{definition}[theorem]{Definition}
\newtheorem{remark}[theorem]{Remark}
\newtheorem{proposition}[theorem]{Proposition}
\newtheorem{lemma}[theorem]{Lemma}
\newtheorem{claim}[theorem]{Claim}
\title{Extremizers for Fourier restriction inequalities: convex arcs}
\author{Diogo Oliveira e Silva}
\address{
        Diogo Oliveira e Silva\\
        Department of Mathematics\\
        University of California \\
        Berkeley, CA 94720-3840, USA}
\email{dosilva@math.berkeley.edu}
\thanks{The  author was partially supported by the Funda\c{c}\~{a}o para a Ci\^{e}ncia e a Tecnologia 
(FCT/Portugal grant SFRH/BD/28041/2006) 
and the National Science Foundation under agreement DMS-0901569.}
\keywords{Extremizers, optimal constants, restriction theorem, stationary phase.}
\subjclass[2010]{42A05}
\date{\today}                                           
\begin{document}
\maketitle

\begin{abstract}
{\noindent We establish the existence of extremizers for a Fourier restriction inequality on planar convex arcs without points with colinear tangents whose curvature satisfies a natural assumption. More generally, we prove that any extremizing sequence of nonnegative functions has a subsequence which converges to an extremizer.}
\end{abstract}

\tableofcontents

\section{Introduction}
Consider a compact arc  $\Gamma\subset\R^2$ of a smooth, convex curve equipped with arclength measure $\sigma$. Assume the curvature $\kappa$ of $\Gamma$ to be positive everywhere; equivalently, assume that $\lambda:=\min_\Gamma\kappa$ is  a positive real number. 
Let $\ell:=\sigma(\Gamma)$ and parametrize $\Gamma$ by arclength:

\begin{displaymath}
\begin{array}{rcl}
\gamma: [0,\ell] & \rightarrow & \R^2\\
s & \mapsto & \gamma(s)=(x(s),y(s)).
\end{array}
\end{displaymath}

For $s\in [0,\ell]$, let $t(s)=(x'(s),y'(s))$ be the tangent indicatrix and let $\theta(s)\in S^1$ measure the net rotation described by the vector $t(s)$ as we run the curve $\gamma$ from $0$ to $s$. In other words, if we let $e_1=(1,0)$, then $\theta$ is the unique continuous function satisfying $t(s)=(\cos \theta(s),\sin \theta(s))$ for every $s\in [0,\ell]$, and such that $\theta(0)=\textrm{arc}\cos( e_1\cdot t(0) )$. The function $\theta$ is related to the curvature $\kappa$ via  
$$\theta(s)=\int_0^s \kappa(t)dt.$$

We further assume that the arc $\Gamma$ has no points with colinear tangents\footnote{We hope to remove this assumption in a later work.} i.e. points $\gamma(s_0),\gamma(s_1)\in\Gamma$ for which $t(s_0)=-t(s_1)$. By compactness, this means that there exists some constant $\delta_0>0$ such that 
\begin{equation}\label{noantipodes}
|t(s)+t(s')|\geq\delta_0,\;\;\forall s,s'\in [0,\ell].
\end{equation}

Certain subsets of $\Gamma$ will be of special interest to us. A cap $\C\subset \Gamma$ is a set of the form
$$\C=\C(s,r)=\{\gamma(s')\in\Gamma: |s-s'|< r\}$$
for some $s\in[0,\ell]$ and $r>0$. We will write $|\C|:=\sigma(\C)$.

The space $L^2(\sigma)$ consists of all functions $f:{\Gamma}\rightarrow\Co$ for which the quantity
$$\|f\|_{L^2(\sigma)}^2:=\int_{{\Gamma}} |f(z)|^2 d\sigma(z)$$
is finite. Given $f\in L^2(\sigma)$, the Fourier transform of the measure $f\sigma$ is defined as
$$\widehat{f\sigma}(x,t):=\int_\Gamma f(z)e^{-i(x,t)\cdot z}d\sigma(z).$$

\noindent The Tomas-Stein inequality \cite{T}, whose proof we recall in the next section, states that there exists a finite constant ${\mathbf C}[\Gamma]<\infty$ for which
\begin{equation}\label{TS}
\|\widehat{f\sigma}\|_{L^6(\R^2)}\leq {\mathbf C}[\Gamma] \|f\|_{L^2(\sigma)}
\end{equation}
for every $f\in L^2(\sigma)$; by ${\mathbf C}[\Gamma]$ we mean the optimal constant defined by 
$${\mathbf C}[\Gamma]:=\sup_{0\neq f\in L^2(\sigma)} \|\widehat{f\sigma}\|_6 \|f\|_{L^2(\sigma)}^{-1}.$$

\begin{definition}
 An {\em extremizing sequence} for the inequality \eqref{TS} is a sequence $\{f_n\}$ of functions in $L^2(\sigma)$ satisfying $\|f_n\|_{L^2(\sigma)}\leq 1$ such that $\|\widehat{f_n\sigma}\|_{6}\rightarrow {\mathbf C}[\Gamma]$ as $n\rightarrow\infty$.
An {\em extremizer} for the inequality \eqref{TS} is  a nonzero function $f\in L^2(\sigma)$ which satisfies $\|\widehat{f\sigma}\|_6= {\mathbf C}[\Gamma]\|f\|_{L^2(\sigma)}$.
\end{definition}

\begin{definition}
A nonzero function $f\in L^2(\sigma)$ is said to be a {\em $\delta$-near extremizer} for the inequality \eqref{TS} if $ \|\widehat{f\sigma}\|_6\geq (1-\delta){\mathbf C}[\Gamma]\|f\|_{L^2(\sigma)}.$
\end{definition}

A natural question is whether extremizers exist. More generally one can ask if extremizing sequences are precompact in $L^2(\sigma)$. Previous work includes the study of extremizers for Strichartz/Fourier restriction inequalities in \cite{K}, \cite{F} and \cite{CS}. Kunze \cite{K} proved the existence of extremizers for the parabola in $\R^2$ by showing that any nonnegative extremizing sequence is precompact. Foschi \cite{F}, whose work will be recalled in greater detail in \S \ref{sec:Comparing optimal constants}, showed that Gaussians are extremizers for this situation and computed the corresponding optimal constant. The best constant and extremizers for the paraboloid in $\R^3$ were also computed in \cite{F}. The existence of extremizers for the restriction on the sphere $S^2$ was proved by Christ and Shao in \cite{CS}, and this is to the best of our knowledge the only result concerning existence of extremizers for the endpoint restriction problem on a compact manifold. 

Other results on (non-)existence of extremizers and/or computation of sharp constants for Fourier restriction operators and Strichartz inequalities can be found in \cite{Ca, DMR, FVV1, FVV2, HZ, Q1, Q2}.

Here is our main result:

\begin{theorem}\label{main}
Let $\Gamma$ be a compact arc of a smooth, convex curve in the plane without points with colinear tangents, equipped with arclength measure $\sigma$. Assume that the curvature $\kappa$ of $\Gamma$ is a strictly positive function. If the second derivative of the curvature with respect to arclength satisfies
\begin{equation}\label{k2} 
\frac{d^2\kappa}{ds^2}(p_0)<\frac{3}{2}\kappa(p_0)^3
\end{equation}
at every $p_0\in\Gamma$ which is a global minimum of the curvature, then any extremizing sequence of nonnegative functions in $L^2(\sigma)$ for the inequality \eqref{TS} is precompact.
\end{theorem}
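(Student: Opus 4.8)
The plan is to follow the concentration-compactness paradigm adapted to the endpoint Fourier restriction setting, in the spirit of Christ--Shao \cite{CS}, with the curvature hypothesis \eqref{k2} entering only at the final comparison step. Given an extremizing sequence $\{f_n\}$ of nonnegative functions, the first task is to establish a \emph{dichotomy}: after passing to a subsequence, either the mass of $f_n$ concentrates (in the sense that there exist caps $\C_n$ with $|\C_n|\to 0$ on which $\int_{\C_n}f_n^2\,d\sigma\to 1$), or it does not. In the nonconcentration case one must show precompactness directly: this is where a refined Tomas-Stein/bilinear estimate is needed, saying that the $L^6$ norm of $\widehat{f\sigma}$ is controlled by a geometric-mean-type quantity that sees the spreading of $f$ across caps, so that an extremizing sequence cannot spread out without losing a definite amount of the $L^6$ norm. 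This should force $\{f_n\}$ to be tight and, combined with an $L^2$ profile decomposition or a direct compactness argument using the continuity of $f\mapsto\widehat{f\sigma}$ from $L^2(\sigma)$ into $L^6(\R^2)$ together with local compactness, yield a strongly convergent subsequence whose limit is an extremizer.

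The heart of the matter is therefore to \emph{rule out concentration}. Suppose for contradiction that $\int_{\C_n}f_n^2\to 1$ with $|\C_n|\to 0$; by compactness of $\Gamma$ we may assume the caps shrink to a point $p_\infty\in\Gamma$, and a localization argument shows that only the piece of $f_n$ living on $\C_n$ matters for the $L^6$ norm asymptotically, so that ${\mathbf C}[\Gamma]\le \limsup_n \|\widehat{(f_n\mathbf 1_{\C_n})\sigma}\|_6$. Now rescale: on a cap of radius $r$ about $p_\infty$ the arc is, to leading order, a parabola of curvature $\kappa(p_\infty)$, and the affine-invariance of the $L^6(\R^2)\leftarrow L^2$ restriction estimate for the parabola lets one transfer the problem to Foschi's sharp parabola inequality \cite{F}. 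The conclusion of this step should be an inequality of the form
\begin{equation}\label{capbound}
{\mathbf C}[\Gamma]\le {\mathbf C}_{\mathrm{par}}\bigl(\kappa(p_\infty)\bigr),
\end{equation}
where ${\mathbf C}_{\mathrm{par}}(\lambda)$ is the (explicitly known, by \cite{F}) optimal constant for the parabola of curvature $\lambda$, and where the extremal value on the right is approached only when $p_\infty$ is a point where the curvature is smallest. So if concentration occurs, necessarily at a global minimum $p_0$ of $\kappa$, one gets ${\mathbf C}[\Gamma]\le {\mathbf C}_{\mathrm{par}}(\kappa(p_0)) = {\mathbf C}_{\mathrm{par}}(\lambda)$.

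To derive a contradiction one must show that this inequality is \emph{strict}, i.e. that $\Gamma$ does strictly better than its osculating parabola at $p_0$; this is precisely where \eqref{k2} is used. The idea is to construct an explicit competitor: take a smooth bump $g_r$ supported on the cap $\C(p_0,r)$, modeled on (a rescaled truncation of) the Foschi extremizer for the parabola of curvature $\lambda$, and compute the asymptotic expansion of $\|\widehat{g_r\sigma}\|_6^2/\|g_r\|_{L^2(\sigma)}^2$ as $r\to 0$. The leading term is ${\mathbf C}_{\mathrm{par}}(\lambda)^2$; the first correction comes from the deviation of $\gamma$ from its osculating parabola, which is governed by $\tfrac{d\kappa}{ds}(p_0)$ and $\tfrac{d^2\kappa}{ds^2}(p_0)$. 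Since $p_0$ is a minimum of $\kappa$ one has $\tfrac{d\kappa}{ds}(p_0)=0$, so the correction is of order $r^2$ with a coefficient that is an affine combination of $\tfrac{d^2\kappa}{ds^2}(p_0)$ and $\kappa(p_0)^3$; the hypothesis \eqref{k2} is exactly the condition that this coefficient be positive, making $\|\widehat{g_r\sigma}\|_6 > {\mathbf C}_{\mathrm{par}}(\lambda)\|g_r\|_{L^2(\sigma)}$ for small $r$, hence ${\mathbf C}[\Gamma] > {\mathbf C}_{\mathrm{par}}(\lambda)$, contradicting \eqref{capbound}.

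The main obstacle I anticipate is twofold: first, making the localization and rescaling rigorous — one needs uniform-in-$n$ control showing that the part of $f_n$ outside $\C_n$ truly contributes negligibly to $\|\widehat{f_n\sigma}\|_6$, which requires a quantitative version of the Tomas-Stein argument (a cap estimate, i.e. a bound on $\|\widehat{f\sigma}\|_6$ in terms of $\sup_\C |\C|^{-\beta}\int_\C f^2$ for some gain $\beta>0$); and second, the perturbative computation leading to the coefficient in \eqref{k2}, which requires a careful stationary-phase/Taylor analysis of $\widehat{g_r\sigma}$ to second order, keeping track of how the $O(r^2)$ term in the arclength expansion of $\gamma(s)=\gamma(p_0) + (s-p_0)\,t(p_0) + \cdots$ feeds into the sextilinear form $\int |\widehat{g_r\sigma}|^6$. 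Getting the constant $\tfrac{3}{2}$ exactly right is the delicate part and is where the bulk of the technical work will lie.
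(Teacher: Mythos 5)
Your proposal follows essentially the same route as the paper: a cap-refined Tomas--Stein estimate feeding a concentration-compactness dichotomy, elimination of the concentration scenario by computing the limiting localized operator norm ${\mathbf C_F}[\kappa(p)]$ (forcing concentration at a curvature minimum), and a second-order variational expansion with rescaled Gaussian trial functions in which hypothesis \eqref{k2} is exactly the sign condition making ${\mathbf C}[\Gamma]>{\mathbf C_F}[\lambda]$. The one point worth keeping in mind when you execute the nonconcentration case is that nonnegativity is what guarantees the weak limit of a uniformly integrable extremizing sequence is nonzero, which is the obstruction that must be ruled out before upgrading weak to strong convergence.
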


It is natural to ask about the significance of the geometric condition \eqref{k2}. For instance, if it is not satisfied, does this mean that extremizers fail to exist? While we are at the moment unable to provide a complete answer to this question, we analyze the situation in which condition \eqref{k2} fails in a rather strong sense in the companion paper \cite{OS}, and establish a complementary (negative) result along these lines. 

We conclude this section by briefly outlining the structure of this paper and giving an idea of the proof of Theorem \ref{main}.

In the next section we follow the classical argument of Carleson and Sj\"{o}lin \cite{CSjo} to prove the Tomas-Stein inequality \eqref{TS}. Using the analysis of a certain bilinear form from \cite{MVV, Q1}, we establish the following refinement:
\begin{equation}\label{refinedTS}
\|\widehat{f\sigma}\|_{L^6(\R^2)}\lesssim \|f\|_{L^2(\sigma)}^{1-\beta/2}\sup_{\C\subset\Gamma}\Big(|\C|^{-1/4}\int_\C|f|^{3/2}d\sigma\Big)^{\beta/3},
\end{equation}
where the supremum ranges over all caps $\C\subset\Gamma$ and $\beta>0$ is a small universal constant. 

We use estimate \eqref{refinedTS} in \S \ref{sec:The decomposition algorithm} to describe an iterative procedure which takes a nonnegative function $f\in L^2(\sigma)$ as input and produces a sequence of functions $\{f_n\}$ associated with disjoint caps $\{\C_n\}\subset\Gamma$ for which $f=\sum_n f_n$ in the $L^2$-sense. This decomposition enjoys certain geometric properties which are described in \S \ref{sec:Geometric properties of the decomposition}. After introducing a suitable metric on the set of all caps, we establish the fact that distant caps interact weakly. Together with the decomposition algorithm, this implies an inequality of geometric nature which is a key step towards gaining control of  extremizing sequences.

In \S \ref{sec:Upper bounds for extremizing sequences} we prove that any near extremizer satisfies appropriately scaled upper bounds with respect to some cap, and we use this to obtain a result of {\em concentration compactness} \cite{Li} flavor in \S \ref{sec:A concentration compactness result}. This result basically states that a nonnegative extremizing sequence behaves in one of two possible ways (up to extraction of a subsequence and up to a small $L^2$ error): it is either uniformly integrable, or it concentrates at a point. Precompactness can be derived in the former case, since the main obstruction pointed out in \cite{FVV1} is easy to rule out: in fact, $L^2$ weak limits of nonnegative, uniformly integrable sequences of functions are {\em nonzero}. 

The proof is therefore finished once we show that concentration cannot occur. Aiming at a contradiction, we explore some of the properties that an extremizing sequence which concentrates at a point would have to enjoy. In \S \ref{sec:Exploring concentration}, we compute a certain limiting operator norm exactly, and in particular show that an extremizing sequence which concentrates must do so at a point of minimal curvature.
A second ingredient consists in comparing the constant ${\mathbf C}[\Gamma]$ from inequality \eqref{TS} with the optimal constant for the adjoint Fourier restriction inequality on an appropriately dilated parabola equipped with projection measure, as studied in \cite{F}. We accomplish this in \S \ref{sec:Comparing optimal constants}, postponing some of the more technical estimates to Appendix 1. We derive the desired contradiction in \S \ref{sec:The end of the proof}, and that concludes the proof of Theorem \ref{main}.
\vspace{.5cm}

{\bf Notation.}
If $x,y$ are real numbers, we will write $x=O(y)$ or $x\lesssim y$ if there exists a finite constant $C$ such that $|x|\leq C|y|$, and $x\asymp y$ if $C^{-1}|y|\leq |x|\leq C|y|$ for some finite constant $C\neq 0$. If we want to make explicit the dependence of the constant $C$  on some parameter $\alpha$, we will  write $x=O_\alpha(y)$ or $x\lesssim_\alpha y$. As is customary the constant $C$ is allowed to change from line to line.
If $\lambda\in\R$ and $A\subseteq\R^d$, we denote its $\lambda$-dilation by $\lambda\cdot A:=\{\lambda x: x\in A\}$. The Minkowski sum of $A$ with itself will be denoted by $A+A=\{x+x': x\in A \textrm{ and } x'\in A\}$.
Sharp constants will always appear in bold face.
By $\Re{z}$ and $\Im{z}$ we will denote, respectively, the real and imaginary parts of the complex number $z\in\Co$.
\vspace{.5cm}

{\bf Acknowledgments.} The author would like to thank his dissertation advisor, Michael Christ, for suggesting the problem, and for many helpful discussions and comments. He would also like to thank Ren\'{e} Quilodr\'{a}n for reading an earlier draft of the paper and for several suggestions on how to improve the exposition.

\section{The cap estimate}

 Let $f,g\in L^2(\sigma)$.
We seek to estimate the $L^3$ norm of the product
\begin{equation}\label{fsigmagsigma}
\widehat{f\sigma}\cdot\widehat{g\sigma}(x,t)=\int_0^\ell\int_0^\ell f(\gamma(s))g(\gamma(s')) e^{-i(x,t)\cdot(\gamma(s)+\gamma(s'))}dsds'.
\end{equation}
For that purpose, it will be enough to estimate the $L^{3/2}$ norm of the convolution of measures $f\sigma\ast g\sigma$, which is defined by duality as
$$\langle f\sigma\ast g\sigma, \varphi\rangle=\int_0^\ell\int_0^\ell f(\gamma(s)) g(\gamma(s')) \varphi(\gamma(s)+\gamma(s')) ds ds'$$
for any test function $\varphi\in C^\infty_0(\Gamma,\sigma)$.

To analyze the integral \eqref{fsigmagsigma}, we make the following change of variables:
\begin{equation}\label{changeofvariables}
(s,s')\mapsto(u,v)=(x(s)+x(s'),y(s)+y(s')),
\end{equation}
Splitting
$$f(\gamma(s))g(\gamma(s'))=f(\gamma(s))g(\gamma(s')) (\chi_{\{s>s'\}}+\chi_{\{s<s'\}})\;\textrm{ for a.e. }(s,s')$$
and using the triangle inequality, we lose no generality in assuming that $s>s'$ in the support of $f(\gamma(s))g(\gamma(s'))$. As a consequence, the transformation \eqref{changeofvariables} is injective in the support of $f(\gamma(s)) g(\gamma(s'))$.
It follows that
\begin{equation*}
\widehat{f\sigma}\cdot\widehat{g\sigma}(x,t)=\iint_{\Gamma+\Gamma} f(\gamma(s(u,v)))g(\gamma(s'(u,v))) e^{-i(x,t)\cdot(u,v)} J^{-1} dudv,
\end{equation*}
where by $J=J(s(u,v),s'(u,v))$ we denote the Jacobian of the transformation \eqref{changeofvariables} on the region $\{s>s'\}$:

\begin{equation*}
J(s,s')=\Big|\frac {\partial(u,v)}{\partial(s,s')}\Big|=|x'(s)y'(s')-x'(s')y'(s)|
=|\sin(\theta(s)-\theta(s'))|.
\end{equation*}
Note that, for $(u,v)\in\R^2$,

\begin{displaymath}
f\sigma\ast g\sigma(u,v) = \left\{ \begin{array}{ll}
f(\gamma(s(u,v)))g(\gamma(s'(u,v))) J^{-1}& \textrm{if $(u,v)\in\Gamma+\Gamma$}\\
\qquad\qquad\quad\; 0 & \textrm{otherwise.}
\end{array} \right.
\end{displaymath}

\noindent The Hausdorff-Young inequality implies that
\begin{equation}\label{firstCS}
\|\widehat{f\sigma}\cdot\widehat{g\sigma}\|_3\leq\|f\sigma\ast g\sigma\|_{3/2}\lesssim \Big(\int_0^\ell \int_0^\ell |f(\gamma(s))|^{3/2}|g(\gamma(s'))|^{3/2} |\sin(\theta(s)-\theta(s'))|^{-1/2} ds ds'\Big)^{2/3}.
\end{equation}

\noindent Since $\Gamma$ has no points with colinear tangents (i.e. condition \eqref{noantipodes} holds),
$$|\sin(\theta(s)-\theta(s'))|\geq \min\Big\{\frac{2}{\pi}|\theta(s)-\theta(s')|,\delta_0(1+O(\delta_0^2))\Big\}$$
for every $s,s'\in [0,\ell]$. On the other hand, since $\lambda=\min_{\Gamma}\kappa$,
$$|\theta(s)-\theta(s')|=\Big|\int_{s'}^s\kappa(t)dt\Big|\geq\lambda|s-s'|.$$
It follows that
\begin{align*}
\|f\sigma\ast g\sigma\|_{3/2}^{3/2}&\lesssim \int_0^\ell \int_0^\ell |f(\gamma(s))|^{3/2}|g(\gamma(s'))|^{3/2} |\sin(\theta(s)-\theta(s'))|^{-1/2} ds ds'\\
&\lesssim_{\lambda,\delta_0} \int_0^\ell \int_0^\ell |f(\gamma(s))|^{3/2}|g(\gamma(s'))|^{3/2} |s-s'|^{-1/2} ds ds'.
\end{align*}
Note that the implicit constant blows up as $\lambda\downarrow 0^+$, and this is why we assume that $\Gamma$ has everywhere positive curvature. 

For $0<\alpha<1$, consider the bilinear form:

$$B_\alpha(F,G):=\iint_{\R^2} F(x) G(x') |x-x'|^{-\alpha} dxdx'.$$
The case $\alpha=1/2$ is related to the preceding discussion. In fact, setting $F:=|f\circ\gamma|^{3/2}$ and $G:=|g\circ\gamma|^{3/2}$, we already know that
\begin{equation}\label{CSjo}
\|{f\sigma}\ast{g\sigma}\|_{3/2}^{3/2}\lesssim B_{1/2}(F,G).
\end{equation}

\noindent For $0<\alpha<1$ and $p=2/(2-\alpha)$, the Hardy-Littlewood-Sobolev inequality  implies that $|B_\alpha(F,F)|\lesssim_p \|F\|_{L^p(\R^d)}^2$. 
In particular, estimate \eqref{CSjo} combines with the $L^{4/3}$ bound for $B_{1/2}$ to yield the Tomas-Stein inequality \eqref{TS}: 
$$\|\widehat{f\sigma}\|_6=\|(\widehat{f\sigma})^2\|_3^{1/2}\lesssim\|f\sigma\ast f\sigma\|_{3/2}^{1/2}\lesssim B_{1/2}(F,F)^{1/3}\lesssim \|F\|_{4/3}^{2/3}=\|f\|_{L^2(\sigma)}.$$

Following previous work from \cite{oneil} and \cite{MVV}, Quilodr\'an proved in \cite[Proposition 4.5]{Q1} that, for the same range of $\alpha$ and value of $p$, there exists a  constant $\beta>0$ such that
\begin{equation}\label{Bbounds}
|B_\alpha(F,F)|\lesssim \|F\|_p^{2-\beta}\sup_I\Big(|I|^{-1+1/p}\int_I |F|\Big)^\beta
\end{equation}
for every $F\in L^p(\R)$. Here the supremum ranges over all compact intervals $I$ of $\R$.
If instead of the $L^{4/3}$ bound for $B_{1/2}$ we use the more refined estimate \eqref{Bbounds}, then reasoning in a similar way as before leads to the following improved estimate:

\begin{proposition}\label{capestimate} {\em (Cap estimate)} There exists $C<\infty$ 
and $\beta>0$ such that for every $f\in L^2(\sigma)$, the following estimate holds:
\begin{equation}\label{capestimateinequality}
\|\widehat{f\sigma}\|_{L^6(\R^2)}\leq C \|f\|_{L^2(\sigma)}^{1-\beta/2}\sup_{\C\subset\Gamma}\Big(|\C|^{-1/4}\int_\C|f|^{3/2}d\sigma\Big)^{\beta/3}.
\end{equation}
\end{proposition}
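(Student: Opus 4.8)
The plan is to repeat verbatim the Carleson--Sj\"olin computation already performed in this section, stopping one step short of the Tomas--Stein bound and then invoking the refined bilinear estimate \eqref{Bbounds} in place of the crude $L^{4/3}$ bound for $B_{1/2}$. Concretely, set $F:=|f\circ\gamma|^{3/2}$, viewed as a nonnegative function on $\R$ supported in $[0,\ell]$. Exactly as in the derivation of \eqref{TS}, Hausdorff--Young together with \eqref{firstCS}--\eqref{CSjo} gives
\[
\|\widehat{f\sigma}\|_{L^6(\R^2)}=\|(\widehat{f\sigma})^2\|_3^{1/2}\lesssim\|f\sigma\ast f\sigma\|_{3/2}^{1/2}\lesssim B_{1/2}(F,F)^{1/3}.
\]

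Next I apply \eqref{Bbounds} with $\alpha=1/2$, hence $p=2/(2-\alpha)=4/3$, to the nonnegative function $F$:
\[
B_{1/2}(F,F)\lesssim\|F\|_{4/3}^{2-\beta}\,\sup_I\Big(|I|^{-1/4}\int_I F\Big)^{\beta},
\]
where $I$ ranges over compact intervals of $\R$. Since $\|F\|_{4/3}^{4/3}=\int_0^\ell|f\circ\gamma|^{2}\,ds=\|f\|_{L^2(\sigma)}^{2}$, we have $\|F\|_{4/3}=\|f\|_{L^2(\sigma)}^{3/2}$; raising the previous display to the power $1/3$ thus contributes the factor $\|f\|_{L^2(\sigma)}^{(3/2)(2-\beta)/3}=\|f\|_{L^2(\sigma)}^{1-\beta/2}$, precisely the power appearing in \eqref{capestimateinequality}.

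It remains to pass from the supremum over intervals to the supremum over caps. As $F$ is supported in $[0,\ell]$ and $\gamma$ is the arclength parametrization, for a compact interval $I$ with $|I|<\ell$ the set $I\cap[0,\ell]$ is an interval $(a,b)$ with $b-a\le|I|$, and $\C:=\{\gamma(s'): a<s'<b\}$ is a cap with $|\C|=b-a$ and $\int_\C|f|^{3/2}\,d\sigma=\int_a^b F$; consequently $|I|^{-1/4}\int_I F\le(b-a)^{-1/4}\int_\C|f|^{3/2}\,d\sigma$. If instead $|I|\ge\ell$, then $|I|^{-1/4}\int_I F\le\ell^{-1/4}\int_\Gamma|f|^{3/2}\,d\sigma=|\Gamma|^{-1/4}\int_\Gamma|f|^{3/2}\,d\sigma$, and $\Gamma$ is itself a cap (e.g.\ $\Gamma=\C(0,2\ell)$). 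In either case the quantity is bounded by $\sup_{\C\subset\Gamma}|\C|^{-1/4}\int_\C|f|^{3/2}\,d\sigma$; combining this with the two displays above yields \eqref{capestimateinequality}.

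No essential difficulty is expected: the entire analytic substance is contained in the cited inequality \eqref{Bbounds}, and what remains is purely the arithmetic of H\"older exponents together with the elementary dictionary between intervals in the parameter space $[0,\ell]$ and caps on $\Gamma$. The only points worth a moment's attention are the mild edge cases in that dictionary (intervals protruding beyond $[0,\ell]$, and the degenerate cap $\C=\Gamma$), and the bookkeeping check that \eqref{Bbounds} is being applied to the genuinely nonnegative function $F=|f\circ\gamma|^{3/2}$, so that its ``local'' factor is indeed controlled by the cap quantity in \eqref{capestimateinequality}.
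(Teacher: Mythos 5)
Your argument is exactly the paper's proof: set $F=|f\circ\gamma|^{3/2}$, chain \eqref{CSjo} with the refined bilinear bound \eqref{Bbounds} at $\alpha=1/2$, $p=4/3$, and identify $\|F\|_{4/3}=\|f\|_{L^2(\sigma)}^{3/2}$ so that the exponent $1-\beta/2$ comes out correctly. The only difference is that you spell out the interval-to-cap dictionary and its edge cases, which the paper treats as immediate; this is correct and complete.
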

\begin{proof}
Set, as before, $F(s):=|f(\gamma(s))|^{3/2}$. Then \eqref{CSjo} and \eqref{Bbounds} imply:
\begin{align*}
\|\widehat{f\sigma}\|_6&\lesssim\|f\sigma\ast f\sigma\|_{3/2}^{1/2}\lesssim B_{1/2}(F,F)^{1/3}
\lesssim\|F\|_{4/3}^{2/3-\beta/3}\sup_I\Big(|I|^{-1/4}\int_I |F(s)|ds\Big)^{\beta/3}\\
=&\|f\|_{L^2(\sigma)}^{1-\beta/2}\sup_{\C}\Big(|\C|^{-1/4}\int_\C|f|^{3/2}d\sigma\Big)^{\beta/3},
\end{align*}
as desired.
\end{proof}

\section{The decomposition algorithm}\label{sec:The decomposition algorithm}

The cap estimate \eqref{capestimateinequality} is the only ingredient we need to prove the analog of  \cite[{Lemma 2.6}]{CS}, which establishes a weak connection between functions satisfying modest lower bounds $\|\widehat{f \sigma}\|_6\gtrsim\delta\|f\|_2$ and characteristic functions of caps:

\begin{lemma}\label{decomposef}
For any $\delta>0$ there exist $C_\delta<\infty$ and $\eta_\delta>0$ with the following property: if $f\in L^2(\sigma)$ satisfies $\|\widehat{f\sigma}\|_6\geq\delta {\mathbf C}[\Gamma]\|f\|_{L^2(\sigma)}$, then there exists a decomposition $f=g+h$ and a cap $\C\subset\Gamma$ satisfying

\begin{align}
&0\leq |g|, |h|\leq |f|,\label{c1}\\
&g,h\textrm{ have disjoint supports},\label{c2}\\
&|g(\gamma(s))|\leq C_\delta \|f\|_{L^2(\sigma)} |\C|^{-1/2}\chi_\C(\gamma(s)), \textrm{ for all } s\in [0,\ell],\label{c3}\\
&\|g\|_{L^2(\sigma)}\geq\eta_\delta \|f\|_{L^2(\sigma)}\label{c4}.
\end{align}
\end{lemma}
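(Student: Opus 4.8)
The plan is to use the cap estimate from Proposition \ref{capestimate} to locate a single cap $\C$ which captures a definite proportion of the relevant quantity, and then to manufacture $g$ by truncating $f$ both to that cap and to the region where $|f|$ is not too large relative to its $L^2$ average on $\C$. Concretely, write $F := |f\circ\gamma|^{3/2}$ and normalize $\|f\|_{L^2(\sigma)} = 1$. Inequality \eqref{capestimateinequality} together with the hypothesis $\|\widehat{f\sigma}\|_6 \geq \delta\,{\mathbf C}[\Gamma]$ gives a lower bound
$$\sup_{\C\subset\Gamma}\Big(|\C|^{-1/4}\int_\C |f|^{3/2}d\sigma\Big)^{\beta/3} \gtrsim_\delta 1,$$
so after choosing a near-optimal cap $\C = \C(s_0,r)$ we have $|\C|^{-1/4}\int_\C |f|^{3/2}d\sigma \geq c_\delta$ for an explicit $c_\delta>0$. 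Equivalently $\int_\C |f|^{3/2}d\sigma \geq c_\delta |\C|^{1/4}$. This is the first step and it is essentially immediate from the cap estimate.

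Next I would define, for a large threshold $M = M_\delta$ to be chosen, the set $E := \{s\in[0,\ell] : \gamma(s)\in\C \text{ and } |f(\gamma(s))| \leq M |\C|^{-1/2}\}$, and set $g := f\chi_E$, $h := f - g = f\chi_{E^c}$. Then \eqref{c1} and \eqref{c2} hold by construction, and \eqref{c3} holds with $C_\delta = M_\delta$ since on $E$ we have $|g(\gamma(s))| \leq M|\C|^{-1/2}\chi_\C(\gamma(s))$ (recall $\|f\|_{L^2(\sigma)}=1$). It remains to verify the lower bound \eqref{c4}, i.e. that not too much of the $L^{3/2}$ mass of $f$ on $\C$ lives in the bad set $\{s\in\C : |f(\gamma(s))| > M|\C|^{-1/2}\}$. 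The key observation is that on this bad set, $|f|^{3/2} = |f|^2\cdot |f|^{-1/2} \leq |f|^2\cdot (M|\C|^{-1/2})^{-1/2} = M^{-1/2}|\C|^{1/4}|f|^2$, so
$$\int_{\{s\in\C:\,|f(\gamma(s))|>M|\C|^{-1/2}\}} |f|^{3/2}d\sigma \leq M^{-1/2}|\C|^{1/4}\int_\Gamma |f|^2 d\sigma = M^{-1/2}|\C|^{1/4}.$$
Choosing $M = M_\delta$ so large that $M^{-1/2} < c_\delta/2$, we conclude that $\int_E |f|^{3/2}d\sigma \geq (c_\delta/2)|\C|^{1/4}$, where I continue to abuse notation by writing $\int_E$ for the integral over $\gamma^{-1}(\C)\cap E$.

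Finally I would convert this lower bound on $\int_E |f|^{3/2}d\sigma$ into the desired $L^2$ lower bound \eqref{c4} for $g = f\chi_E$. Apply Hölder's inequality on $E$ with exponents $4/3$ and $4$: since $|\C| = |\gamma^{-1}(\C)|$ and $E\subset\gamma^{-1}(\C)$,
$$\int_E |f|^{3/2}d\sigma \leq \Big(\int_E |f|^2 d\sigma\Big)^{3/4} |E|^{1/4} \leq \|g\|_{L^2(\sigma)}^{3/2}\,|\C|^{1/4}.$$
Combining with the lower bound from the previous step, $(c_\delta/2)|\C|^{1/4} \leq \|g\|_{L^2(\sigma)}^{3/2}|\C|^{1/4}$, so $\|g\|_{L^2(\sigma)} \geq (c_\delta/2)^{2/3} =: \eta_\delta$, which is \eqref{c4}. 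Undoing the normalization gives the general statement. I expect the only genuinely delicate point to be bookkeeping the passage between the cap $\C\subset\Gamma$ and its arclength preimage $\gamma^{-1}(\C)\subset[0,\ell]$, together with keeping track of how the constants $c_\delta$, $M_\delta$, $C_\delta = M_\delta$, and $\eta_\delta$ depend on $\delta$ through ${\mathbf C}[\Gamma]$ and the implicit constant in \eqref{capestimateinequality}; the analytic content is just the two applications of Hölder's inequality above.
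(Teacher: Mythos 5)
Your proof is correct and follows essentially the same route as the paper's: locate a cap via the refined estimate \eqref{capestimateinequality}, truncate $f$ on that cap at height comparable to $|\C|^{-1/2}$ (your $M_\delta|\C|^{-1/2}$ is the paper's $R$ with $R^{-1/2}=\tfrac14 c(\delta)|\C|^{1/4}$), bound the discarded $L^{3/2}$ mass by Chebyshev/H\"older against $\|f\|_2^2$, and recover \eqref{c4} by H\"older on the cap. The constants and exponents all check out.
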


\begin{proof}
The proof is analogous to the one of \cite[Lemma 2.6]{CS} but we reproduce it here for the convenience of the reader. We can, without loss of generality, normalize so that $\|f\|_{L^2(\sigma)}=1$. By Proposition \ref{capestimate} there exists a cap $\C$ such that
$$\int_\C |f|^{3/2}d\sigma\geq \frac{1}{2}c(\delta)|\C|^{1/4}.$$

\noindent Here $c(\delta)=c_0\cdot\delta^{3/\beta}$ for some absolute constant $c_0>0$ whose exact value is not important for the analysis.
Let $R\geq 1$, and define $E:=\{\gamma(s)\in\C: |f(\gamma(s))|\leq R\}$. Set $g=f\chi_E$ and $h=f-f\chi_E$. Then $g$ and $h$ have disjoint supports, $g+h=f$, $g$ is supported on $\C$, and $\|g\|_\infty\leq R$. Since $|h(\gamma(s))|\geq R$ for almost every $\gamma(s)\in \C$ for which $h(\gamma(s))\neq 0$, we have 
$$\int_C |h|^{3/2}d\sigma\leq R^{-1/2}\int_\C |h|^2 d\sigma\leq R^{-1/2}\|f\|_2^2=R^{-1/2}.$$
Define $R$ by $R^{-1/2}=\frac{1}{4}c(\delta)|\C|^{1/4}.$ Then
$$\int_\C |g|^{3/2}d\sigma=\int_\C |f|^{3/2}d\sigma-\int_\C |h|^{3/2}d\sigma\geq \frac{1}{4}c(\delta)|\C|^{1/4}.$$
By H\"{o}lder's inequality, since $g$ is supported on $\C$,
\begin{equation*}
\|g\|_2\geq|\C|^{-1/6}\Big(\int_\C |g|^{3/2}d\sigma\Big)^{2/3}\geq c'(\delta)=c'(\delta)\|f\|_2>0.\qedhere
\end{equation*}
\end{proof}

\noindent Conditions \ref{c3} and \ref{c4} easily imply a lower bound on the $L^1$ norm of $g$:

\begin{lemma}\label{L1lowerbound}
Let $g\in L^2(\sigma)$ satisfy $|g(x)|\leq a |\C|^{-1/2}\chi_\C(x)$ and $\|g\|_2\geq b$ for some $a,b>0$ and $\C\subset\Gamma$. Then there exists a constant $C=C(a,b)>0$ such that
$$\|g\|_{L^1(\sigma)}\geq C|\C|^{1/2}.$$ 
\end{lemma}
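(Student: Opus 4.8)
The plan is to use the two hypotheses in tandem: the pointwise upper bound $|g|\le a|\C|^{-1/2}\chi_\C$ controls how large $|g|$ can be, while the $L^2$ lower bound $\|g\|_2\ge b$ forces $|g|$ to be nonnegligible on a set of substantial measure. First I would write
$$b^2\le\|g\|_2^2=\int_\C|g|^2\,d\sigma\le \|g\|_\infty\int_\C|g|\,d\sigma\le a|\C|^{-1/2}\|g\|_{L^1(\sigma)},$$
where I have used the pointwise bound $\|g\|_\infty\le a|\C|^{-1/2}$ in the last step together with the fact that $g$ is supported on $\C$. Rearranging immediately gives $\|g\|_{L^1(\sigma)}\ge (b^2/a)\,|\C|^{1/2}$, so one may take $C(a,b)=b^2/a$.

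There is essentially no obstacle here; the only thing to be careful about is that the inequality $\int_\C|g|^2\le\|g\|_\infty\int_\C|g|$ is just the trivial bound $|g|^2\le\|g\|_\infty|g|$ integrated over $\C$, and that $\|g\|_\infty\le a|\C|^{-1/2}$ is exactly the content of the hypothesis $|g(x)|\le a|\C|^{-1/2}\chi_\C(x)$. Alternatively, one could phrase the same computation via Hölder's inequality as $\|g\|_2^2=\||g|^{1/2}\cdot|g|^{3/2}\|_1\le \|g\|_\infty\|g\|_1$, but the direct pointwise comparison is cleanest.

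Thus the proof is a one-line estimate, and the constant is explicit: $\|g\|_{L^1(\sigma)}\ge \dfrac{b^2}{a}|\C|^{1/2}$. The only conceptual point worth flagging for the reader is the role of the normalization $|\C|^{-1/2}$ in the hypothesis: it is precisely the power that makes the $L^1$ and $L^2$ bounds scale consistently, so that the resulting lower bound on $\|g\|_{L^1(\sigma)}$ carries the natural factor $|\C|^{1/2}$ (which would be $\|\chi_\C\|_{L^1(\sigma)}^{1/2}$-type behavior), matching the way caps of different sizes are compared elsewhere in the argument.
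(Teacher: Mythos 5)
Your proof is correct and is essentially identical to the paper's: both rest on the pointwise bound $|g|^2\le a|\C|^{-1/2}|g|$ on $\C$ (equivalently $|g|\ge a^{-1}|\C|^{1/2}|g|^2$), integrated and combined with $\|g\|_2\ge b$. You have merely written the chain of inequalities in the reverse direction; the constant $C=b^2/a$ is the same.
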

\begin{proof}
Estimate:
\begin{equation*}
\|g\|_{L^1(\sigma)}=\int_\C |g|d\sigma\geq a^{-1}|\C|^{1/2}\|g\|_{L^2(\sigma)}^2\geq a^{-1}b^2 |\C|^{1/2}.\qedhere
\end{equation*}
\end{proof}
\vspace{.5cm}

In what follows we will restrict our attention to {\em nonnegative} functions.\footnote{For much of the analysis this makes no difference, but nonnegativity will play a crucial role in \S \ref{sec:A concentration compactness result} when we establish precompactness of uniformly integrable extremizing sequences; see the proof of Lemma \ref{uiimpliesprecompact}.} Indeed, by Plancherel's theorem, inequality \eqref{TS} is equivalent to
\begin{equation}\label{convTS} 
\|f\sigma\ast f\sigma\ast f\sigma\|_{L^2(\R^2)}\leq \frac{{\mathbf C}[\Gamma]^3}{2\pi} \|f\|_{L^2(\sigma)}^3.
\end{equation}
The pointwise inequality $|f\sigma\ast f\sigma\ast f\sigma|\leq |f|\sigma\ast |f|\sigma\ast |f|\sigma$ then implies that, if $f$ is an extremizer for inequality \eqref{TS},  so if $|f|$; similarly, if $\{f_n\}$ is an extremizing sequence,  so is $\{|f_n|\}$.

A decomposition algorithm analogous to the one from \cite[Step 6A]{CS} may be applied to any given  nonnegative $f\in L^2(\sigma)$. We describe it precisely:

\begin{proof}[Decomposition algorithm]
Initialize by setting $G_0=f$ and $\epsilon_0=1/2.$

Step $n$: The inputs for step $n$ are a nonnegative function $G_n\in L^2(\sigma)$ and a positive number $\epsilon_n$. Its outputs are functions $f_n, G_{n+1}$ and nonnegative numbers $\epsilon_n^\star,\epsilon_{n+1}$. 

If $\|G_{n}\sigma\ast G_{n}\sigma\ast G_{n}\sigma\|_2=0$, then $G_n=0$ almost everywhere. The algorithm then terminates, and we define $\epsilon_n^\star=0$, $f_n=0$, and $G_m=f_m=0$, $\epsilon_m=0$ for all $m>n$.

If $0<\|G_{n}\sigma\ast G_{n}\sigma\ast G_{n}\sigma\|_2<\epsilon_n^3 (2\pi)^{-1} {\mathbf C}[\Gamma]^3 \|f\|_2^3$, then replace $\epsilon_n$ by $\epsilon_n/2$; repeat until the first time that $\|G_{n}\sigma\ast G_{n}\sigma\ast G_{n}\sigma\|_2\geq \epsilon_n^3 (2\pi)^{-1} {\mathbf C}[\Gamma]^3 \|f\|_2^3$. Define $\epsilon_n^\star$ to be this value of $\epsilon_n$. Then

\begin{equation}\label{algstep}
(\epsilon_n^\star)^3 \frac{{\mathbf C}[\Gamma]^3}{2\pi}  \|f\|_2^3\leq \|G_{n}\sigma\ast G_{n}\sigma\ast G_{n}\sigma\|_2\leq 8(\epsilon_n^\star)^3 \frac{{\mathbf C}[\Gamma]^3}{2\pi} \|f\|_2^3.
\end{equation}

Apply Lemma \ref{decomposef} to obtain a cap $\C_n$ and a decomposition $G_n=f_n+G_{n+1}$
with disjointly supported nonnegative summands satisfying $f_n\leq C_n \|f\|_2 |\C_n|^{-1/2}\chi_{\C_n}$ and $\|f_n\|_2\geq\eta_n \|f\|_2$. Here, $C_n,\eta_n$ are bounded above and below, respectively, by quantities which depend only on $\|G_{n}\sigma\ast G_{n}\sigma\ast G_{n}\sigma\|_2^{1/3}/\|G_n\|_2\gtrsim \epsilon_n^\star$. Define $\epsilon_{n+1}=\epsilon_n^\star$, and move on to step $n+1$.
\end{proof}

\noindent The following exact analogs of  \cite[Lemmas 8.1, 8.3, 8.4]{CS} hold:
\begin{lemma}\label{decalg1}
Let $f\in L^2(\sigma)$ be a nonnegative function with positive norm. If the decomposition algorithm never terminates for $f$, then $\epsilon_n^\star\rightarrow 0$ as $n\rightarrow\infty$, and $\sum_{n=0}^N f_n\rightarrow f$ in $L^2(\sigma)$ as $N\rightarrow\infty$.  
\end{lemma}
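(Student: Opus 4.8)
The plan is to follow the structure of the proof of \cite[Lemma 8.1]{CS}, adapting it to the present setting where the basic convolution estimate is \eqref{convTS} rather than its analogue on the sphere. I will first show that $\epsilon_n^\star\to 0$, and then deduce the $L^2$ convergence $\sum_{n=0}^N f_n\to f$ as a fairly direct consequence.

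\textbf{Step 1: the stopping parameters tend to zero.} By construction the $\epsilon_n$ are nonincreasing (since $\epsilon_{n+1}=\epsilon_n^\star\le\epsilon_n$), so $\epsilon_n^\star$ converges to some limit $\epsilon_\infty\ge 0$; the claim is that $\epsilon_\infty=0$. The key point is that the pieces $f_n$ live on \emph{disjoint} caps $\C_n$ and each one carries a definite share of the mass concentrated on its cap. More precisely, at step $n$ Lemma \ref{decomposef} produces $f_n$ with $\|f_n\|_2\ge\eta_n\|f\|_2$, where $\eta_n$ is bounded below by a positive quantity depending only on a lower bound for $\|G_n\sigma\ast G_n\sigma\ast G_n\sigma\|_2^{1/3}/\|G_n\|_2\gtrsim\epsilon_n^\star$. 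If $\epsilon_\infty>0$, then $\eta_n\ge\eta_\infty>0$ for all $n$, so $\|f_n\|_2\ge\eta_\infty\|f\|_2$ for every $n$. But the $f_n$ are disjointly supported and $\sum_n\|f_n\|_2^2\le\|f\|_2^2<\infty$ since $0\le f_n\le G_n\le f$ with the $f_n$ mutually disjoint; this forces $\|f_n\|_2\to 0$, a contradiction. Hence $\epsilon_n^\star\to 0$.

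\textbf{Step 2: the partial sums converge to $f$ in $L^2$.} Write $f=\sum_{n=0}^{N}f_n+G_{N+1}$, which holds pointwise a.e.\ by the construction ($G_n=f_n+G_{n+1}$ and all summands nonnegative and disjoint). Since the $f_n$ are disjointly supported and dominated by $f$, we have $\sum_{n\ge 0}\|f_n\|_2^2<\infty$, so $\sum_{n=0}^{N}f_n$ is Cauchy and converges in $L^2(\sigma)$ to some $\widetilde f\le f$; equivalently $G_{N+1}=f-\sum_{n=0}^N f_n\to f-\widetilde f=:G_\infty$ in $L^2(\sigma)$, with $0\le G_\infty\le f$. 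It remains to show $G_\infty=0$. By the continuity of the trilinear convolution form on $L^2(\sigma)\times L^2(\sigma)\times L^2(\sigma)$ (a consequence of \eqref{convTS}), $\|G_{N+1}\sigma\ast G_{N+1}\sigma\ast G_{N+1}\sigma\|_2\to\|G_\infty\sigma\ast G_\infty\sigma\ast G_\infty\sigma\|_2$. On the other hand, from the left inequality in \eqref{algstep} applied at step $N+1$ together with the monotonicity $\epsilon_{N+1}\le\epsilon_N^\star$, we get
\begin{equation*}
\|G_{N+1}\sigma\ast G_{N+1}\sigma\ast G_{N+1}\sigma\|_2\le 8(\epsilon_{N+1}^\star)^3\frac{{\mathbf C}[\Gamma]^3}{2\pi}\|f\|_2^3\le 8(\epsilon_N^\star)^3\frac{{\mathbf C}[\Gamma]^3}{2\pi}\|f\|_2^3\longrightarrow 0
\end{equation*}
by Step 1. (If the algorithm terminates at some finite stage the conclusion is immediate, since then $G_{N+1}=0$ for all large $N$.) Therefore $\|G_\infty\sigma\ast G_\infty\sigma\ast G_\infty\sigma\|_2=0$, which by \eqref{convTS}--\eqref{TS} (the constant ${\mathbf C}[\Gamma]$ being finite and positive) forces $\widehat{G_\infty\sigma}=0$ and hence $G_\infty=0$ in $L^2(\sigma)$. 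This gives $\sum_{n=0}^N f_n\to f$ in $L^2(\sigma)$, as claimed.

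\textbf{Main obstacle.} The only genuinely delicate point is making Step 1 airtight: one must be sure that the lower bound $\eta_n$ in Lemma \ref{decomposef} really depends only on a lower bound for $\epsilon_n^\star$ (i.e.\ on $\delta$ in that lemma), uniformly in $n$ and in the particular function $G_n$, so that $\epsilon_\infty>0$ yields a uniform lower bound $\|f_n\|_2\ge\eta_\infty\|f\|_2$ incompatible with $\ell^2$-summability of the disjoint pieces. This traces back to the explicit dependence $c(\delta)=c_0\delta^{3/\beta}$ recorded in the proof of Lemma \ref{decomposef}, and to the fact that the ratio $\|G_n\sigma\ast G_n\sigma\ast G_n\sigma\|_2^{1/3}/\|G_n\|_2$ is bounded below by a fixed multiple of $\epsilon_n^\star$ via \eqref{algstep}; once this bookkeeping is in place the rest is routine. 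Everything else is an exact transcription of the argument in \cite{CS}, with the sphere's Tomas--Stein inequality replaced by \eqref{TS}/\eqref{convTS} for $\Gamma$.
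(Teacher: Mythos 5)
Your proof is correct and follows essentially the same route as the argument the paper defers to (Lemma~8.1 of \cite{CS}): monotonicity of $\epsilon_n^\star$ plus the uniform lower bound $\|f_n\|_2\ge\eta_n\|f\|_2$ from the algorithm forces $\epsilon_n^\star\to 0$ by $\ell^2$-summability of disjointly supported pieces, and then the right-hand inequality in \eqref{algstep} drives $\|G_{N+1}\sigma\ast G_{N+1}\sigma\ast G_{N+1}\sigma\|_2\to 0$, whence $G_\infty=0$. One small imprecision: the implication $\widehat{G_\infty\sigma}=0\Rightarrow G_\infty=0$ is not a consequence of \eqref{TS}/\eqref{convTS} (there is no reverse inequality); it is the injectivity of the Fourier transform on compactly supported finite measures ($\widehat{G_\infty\sigma}$ is real-analytic, so vanishing a.e.\ forces it to vanish identically, hence $G_\infty\sigma=0$). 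Also, the continuity step can be avoided entirely by noting that $G_{N+1}$ decreases pointwise to $G_\infty$, so $G_\infty\sigma\ast G_\infty\sigma\ast G_\infty\sigma\le G_{N+1}\sigma\ast G_{N+1}\sigma\ast G_{N+1}\sigma$ pointwise and the bound passes to the limit directly.
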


 The proof of Lemma \ref{decalg1} is identical to the corresponding one in \cite{CS} and therefore is omitted.

This decomposition is in general very inefficient. However, if $f$ nearly extremizes inequality \eqref{TS}, then more useful properties hold. Before we turn into these, let us recall a useful fact about near extremizers which already appeared in \cite[Lemma 9.2]{ThQ}:

\begin{lemma}\label{nearextremizers}
Let $f=g+h\in L^2(\sigma)$. Suppose that $g\perp h$, $g\neq 0$, and that $f$ is $\delta$-near extremizer for some $\delta\in (0,\frac{1}{4}]$. Then
\begin{equation}\label{firstprinciple}
\frac{\|h\|_2}{\|f\|_2}\leq C\max\Big(\frac{\|\widehat{h\sigma}\|_6}{\|h\|_2},\delta^{1/2}\Big).
\end{equation}
Here $C<\infty$ is a constant independent of $g$ and $h$.
\end{lemma}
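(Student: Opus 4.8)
The plan is to expand the trilinear form $\|f\sigma\ast f\sigma\ast f\sigma\|_2^2$, or rather the equivalent quantity $\|\widehat{f\sigma}\|_6^6 = \int (\widehat{f\sigma})^3\overline{(\widehat{f\sigma})^3}$, using the orthogonal decomposition $f=g+h$, and then isolate the ``pure'' contributions of $g$ and of $h$ from the cross terms. Writing $\widehat{f\sigma}=\widehat{g\sigma}+\widehat{h\sigma}$ and raising to the sixth power, we get $\widehat{f\sigma}$'s $L^6$ norm controlled below by $\|\widehat{g\sigma}\|_6$ and above by $\|\widehat{g\sigma}\|_6+\|\widehat{h\sigma}\|_6$. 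The near-extremality hypothesis says $\|\widehat{f\sigma}\|_6\geq(1-\delta)\mathbf{C}[\Gamma]\|f\|_2$, while Tomas--Stein gives $\|\widehat{g\sigma}\|_6\leq\mathbf{C}[\Gamma]\|g\|_2$ and $\|\widehat{h\sigma}\|_6\leq\mathbf{C}[\Gamma]\|h\|_2$. By orthogonality $\|f\|_2^2=\|g\|_2^2+\|h\|_2^2$, so the quantity $\|g\|_2/\|f\|_2$ and $\|h\|_2/\|f\|_2$ are complementary on the unit circle: set $t=\|h\|_2/\|f\|_2\in[0,1]$, so $\|g\|_2/\|f\|_2=\sqrt{1-t^2}$.

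The key step is a convexity/smallness estimate. From $\|\widehat{f\sigma}\|_6\leq\|\widehat{g\sigma}\|_6+\|\widehat{h\sigma}\|_6$ and the hypothesis,
$$(1-\delta)\mathbf{C}[\Gamma]\|f\|_2\leq\|\widehat{g\sigma}\|_6+\|\widehat{h\sigma}\|_6.$$
Normalizing $\|f\|_2=1$ and writing $a:=\|\widehat{g\sigma}\|_6/\mathbf{C}[\Gamma]\leq\sqrt{1-t^2}$ and using $\|\widehat{h\sigma}\|_6\leq\mathbf{C}[\Gamma]t$, this forces $\sqrt{1-t^2}+\mathbf{C}[\Gamma]^{-1}\|\widehat{h\sigma}\|_6\cdot\mathbf{C}[\Gamma]/\mathbf{C}[\Gamma]\geq 1-\delta$, i.e. roughly $\sqrt{1-t^2}\geq 1-\delta-\|\widehat{h\sigma}\|_6/\mathbf{C}[\Gamma]$. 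If $\|\widehat{h\sigma}\|_6/\|h\|_2$ is small compared to $\mathbf{C}[\Gamma]$, say $\|\widehat{h\sigma}\|_6\leq\varepsilon\mathbf{C}[\Gamma]\|h\|_2=\varepsilon\mathbf{C}[\Gamma]t$, then $\sqrt{1-t^2}\geq 1-\delta-\varepsilon t$, and since $\sqrt{1-t^2}\leq 1-t^2/2$ we get $t^2/2\leq\delta+\varepsilon t$, whence $t\lesssim\max(\sqrt\delta,\varepsilon)$, which is exactly \eqref{firstprinciple}. The slightly delicate point is that $\|\widehat{h\sigma}\|_6/\|h\|_2$ need not be small a priori; but in that regime $\varepsilon:=\|\widehat{h\sigma}\|_6/(\mathbf{C}[\Gamma]\|h\|_2)$ is itself the right-hand quantity, so one runs the same computation with this $\varepsilon$ and the bound $t\lesssim\max(\varepsilon,\sqrt\delta)$ emerges uniformly. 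Concretely, I would not split into cases: set $m:=\max(\|\widehat{h\sigma}\|_6/\|h\|_2,\delta^{1/2})$, deduce $\|\widehat{h\sigma}\|_6\leq m\|h\|_2=mt$ directly, plug in, and solve the resulting quadratic inequality $t^2/2\leq \delta+(m/\mathbf{C}[\Gamma])t$ to conclude $t/\|f\|_2\leq C m$ after absorbing the harmless constant $\mathbf{C}[\Gamma]$ (which depends only on $\Gamma$) — though one should double-check whether the intended constant $C$ in the statement is allowed to depend on $\mathbf{C}[\Gamma]$; reading \cite[Lemma 9.2]{ThQ} suggests it is, as $\mathbf{C}[\Gamma]$ is a fixed feature of the fixed curve.

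The main obstacle, such as it is, is bookkeeping the constants correctly through the quadratic inequality and making sure the triangle-inequality step $\|\widehat{f\sigma}\|_6\leq\|\widehat{g\sigma}\|_6+\|\widehat{h\sigma}\|_6$ is the right one to use rather than, say, an $L^6$-orthogonality expansion (it is: we only need an upper bound on $\|\widehat{f\sigma}\|_6$ here, and the crude triangle inequality suffices and avoids cross terms entirely). The hypothesis $\delta\leq 1/4$ is used precisely to keep $1-\delta$ bounded away from $0$ so that the quadratic $t^2/2 - (m/\mathbf{C}[\Gamma])t-\delta\leq 0$ has its relevant root of size $O(m)$; I would record that the estimate degrades as $\delta\to 1$, which is why the restriction is imposed. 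No compactness, no decomposition algorithm, and none of the geometry of $\Gamma$ beyond the validity of \eqref{TS} itself enters — this is a soft Hilbert-space-plus-triangle-inequality lemma.
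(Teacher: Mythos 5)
Your proposal is correct and follows essentially the same route as the paper: triangle inequality on $\widehat{f\sigma}=\widehat{g\sigma}+\widehat{h\sigma}$, the Tomas--Stein bound on the $g$-piece, Pythagoras from $g\perp h$, and a quadratic inequality in $\|h\|_2/\|f\|_2$. The only cosmetic differences are that the paper normalizes $\|g\|_2=1$ rather than $\|f\|_2=1$ and disposes of the regime $\|\widehat{h\sigma}\|_6>\tfrac12\mathbf{C}[\Gamma]\|h\|_2$ by a trivial case split instead of solving the quadratic uniformly; and yes, $C$ is allowed to depend on $\mathbf{C}[\Gamma]$ (the paper's own trivial case uses $C=2/\mathbf{C}[\Gamma]$).
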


\noindent The proof is almost identical to that of \cite[Lemma 7.1]{CS} but we reproduce it here for the convenience of the reader.

\begin{proof}
The inequality is invariant under multiplication of $f$ by a positive constant, so we may assume without loss of generality that $\|g\|_2=1$. We may assume that $\|h\|_2>0$, since otherwise the conclusion is trivial. Define $y=\|h\|_2$ and 
$$\eta=\frac{\|\widehat{h\sigma}\|_6}{{\mathbf C}[\Gamma] \|h\|_2}.$$ 
If $\eta>\frac{1}{2}$ then \eqref{firstprinciple} holds trivially with $C=2/{\mathbf C}[\Gamma]$, for the left-hand side cannot exceed 1 since $f=g+h$ with $g\perp h$.

We also have that
$$(1-\delta){\mathbf C}[\Gamma] \|f\|_2\leq \|\widehat{f\sigma}\|_6\leq \|\widehat{g\sigma}\|_6+\|\widehat{h\sigma}\|_6\leq{\mathbf C}[\Gamma] (1+\eta y).$$

\noindent Since $g\perp h$, $\|f\|_2^2=1+y^2$ and therefore
$$(1-\delta)(1+y^2)^{1/2}\leq 1+\eta y.$$

\noindent Squaring gives
$$(1-2\delta)(1+y^2)\leq 1+2\eta y+\eta^2 y^2.$$

\noindent Since $\delta\in (0,\frac{1}{4}]$ and $\eta\leq\frac{1}{2}$,
$$\frac{1}{2}y^2\leq 2\delta+2\eta y+\eta^2 y^2\leq 2\delta+2\eta y+\frac{1}{4}y^2$$
whence either $y^2\leq 16\delta$ or $y\leq 16\eta$.

Substituting the definitions of $y, \eta$, and majorizing $\|h\|_2/\|f\|_2$ by $\|h\|_2/\|g\|_2$, yields the stated conclusion.
\end{proof}

 Regardless of whether the decomposition algorithm terminates for $f$, the norms of $f_n, G_n$ enjoy upper bounds independent of $f$, for all but very large $n$:

\begin{lemma}\label{decalg3}
There exist a sequence of positive constants $\gamma_n{\rightarrow} 0$ and a function $N:(0,\frac{1}{2}]\rightarrow\N$ satisfying $N(\delta)\rightarrow\infty$ as $\delta\rightarrow 0$ with the following property: for any nonnegative $\delta$-near extremizer $f\in L^2(\sigma)$, the quantities $\epsilon_n^\star$ and the functions $f_n, G_n$ obtained when the decomposition algorithm is applied to $f$ satisfy
\begin{align}
&\epsilon_n^\star\leq\gamma_n\textrm{ for all }n\leq N(\delta),\label{ind1}\\
&\|G_n\|_2\leq\gamma_n \|f\|_2\textrm{ for all }n\leq N(\delta),\textrm{and}\label{ind2}\\
&\|f_n\|_2\leq\gamma_n \|f\|_2\textrm{ for all }n\leq N(\delta).\label{ind3}
\end{align}
\end{lemma}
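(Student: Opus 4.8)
The plan is to argue by contradiction, following the scheme of \cite[Lemma 8.3]{CS}, and to extract the sequence $\gamma_n$ and the threshold function $N(\delta)$ from a compactness/diagonalization argument rather than constructing them explicitly. Suppose the conclusion fails. Then there is an index $n_0$, a constant $c>0$, and a sequence $\delta_k\downarrow 0$ together with nonnegative $\delta_k$-near extremizers $f^{(k)}$ (normalized so $\|f^{(k)}\|_2=1$) such that when the decomposition algorithm is applied to $f^{(k)}$, at least one of $\epsilon_{n_0}^{\star}(f^{(k)})$, $\|G_{n_0}(f^{(k)})\|_2$, $\|f_{n_0}(f^{(k)})\|_2$ stays $\geq c$ for all $k$; by a diagonal argument we may assume $n_0$ is the \emph{smallest} such bad index, so that the algorithm behaves well through step $n_0-1$ uniformly in $k$. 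The key structural fact to exploit is that all the outputs through step $n_0-1$ — the caps $\C_0,\dots,\C_{n_0-1}$, the functions $f_0,\dots,f_{n_0-1}$, and $G_{n_0}=f-\sum_{j<n_0}f_j$ — are built from $O(1)$ applications of Lemma \ref{decomposef}, so their relevant parameters ($C_j$, $\eta_j$, and the lower bounds on $\epsilon_j^{\star}$) are controlled from above and below by absolute constants independent of $k$.

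Next I would bring in Lemma \ref{nearextremizers} to force $G_{n_0}$ to be small. Writing $f^{(k)}=\big(\sum_{j<n_0}f_j\big)+G_{n_0}$, these two pieces are orthogonal (the $f_j$ are disjointly supported and disjoint from $G_{n_0}$ by \eqref{c2}), and $\sum_{j<n_0}f_j\neq 0$ once $n_0\geq 1$. Lemma \ref{nearextremizers} then gives
$$\frac{\|G_{n_0}\|_2}{\|f^{(k)}\|_2}\leq C\max\Big(\frac{\|\widehat{G_{n_0}\sigma}\|_6}{\|G_{n_0}\|_2},\delta_k^{1/2}\Big).$$
The point is that the ratio $\|\widehat{G_{n_0}\sigma}\|_6/\|G_{n_0}\|_2$ is itself small: by \eqref{algstep} this ratio is comparable to $\epsilon_{n_0}^{\star}$, and the algorithm's halving mechanism together with the uniform control through step $n_0-1$ shows that $\epsilon_{n_0}^{\star}\leq\epsilon_{n_0-1}^{\star}=O(1)$ and, crucially, that a definite fraction of the $L^2$-mass and of the $\|\,\cdot\,\sigma\ast\cdot\sigma\ast\cdot\sigma\|_2$-mass has been removed at each prior step, so after $n_0$ steps the remaining $\epsilon^{\star}$ must have dropped below any prescribed threshold once $n_0$ is large — which is exactly how $N(\delta)$ enters: the halving forces $\epsilon_{n_0}^{\star}$ to decay geometrically in $n_0$, uniformly over all near-extremizers, up to the step where the process could in principle stall, and that stalling step tends to infinity as $\delta\to0$. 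Combining, $\|G_{n_0}\|_2\to 0$ as $k\to\infty$, hence also $\epsilon_{n_0}^{\star}\to0$ (again via \eqref{algstep}, since $\|G_{n_0}\sigma\ast G_{n_0}\sigma\ast G_{n_0}\sigma\|_2\leq (2\pi)^{-1}\mathbf{C}[\Gamma]^3\|G_{n_0}\|_2^3$), and since $0\leq f_{n_0}\leq G_{n_0}$ pointwise we get $\|f_{n_0}\|_2\leq\|G_{n_0}\|_2\to0$. All three quantities at index $n_0$ therefore tend to $0$, contradicting the assumption that one of them stays $\geq c$. Finally one sets $\gamma_n:=\sup$ of the possible values of these three quantities at step $n$ over all $\delta$-near extremizers with $\delta\leq 1/2$ that have not yet stalled, and reads off $\gamma_n\to0$ and the existence of $N(\delta)$ from the geometric decay just established.

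I expect the main obstacle to be making precise the bookkeeping that turns the ``at most $n_0$ halvings have occurred, and each step removed a definite fraction'' heuristic into honest uniform bounds — in particular verifying that $\epsilon_n^{\star}$ is genuinely decreasing in $n$ (so that the constants $C_n,\eta_n$ from Lemma \ref{decomposef} stay in a fixed compact range through step $N(\delta)$) and pinning down how $N(\delta)$ must be chosen so that the algorithm cannot have reached a near-trivial residual before step $N(\delta)$. This is precisely the content of \cite[Lemma 8.3]{CS}, so I would follow that argument essentially verbatim, substituting our Proposition \ref{capestimate}, Lemma \ref{decomposef}, and Lemma \ref{nearextremizers} for their analogues; the convexity/curvature hypotheses on $\Gamma$ play no role here beyond what is already encoded in the cap estimate.
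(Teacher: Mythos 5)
Your proposal assembles the right ingredients --- Lemma \ref{nearextremizers} applied to $h=G_n$, $g=f_0+\dots+f_{n-1}$, the lower bound $\|f_n\|_2\geq\rho(t)\|G_n\|_2$ coming from Lemma \ref{decomposef}, and pairwise orthogonality of the $f_m$ --- and your reductions among the three conclusions (that \eqref{ind2} implies \eqref{ind1} and \eqref{ind3}) are correct. But the contradiction scheme as you have set it up does not work. The negation of the lemma is misstated: fixing a single index $n_0$ and sending $\delta_k\downarrow 0$ cannot produce a contradiction, because the lemma only asserts $\|G_{n_0}\|_2\leq\gamma_{n_0}\|f\|_2$ for \emph{some} positive constant $\gamma_{n_0}$, and $\|G_n\|_2\leq\|f\|_2$ holds trivially; a genuine failure must occur along a sequence of indices tending to infinity. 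Worse, the convergence you aim to establish --- $\|G_{n_0}\|_2\to0$ as $k\to\infty$ at fixed $n_0$ --- is false: for $n_0=0$ one has $G_0=f$ and $\|G_0\|_2=1$ for every near extremizer. The smallness of $\|G_n\|_2$ is a phenomenon in $n$, uniform over near extremizers, not a phenomenon in $\delta$ at fixed $n$; your own phrase ``once $n_0$ is large'' betrays this, yet $n_0$ is frozen in your setup.

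The second problem is that you use Lemma \ref{nearextremizers} in the wrong direction: you try to show $\|\widehat{G_{n_0}\sigma}\|_6/\|G_{n_0}\|_2$ is \emph{small} in order to conclude $\|G_{n_0}\|_2$ is small, which is circular (that ratio being small is essentially the statement $\epsilon_{n_0}^\star$ is small, which is what needs proving). The paper runs the contrapositive: if $\|G_n\|_2>\gamma_n\|f\|_2$ and $n\leq N(\delta)$ (where $N(\delta)$ is defined as the largest $n$ with $\gamma_n\geq C_1\delta^{1/2}$), then the dichotomy from Lemma \ref{nearextremizers} forces $\|\widehat{G_n\sigma}\|_6\geq c_0\|G_n\|_2^2/\|f\|_2\geq c_0\gamma_n\|G_n\|_2$ to be \emph{large}, whence $\|f_m\|_2\geq\gamma_n\rho(c_0\gamma_n)\|f\|_2$ for every $m\leq n$, and orthogonality of the $f_m$ then bounds $n+1$ by $(\gamma_n\rho(c_0\gamma_n))^{-2}$, contradicting a choice of $\gamma_n$ made in advance to decay slowly enough. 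Note finally that there is no ``geometric decay'' anywhere: the halving mechanism only makes $\epsilon_n^\star$ nonincreasing, and the admissible rate for $\gamma_n$ is dictated by the function $\rho$ (hence ultimately by the exponent $\beta$ in the cap estimate), which may force $\gamma_n$ to decay very slowly.
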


\begin{proof} By \eqref{convTS} and \eqref{algstep},
$$\frac{{\mathbf C}[\Gamma]^3}{2\pi} \|G_n\|_2^3\geq \|G_n\sigma\ast G_n\sigma\ast G_n\sigma\|_2\geq (\epsilon_n^\star)^3\frac{{\mathbf C}[\Gamma]^3}{2\pi}\|f\|_2^3=\Big(\frac{(\epsilon_n^\star)^3\|f\|_2^3}{\|G_n\|_2^3}\Big)\cdot\frac{{\mathbf C}[\Gamma]^3}{2\pi}\|G_n\|_2^3,$$
so $\epsilon_n^\star\leq \|G_n\|_2/\|f\|_2$. Thus the second conclusion implies the first. Since $\|f_n\|_2\leq\|G_n\|_2$, it also implies the third. 

We recall two facts. Firstly, Lemma \ref{nearextremizers}  applied to $h=G_n$ and $g=f_0+\ldots+f_{n-1}$ asserts that there are constants $c_0,C_1\in\R^+$ such that whenever $f\in L^2$ is a $\delta$-near extremizer, either $\|\widehat{G_n\sigma}\|_6\geq c_0 \|G_n\|_2^2 \|f\|_2^{-1}$, or $\|G_n\|_2\leq C_1\delta^{1/2}\|f\|_2$.  Secondly, according to Lemma \ref{decomposef}, there exists a nondecreasing function $\rho:(0,\infty)\rightarrow (0,\infty)$ satisfying $\rho(t)\rightarrow 0$ as $t\rightarrow 0$ such that for every nonzero $f\in L^2$ and any $n$, if $\|\widehat{G_n\sigma}\|_6\geq t\|G_n\|_2$, then $\|f_n\|_2\geq \rho(t)\|G_n\|_2$.

Choose a sequence $\{\gamma_n\}$ of positive numbers which tends monotonically to zero, but does so sufficiently slowly to satisfy
$$(n+1)\gamma_n\rho(c_0\gamma_n)>1\textrm{ for all }n.$$
Define $N(\delta)$ to be the largest integer satisfying 
$$\gamma_{N(\delta)}\geq C_1\delta^{1/2}.$$
Note that $N(\delta)\rightarrow\infty$ as $\delta\rightarrow 0$ because $\gamma_n> 0$ for all $n$.

Let $f,\delta$ be given. Suppose that $n\leq N(\delta)$. Aiming at a contradiction, suppose that $\|G_n\|_2>\gamma_n \|f\|_2$. Then by definition of $N(\delta)$, $\|G_n\|_2>C_1\delta^{1/2}\|f\|_2.$
By the above dichotomy,
$$\|\widehat{G_n\sigma}\|_6\geq c_0 \|G_n\|_2^2 \|f\|_2^{-1}\geq c_0\gamma_n \|G_n\|_2.$$

By the second fact reviewed above, 
$$\|f_n\|_2\geq \rho(c_0\gamma_n)\|G_n\|_2\geq \gamma_n\rho(c_0\gamma_n)\|f\|_2.$$
Since $\|G_m\|_2\geq \|G_n\|_2$ for every $m\leq n$, the same lower bound follows for $\|f_m\|_2$ for every $m\leq n$. 
Since the functions $f_m$ are pairwise orthogonal, $\sum_{m\leq n} \|f_m\|_2^2\leq \|f\|_2^2$, and consequently $(n+1)\gamma_n\rho(c_0\gamma_n)\leq 1$, a contradiction.
\end{proof}

\noindent The following lemma is a direct consequence of the decomposition algorithm coupled with Lemma \ref{decomposef}:

\begin{lemma}\label{decalg4}
For every $\epsilon>0$ there exist $\delta_\epsilon>0$ and $C_\epsilon<\infty$ such that, if $f\in L^2(\sigma)$ is a nonnegative $\delta_\epsilon$-near extremizer, then the functions $f_n, G_n$ associated to $f$ by the decomposition algorithm satisfy, for every $n\in\N$,
\begin{itemize}
\item[(i)] If $\|G_n\|_2\geq\epsilon\|f\|_2$ then there exists a cap $\C_n\subset\Gamma$ such that
$$f_n\leq C_\epsilon \|f\|_2 |\C_n|^{-1/2}\chi_{\C_n}.$$
\item[(ii)] If $\|G_n\|_2\geq\epsilon\|f\|_2$, then $\|f_n\|_2\geq\delta_\epsilon \|f\|_2$.
\end{itemize}
\end{lemma}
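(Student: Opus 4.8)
The plan is to derive Lemma \ref{decalg4} as an immediate consequence of the two facts already isolated in the proof of Lemma \ref{decalg3}, together with the observation that the decomposition algorithm produces summands obeying the conclusions of Lemma \ref{decomposef} as soon as the relevant lower bound $\|\widehat{G_n\sigma}\|_6 \gtrsim \epsilon \|G_n\|_2$ holds. First I would unwind the dichotomy: applying Lemma \ref{nearextremizers} with $h = G_n$ and $g = f_0 + \ldots + f_{n-1}$ (noting $g \perp h$ by the disjoint-support property \eqref{c2}, and that $g \neq 0$ for $n \geq 1$ while $n = 0$ is trivial since $G_0 = f$), we obtain constants $c_0, C_1 > 0$ such that any nonnegative $\delta$-near extremizer $f$ satisfies, for each $n$, either $\|\widehat{G_n\sigma}\|_6 \geq c_0 \|G_n\|_2^2 \|f\|_2^{-1}$ or $\|G_n\|_2 \leq C_1 \delta^{1/2} \|f\|_2$.

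Next I would choose the parameters. Given $\epsilon > 0$, define $\delta_\epsilon$ small enough that $C_1 \delta_\epsilon^{1/2} < \epsilon$; this forces the second alternative of the dichotomy to fail whenever $\|G_n\|_2 \geq \epsilon \|f\|_2$, so the first alternative must hold, giving $\|\widehat{G_n\sigma}\|_6 \geq c_0 \|G_n\|_2^2\|f\|_2^{-1} \geq c_0 \epsilon \|G_n\|_2$. Thus $G_n$ is a $\delta'$-near extremizer for $\delta' := c_0 \epsilon / {\mathbf C}[\Gamma]$ in the sense that $\|\widehat{G_n\sigma}\|_6 \geq \delta' {\mathbf C}[\Gamma]\|G_n\|_2$. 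Now apply Lemma \ref{decomposef} to $G_n$ with this fixed $\delta'$: it yields a cap $\C_n$ and the decomposition $G_n = g + h$ with $g$ supported on $\C_n$, $|g| \leq C_{\delta'} \|G_n\|_2 |\C_n|^{-1/2}\chi_{\C_n}$ and $\|g\|_2 \geq \eta_{\delta'}\|G_n\|_2$. Since the decomposition algorithm at step $n$ sets precisely $f_n = g$ and $G_{n+1} = h$ (invoking Lemma \ref{decomposef} applied to $G_n$), we get $f_n \leq C_{\delta'} \|G_n\|_2 |\C_n|^{-1/2}\chi_{\C_n} \leq C_{\delta'} \|f\|_2 |\C_n|^{-1/2}\chi_{\C_n}$, using $\|G_n\|_2 \leq \|G_0\|_2 = \|f\|_2$; setting $C_\epsilon := C_{\delta'}$ proves (i). For (ii), we have $\|f_n\|_2 = \|g\|_2 \geq \eta_{\delta'}\|G_n\|_2 \geq \eta_{\delta'}\epsilon \|f\|_2$, and shrinking $\delta_\epsilon$ further if necessary so that $\delta_\epsilon \leq \eta_{\delta'}\epsilon$ yields $\|f_n\|_2 \geq \delta_\epsilon \|f\|_2$, proving (ii).

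There is essentially no serious obstacle here — the statement is labelled as "a direct consequence," and the bulk of the real work was already done inside the proof of Lemma \ref{decalg3}. The only points requiring a modicum of care are bookkeeping ones: making sure the near-extremizer threshold $\delta_\epsilon$ is chosen small enough to simultaneously (a) kill the bad branch of the Lemma \ref{nearextremizers} dichotomy and (b) satisfy $\delta_\epsilon \leq \eta_{\delta'}\epsilon$ for part (ii); and confirming that when step $n$ of the algorithm runs without terminating, the cap and decomposition it records are exactly those furnished by Lemma \ref{decomposef} applied to $G_n$, so that the quantitative bounds transfer verbatim. One should also note that the hypothesis "$\|G_n\|_2 \geq \epsilon\|f\|_2$" in particular guarantees $G_n \neq 0$, so the algorithm does not terminate at step $n$ and $f_n$ is genuinely produced by the Lemma \ref{decomposef} branch.
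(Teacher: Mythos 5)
Your argument is correct and is exactly the route the paper intends (the paper omits the proof, calling the lemma a direct consequence of the algorithm and Lemma \ref{decomposef}): you use the dichotomy from Lemma \ref{nearextremizers} to convert the hypothesis $\|G_n\|_2\geq\epsilon\|f\|_2$ into the lower bound $\|\widehat{G_n\sigma}\|_6\gtrsim_\epsilon\|G_n\|_2$ required to run Lemma \ref{decomposef} on $G_n$ with a $\delta'$ depending only on $\epsilon$, and the bookkeeping on $\delta_\epsilon$ (including the absence of circularity, since $\delta'$ does not depend on $\delta_\epsilon$) is handled correctly.
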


\section{A geometric property of the decomposition}\label{sec:Geometric properties of the decomposition}

Consider two caps $\C,\C'\subset\Gamma$, and assume without loss of generality that $|\C'|\leq |\C|$. Let $f,g\in L^2(\sigma)$ be such that supp$(f)\subset\C$ and supp$(g)\subset\C'$. The following estimate is a direct consequence of \eqref{firstCS} and H\"{o}lder's inequality :
\begin{equation}\label{far}
\|f\sigma\ast g\sigma\|_{3/2}\lesssim\Big(\frac{\inf_{s,s'}|\sin(\theta(s)-\theta(s'))|}{|\C|^{1/2}|\C'|^{1/2}}\Big)^{-1/3}\|f\|_{L^2(\sigma)}\|g\|_{L^2(\sigma)}.
\end{equation}
We can rewrite this estimate in the following way: letting $\ell(\C,\C'):=\inf_{s\in\C,s'\in\C'}|s-s'|$, then \eqref{CSjo} and H\"{o}lder's inequality imply
\begin{equation}\label{far2}
\|f\sigma\ast g\sigma\|_{3/2}\lesssim\Big(\frac{\ell(\C,\C')}{|\C|^{1/2}|\C'|^{1/2}}\Big)^{-1/3}\|f\|_{L^2(\sigma)}\|g\|_{L^2(\sigma)}.
\end{equation}

For characteristic functions of caps we have the following additional estimate:
\begin{lemma}\label{estimatingcaps} 
Let $\C,\C'\subset\Gamma$ be caps. Then:
\begin{equation}\label{different}
\|\chi_\C\sigma\ast\chi_{\C'}\sigma\|_{L^{3/2}(\R^2)}\lesssim \Big(\frac{|\C'|}{|\C|}\Big)^{1/12} |\C|^{1/2}|\C'|^{1/2}.
\end{equation}
\end{lemma}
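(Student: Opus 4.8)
The plan is to establish the bound in two regimes according to how the sizes of $\C$ and $\C'$ compare, combining the general convolution estimate \eqref{far2} with a crude "support of the convolution has small measure" estimate. Write $\C=\C(s_0,r)$ and $\C'=\C(s_0',r')$ with $r'\leq r$, so $|\C|\asymp r$ and $|\C'|\asymp r'$ (arclength comparability). First I would record the trivial pointwise bound: on $\{s>s'\}$ the density of $\chi_\C\sigma\ast\chi_{\C'}\sigma$ equals $|\sin(\theta(s)-\theta(s'))|^{-1}$ where $(s,s')$ ranges over $\C\times\C'$ (intersected with $\{s>s'\}$), and by \eqref{noantipodes} this is $\lesssim |\theta(s)-\theta(s')|^{-1}\lesssim \lambda^{-1}|s-s'|^{-1}$; integrating the $3/2$ power of this over the image region, whose area is $O(rr')$ by the Jacobian bound $J\asymp|s-s'|$, one gets a clean $L^{3/2}$ bound. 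The cleanest route, though, is interpolation: I would prove the two endpoint-type inequalities
\begin{equation}\label{prop-far-instance}
\|\chi_\C\sigma\ast\chi_{\C'}\sigma\|_{3/2}\lesssim\Big(\frac{\ell(\C,\C')}{|\C|^{1/2}|\C'|^{1/2}}\Big)^{-1/3}|\C|^{1/2}|\C'|^{1/2}
\end{equation}
from \eqref{far2} (valid whenever $\C,\C'$ are essentially disjoint), and a second bound valid when $\C'\subset\C$ or the caps are close, and then combine.

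Concretely I would argue as follows. \emph{Case 1: the caps are far apart}, say $\ell(\C,\C')\geq c|\C|$ for a suitable small constant $c$. Then the factor $\ell(\C,\C')/(|\C|^{1/2}|\C'|^{1/2})\gtrsim |\C|^{1/2}/|\C'|^{1/2}=(|\C|/|\C'|)^{1/2}$, so \eqref{far2} (with $f=\chi_\C$, $g=\chi_{\C'}$, noting $\|\chi_\C\|_{L^2(\sigma)}=|\C|^{1/2}$) gives
\begin{equation*}
\|\chi_\C\sigma\ast\chi_{\C'}\sigma\|_{3/2}\lesssim\Big(\frac{|\C|}{|\C'|}\Big)^{-1/6}|\C|^{1/2}|\C'|^{1/2}=\Big(\frac{|\C'|}{|\C|}\Big)^{1/6}|\C|^{1/2}|\C'|^{1/2},
\end{equation*}
which is even stronger than \eqref{different} since $|\C'|/|\C|\leq 1$ and $1/6>1/12$. \emph{Case 2: the caps are close}, i.e.\ $\ell(\C,\C')\leq c|\C|$. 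Here $\eqref{far2}$ degenerates, so I would instead use the measure of the support. The convolution $\chi_\C\sigma\ast\chi_{\C'}\sigma$ is supported in $\C+\C'$, and I claim $|\C+\C'|\lesssim |\C|\,|\C'|$ (two-dimensional Lebesgue measure): indeed $\C+\C'$ is contained in $(\C+\C')$, which since $\C$ is an arc of length $O(r)$ and $\C'$ an arc of length $O(r')$, fits inside a box of dimensions $O(r)\times O(r'+r^2)$; using $r'\leq r$ and $r=O(1)$ this is $O(r)\times O(r)$ generically, but more precisely, parametrizing $\C+\C'$ by $(s,s')\mapsto\gamma(s)+\gamma(s')$ with Jacobian $J\asymp|s-s'|\lesssim r$ shows $|\C+\C'|\lesssim \int_\C\int_{\C'}|s-s'|\,ds\,ds'\lesssim rr'\cdot r\asymp |\C|^2|\C'|$... — I will want to be careful and instead simply use $|\C+\C'|\lesssim rr'=|\C||\C'|$, which follows because for fixed $s\in\C$ the curve $s'\mapsto\gamma(s)+\gamma(s')$ has length $O(r')$ and transversality (bounded curvature, no colinear tangents) makes the union over $s\in\C$ sweep area $\lesssim r\cdot r'$. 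Then combine with an $L^\infty$-type control: by Cauchy–Schwarz and Hausdorff–Young as in \eqref{firstCS},
\begin{equation*}
\|\chi_\C\sigma\ast\chi_{\C'}\sigma\|_{3/2}\leq |\C+\C'|^{2/3}\,\|\chi_\C\sigma\ast\chi_{\C'}\sigma\|_\infty,
\end{equation*}
but $\|\chi_\C\sigma\ast\chi_{\C'}\sigma\|_\infty$ need not be finite when the caps overlap (the density blows up like $|s-s'|^{-1}$). So instead I would directly estimate, via Hölder with exponents $(3,3/2)$ applied to $\mathbf 1_{\C+\C'}\cdot(\chi_\C\sigma\ast\chi_{\C'}\sigma)$ against the density bound $|s-s'|^{-1}$: writing out the $L^{3/2}$ norm in the $(u,v)$ variables and pulling back to $(s,s')$ using $dudv=J\,dsds'$ with $J\asymp|s-s'|$,
\begin{equation*}
\|\chi_\C\sigma\ast\chi_{\C'}\sigma\|_{3/2}^{3/2}=\iint_{\C\times\C'} J^{-1/2}\,ds\,ds'\lesssim\iint_{\C\times\C'}|s-s'|^{-1/2}\,ds\,ds'\lesssim r\,(r')^{1/2}\asymp |\C|\,|\C'|^{1/2},
\end{equation*}
where the last integral is bounded by fixing $s'\in\C'$, integrating $|s-s'|^{-1/2}$ over $s\in\C$ to get $O(r^{1/2})$, times $|\C'|=r'$ — wait, that gives $r^{1/2}r'$; redoing: $\int_{\C'}(\int_\C |s-s'|^{-1/2}ds)ds'\lesssim \int_{\C'} r^{1/2}\,ds'=r^{1/2}r'$, so $\|\cdot\|_{3/2}^{3/2}\lesssim r^{1/2}r'$ and $\|\cdot\|_{3/2}\lesssim r^{1/3}(r')^{2/3}=|\C|^{1/3}|\C'|^{2/3}$. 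Finally check this is $\lesssim (|\C'|/|\C|)^{1/12}|\C|^{1/2}|\C'|^{1/2}$, i.e.\ $|\C|^{1/3}|\C'|^{2/3}\lesssim |\C'|^{1/12-1/2}|\C|^{-1/12+1/2}\cdot$... equivalently $|\C|^{1/3-5/12}|\C'|^{2/3-7/12}=|\C|^{-1/12}|\C'|^{1/12}=(|\C'|/|\C|)^{1/12}$, matching exactly. So Case 2 is sharp for \eqref{different}, and Case 1 is better.

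\textbf{Main obstacle.} I expect the delicate point to be the near/overlapping regime: when $\C$ and $\C'$ genuinely intersect, the density of the convolution is not bounded (it blows up like $|\theta(s)-\theta(s')|^{-1}$ along the diagonal), so one cannot simply use "$L^\infty$ times measure of support". The fix is to keep the integrable singularity $|s-s'|^{-1/2}$ in the $L^{3/2}$ integrand, exactly as in the derivation of \eqref{firstCS}, rather than discarding it — the exponent $3/2$ is precisely what makes $|s-s'|^{-1/2}$ integrable in one variable. The other point requiring care is the case split itself: one must verify that the threshold $\ell(\C,\C')\asymp|\C|$ cleanly separates the regime where \eqref{far2} is usable from the regime where the direct computation is needed, and that in the intermediate range both bounds are available and consistent; this is a routine but slightly fiddly chase of the exponents $\tfrac16$ versus $\tfrac1{12}$.
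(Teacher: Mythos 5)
Your proof is correct, but it takes a genuinely different route from the paper's — and in fact your ``Case 2'' computation alone already proves the lemma, with a better exponent. The paper's argument fixes an intermediate cap $\C^*\supset\C'$ of size $|\C^*|=|\C|^{3/4}|\C'|^{1/4}$, splits $\chi_\C=\chi_{\C\cap\C^*}+\chi_{\C\setminus\C^*}$, bounds the near piece by the trivial estimate \eqref{HLS} and the far pieces by \eqref{far2} with $\ell\gtrsim|\C^*|$; the two contributions are $(|\C'|/|\C|)^{1/8}$ and $(|\C'|/|\C|)^{1/12}$ times $|\C|^{1/2}|\C'|^{1/2}$, whence the stated $1/12$. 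Your direct computation
\begin{equation*}
\|\chi_\C\sigma\ast\chi_{\C'}\sigma\|_{3/2}^{3/2}\lesssim\iint_{\C\times\C'}|s-s'|^{-1/2}\,ds\,ds'\lesssim|\C|^{1/2}\,|\C'|
\end{equation*}
(which is just \eqref{firstCS} applied to characteristic functions, together with $|\sin(\theta(s)-\theta(s'))|\gtrsim_{\lambda,\delta_0}|s-s'|$ and $\sup_{s'}\int_\C|s-s'|^{-1/2}\,ds\lesssim|\C|^{1/2}$) uses no information whatsoever about the relative position of the two caps, so your case split is superfluous: for every pair of caps with $|\C'|\leq|\C|$ it yields $\|\chi_\C\sigma\ast\chi_{\C'}\sigma\|_{3/2}\lesssim|\C|^{1/3}|\C'|^{2/3}=(|\C'|/|\C|)^{1/6}|\C|^{1/2}|\C'|^{1/2}$. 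Contrary to your closing remark, this is not ``sharp for \eqref{different}'': it improves on \eqref{different} by the extra factor $(|\C'|/|\C|)^{1/12}$, and it also subsumes your Case 1, which independently produced the same exponent $1/6$. Since Lemma \ref{widc} only requires some positive power of $|\C'|/|\C|$, the stronger conclusion is harmless. Two small points of hygiene: the identity $\|\chi_\C\sigma\ast\chi_{\C'}\sigma\|_{3/2}^{3/2}=\iint_{\C\times\C'}J^{-1/2}\,ds\,ds'$ should be an inequality $\lesssim$ (one must first split into $\{s>s'\}$ and $\{s<s'\}$, on each of which $(s,s')\mapsto\gamma(s)+\gamma(s')$ is injective, and apply the triangle inequality), and the digression about $|\C+\C'|$ and $L^\infty$ bounds — which indeed fails because the density blows up like $|s-s'|^{-1}$ — is rightly abandoned and could simply be deleted.
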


\begin{proof}
Without loss of generality we might assume that $10|\C'|\leq |\C|$, otherwise estimate \eqref{different} is just a consequence of the fundamental inequality 

\begin{equation}\label{HLS}
\|f\sigma\ast g\sigma\|_{3/2}\lesssim \|f\|_{L^2(\sigma)}\|g\|_{L^2(\sigma)}.
\end{equation}
Let $\C^*$ be a cap neighborhood of $\C'$ with the same center and of size $|\C^*|=|\C|^{3/4}|\C'|^{1/4}$. Split $\chi_\C=\chi_{\C\cap \C^*}+\chi_{\C\setminus\C^*}$. Then

$$\|\chi_\C\sigma\ast\chi_{\C'}\sigma\|_{3/2}\leq\|\chi_{\C\cap\C^*}\sigma\ast\chi_{\C'}\sigma\|_{3/2}+\|\chi_{\C\setminus \C^*}\sigma\ast\chi_{\C'}\sigma\|_{3/2}.$$

\noindent 
The first summand can be easily estimated using \eqref{HLS}.
While $\C\setminus\C^*$ is not necessarily a cap, it is the union of at most two caps, say, $\C_1$ and $\C_2$. 
We can use estimate \eqref{far2} to control the contribution of each of these caps. 
Noting that 
$$\min\{\ell(\C_1,\C'),\ell(\C_2,\C')\}\geq\ell(\C\setminus\C^*,\C')\gtrsim |\C^*|,$$ 
we have that
\begin{align*}
\|\chi_\C\sigma\ast\chi_{\C'}\sigma\|_{3/2}
\lesssim& \Big(|\C\cap\C^*|^{1/2}+\Big(\frac{\ell(\C_1,\C')}{|\C_1|^{1/2}|\C'|^{1/2}}\Big)^{-1/3}|\C_1|^{1/2}+\Big(\frac{\ell(\C_2,\C')}{|\C_2|^{1/2}|\C'|^{1/2}}\Big)^{-1/3}|\C_2|^{1/2}\Big)|\C'|^{1/2}\\
\lesssim& \Big(|\C^*|^{1/2}+\Big(\frac{|\C^*|}{|\C|^{1/2}|\C'|^{1/2}}\Big)^{-1/3}|\C|^{1/2}\Big)|\C'|^{1/2}\\
\lesssim& \Big(\frac{|\C'|}{|\C|}\Big)^{1/12} |\C|^{1/2}|\C'|^{1/2},
\end{align*}  
as desired.
\end{proof}

The set of all caps can be made into a metric space.
We define the distance $d$ from $\C=\C(s,r)$ to $\C'=\C(s',r')$ to be the hyperbolic distance from $(s,r)$ to $(s',r')$ in the upper half plane model. More explicitly, we have that
\begin{equation}\label{hypdist}
d(\C,\C'):=\textrm{arc}\cosh\Big(1+\frac{(s-s')^2+(r-r')^2}{2rr'}\Big).
\end{equation}
If $s=s'$, then the distance depends only on the ratio of the two radii.
When $r=r'$, the distance is $\asymp r^{-1}|s-s'|$ and so this distance has the natural scaling. 

We can use estimates \eqref{far2} and \eqref{different} to prove that  the quantity $\|\chi_\C\sigma\ast\chi_{\C'}\sigma\|_{3/2}$ is much smaller than the trivial bound $|\C|^{1/2}|\C'|^{1/2}$ unless $\C,\C'$ have comparable radii and nearby centers:

\begin{lemma}\label{widc}
For any $\epsilon>0$ there exists $\rho<\infty$ such that
$$\|\chi_\C\sigma\ast\chi_{\C'}\sigma\|_{L^{3/2}(\R^2)}<\epsilon |\C|^{1/2}|\C'|^{1/2}$$
whenever
$$d(\C,\C')>\rho.$$
\end{lemma}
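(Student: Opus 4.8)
The plan is to split into two regimes according to the ratio of the cap sizes, and in each regime extract quantitative decay from the estimates already available. Write $\C = \C(s,r)$ and $\C' = \C(s',r')$, and assume without loss of generality that $|\C'| \leq |\C|$, i.e. $r' \leq r$. Set $\tau := r'/r \leq 1$. The hyperbolic distance formula \eqref{hypdist} shows that $d(\C,\C')$ large forces $1 + \frac{(s-s')^2 + (r-r')^2}{2rr'}$ to be large, so at least one of the two dimensionless quantities $\frac{|r-r'|^2}{rr'}$ or $\frac{(s-s')^2}{rr'}$ must be large. I would record at the outset the elementary fact that $d(\C,\C') > \rho$ implies $\max\{\tau^{-1/2}, \tau^{1/2}\} + \frac{|s-s'|}{\sqrt{rr'}} \gtrsim e^{\rho/2}$ (from $\cosh x \asymp e^{|x|}$ for large $x$), so that either $\tau \leq e^{-\rho/2}$ (the caps have very different sizes) or $\tau > e^{-\rho/2}$ and then $|s-s'| \gtrsim e^{\rho/4} r'$ (the caps have comparable sizes but distant centers).

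\textbf{Case 1: $\tau$ small.} If $|\C'|/|\C| = \tau \leq \tau_0(\epsilon)$ for a threshold to be chosen, then estimate \eqref{different} of Lemma \ref{estimatingcaps} gives directly
$$\|\chi_\C\sigma\ast\chi_{\C'}\sigma\|_{3/2} \lesssim \tau^{1/12}\, |\C|^{1/2}|\C'|^{1/2} \leq \tau_0^{1/12}\, |\C|^{1/2}|\C'|^{1/2},$$
which is $< \epsilon\,|\C|^{1/2}|\C'|^{1/2}$ once $\tau_0$ is small enough relative to $\epsilon$ and the implicit constant.

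\textbf{Case 2: $\tau$ not small but centers far apart.} Here $\tau > \tau_0$, so $|\C| \asymp_{\tau_0} |\C'|$, and the largeness of $d(\C,\C')$ must come from the center separation: $|s-s'| \gtrsim_{\tau_0} e^{\rho/4} r' \asymp e^{\rho/4}\sqrt{|\C||\C'|}$. Since the caps are (essentially) disjoint when $|s-s'| > r + r'$, one has $\ell(\C,\C') \geq |s-s'| - r - r' \gtrsim_{\tau_0} e^{\rho/4}\sqrt{|\C||\C'|}$ once $\rho$ is large. Applying \eqref{far2} with $f = \chi_\C$, $g = \chi_{\C'}$ gives
$$\|\chi_\C\sigma\ast\chi_{\C'}\sigma\|_{3/2} \lesssim \Big(\frac{\ell(\C,\C')}{|\C|^{1/2}|\C'|^{1/2}}\Big)^{-1/3} |\C|^{1/2}|\C'|^{1/2} \lesssim_{\tau_0} e^{-\rho/12}\, |\C|^{1/2}|\C'|^{1/2},$$
which is again $< \epsilon\,|\C|^{1/2}|\C'|^{1/2}$ for $\rho$ large. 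Finally one fixes $\tau_0 = \tau_0(\epsilon)$ from Case 1, and then chooses $\rho = \rho(\epsilon, \tau_0) = \rho(\epsilon)$ large enough that both $e^{-\rho/2} \leq \tau_0$ (which puts us in Case 1 whenever the sizes are that disparate) and the Case 2 bound holds; this $\rho$ works uniformly.

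The only mildly delicate point is bookkeeping the $\tau_0$-dependence in Case 2: the implicit constant in \eqref{far2} is universal, but translating "$d(\C,\C')$ large" into "$\ell(\C,\C')$ large" uses $rr' \asymp_{\tau_0} r^2 \asymp |\C|^2$, so one must make sure the loss of a $\tau_0$-dependent constant there is harmless — which it is, because $\tau_0$ is chosen \emph{first} (depending only on $\epsilon$) and then $\rho$ is chosen depending on both, hence ultimately on $\epsilon$ alone. I do not expect any genuine obstacle; the lemma is a quantitative repackaging of \eqref{far2} and \eqref{different}, and the hyperbolic-distance formula \eqref{hypdist} is precisely engineered so that "distant in $d$" means "very different radii or very separated centers," which are exactly the two situations those two estimates handle.
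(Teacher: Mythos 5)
Your proof is correct and follows essentially the same route as the paper's: both arguments reduce ``$d(\C,\C')$ large'' to the dichotomy ``disparate radii (handled by Lemma \ref{estimatingcaps}) or separated centers (handled by \eqref{far2})''. The paper organizes this into three cases with a fixed threshold $r'/r<1/10$, whereas you use an $\epsilon$-dependent threshold $\tau_0$ chosen before $\rho$, but the ingredients and their roles are identical.
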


\begin{proof}
Let $\C=\C(s,r)$ and $\C'=\C'(s',r')$. As before, assume $r'\leq r$. We consider three cases.

Start by assuming that $\C$ and $\C'$ have comparable radii: say, $\frac{1}{10}r\leq r'\leq r.$ Then $\C$ and $\C'$ are not far apart unless the corresponding centers are far apart. We may therefore assume that $|s-s'|\geq 10r$, which in turn implies $\ell(\C,\C')\gtrsim |s-s'|$. Using estimate \eqref{far2}, we conclude that

$$\|\chi_\C\sigma\ast\chi_{\C'}\sigma\|_{3/2}\lesssim\Big(\frac{|s-s'|}{r}\Big)^{-1/3}|\C|^{1/2}|\C'|^{1/2}\lesssim (\cosh\rho)^{-1/6}|\C|^{1/2}|\C'|^{1/2}$$
provided $d(\C,\C')>\rho$.

Assume now that $10r'<r$. 
If $|s-s'|<10r$, then we can use Lemma \ref{estimatingcaps} to conclude

$$\|\chi_\C\sigma\ast\chi_{\C'}\sigma\|_{3/2}\lesssim\Big(\frac{r'}{r}\Big)^{1/12} |\C|^{1/2}|\C'|^{1/2}.$$
This quantity is $\lesssim(\cosh\rho)^{-1/12}|\C|^{1/2}|\C'|^{1/2}$ provided $d(\C,\C')>\rho$, and this concludes the analysis in this case.
If on the other hand $|s-s'|\geq10r$, then $d(\C,\C')>\rho$ implies
$$\frac{(s-s')^2}{rr'}\gtrsim \cosh\rho\;\textrm{ or }\;\frac{r}{r'}\gtrsim\cosh\rho.$$
Using, as before, estimate \eqref{far2} in the former case and Lemma \ref{estimatingcaps} in the latter, we arrive at the desired conclusion.
\end{proof}

\noindent For applications later on, we will need a trilinear version of this lemma which follows immediately from the previous result:

\begin{corollary}\label{triwidc}
For any $\epsilon>0$ there exists $\rho<\infty$ such that
$$\|\chi_\C\sigma\ast\chi_{\C'}\sigma\ast\chi_{\C''}\sigma\|_{L^2(\R^2)}<\epsilon |\C|^{1/2}|\C'|^{1/2}|\C''|^{1/2}$$
whenever
$$\max\{d(\C,\C'), d(\C',\C''), d(\C'',\C)\}>\rho.$$
\end{corollary}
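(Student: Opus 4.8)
The plan is to reduce the trilinear estimate to the bilinear Lemma \ref{widc} via a single application of H\"older's inequality, together with the fundamental bound \eqref{HLS}. First I would observe that the hypothesis $\max\{d(\C,\C'), d(\C',\C''), d(\C'',\C)\}>\rho$ means at least one of the three pairwise distances exceeds $\rho$; by relabeling (the convolution $\ast$ is commutative, and the three caps play symmetric roles), we lose no generality in assuming $d(\C,\C')>\rho$. The idea is then to split off the troublesome pair: write
\begin{equation*}
\|\chi_\C\sigma\ast\chi_{\C'}\sigma\ast\chi_{\C''}\sigma\|_{L^2(\R^2)}\leq\|\chi_\C\sigma\ast\chi_{\C'}\sigma\|_{L^{3/2}(\R^2)}\cdot\|\chi_{\C''}\sigma\|_{?},
\end{equation*}
except that this exact inequality is not quite Young's, so I would instead use the bilinearization already built into the paper: group $h:=\chi_\C\sigma\ast\chi_{\C'}\sigma$ as one measure-like object and estimate $\|h\ast\chi_{\C''}\sigma\|_2$.

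More precisely, I would apply the same Hausdorff--Young / Young chain that underlies \eqref{HLS}: from $\|F\ast G\|_2\lesssim\|F\|_{3/2}\|G\sigma\|_{\cdots}$-type reasoning, one gets a bound of the form $\|h\ast\chi_{\C''}\sigma\|_2\lesssim\|h\|_{3/2}\,|\C''|^{1/2}$ provided $\C''$ is not much larger than the relevant scales, and if $\C''$ \emph{is} much larger one reverses the roles. To keep this clean, the most economical route is: estimate $\|\chi_\C\sigma\ast\chi_{\C'}\sigma\ast\chi_{\C''}\sigma\|_2$ by interpolating/applying \eqref{HLS} to one of the two factors and Lemma \ref{widc} to the other. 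Concretely, since $\|\chi_{\C''}\sigma\ast\psi\|_2\lesssim|\C''|^{1/2}\|\psi\|_2$ fails in general, I would instead bound
\begin{equation*}
\|\chi_\C\sigma\ast\chi_{\C'}\sigma\ast\chi_{\C''}\sigma\|_{2}\lesssim\|\chi_\C\sigma\ast\chi_{\C'}\sigma\|_{3/2}^{1/2}\,\big(|\C|^{1/2}|\C'|^{1/2}\big)^{1/2}\,|\C''|^{1/2},
\end{equation*}
using the trivial bound \eqref{HLS} together with the refined bound from Lemma \ref{widc}, exactly as the Tomas--Stein argument in \S 2 passes from the bilinear $L^{3/2}$ bound to the trilinear $L^2$ bound by writing $f\sigma\ast f\sigma\ast f\sigma$ and invoking Young. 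Then Lemma \ref{widc} applied with $\epsilon^2$ in place of $\epsilon$ gives $\|\chi_\C\sigma\ast\chi_{\C'}\sigma\|_{3/2}<\epsilon^2|\C|^{1/2}|\C'|^{1/2}$ once $\rho$ is chosen large enough, and plugging this in yields $\|\chi_\C\sigma\ast\chi_{\C'}\sigma\ast\chi_{\C''}\sigma\|_2\lesssim\epsilon\,|\C|^{1/2}|\C'|^{1/2}|\C''|^{1/2}$, which is the claim after absorbing the implicit constant into the choice of $\rho$.

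The one point requiring care — and the main obstacle — is justifying the interpolation-type inequality bounding the trilinear $L^2$ norm by (a power of) the bilinear $L^{3/2}$ norm times elementary cap-size factors, uniformly in the caps and without any a priori comparability of $|\C''|$ to $|\C|,|\C'|$. This is where one must be slightly careful about which exponents Young's inequality allows: the clean statement is $\|F\ast G\sigma\|_2\lesssim\|F\|_{3/2}\|G\|_{L^2(\sigma)}\cdot(\text{geometry})$, which is precisely the content packaged in \eqref{firstCS}--\eqref{HLS}. I would therefore simply mirror the final display of \S 2: $\|\mu\ast\nu\ast\rho\|_2=\|\widehat{\mu}\widehat{\nu}\widehat{\rho}\|_{4/3}^{3/4}\lesssim$ — no; more safely, run the Carleson--Sj\"olin change of variables once on the pair $(\C,\C')$ to see that $\chi_\C\sigma\ast\chi_{\C'}\sigma$ is an honest $L^{3/2}$ function supported in $\Gamma+\Gamma$, and then convolve with $\chi_{\C''}\sigma$ using the bilinear bound $\|F\sigma\ast G\sigma\|_{3/2}\lesssim\|F\|_2\|G\|_2$ applied (with $F$ replaced by the density $\chi_\C\sigma\ast\chi_{\C'}\sigma$ restricted to $\Gamma$) — but that requires the density to be supported near $\Gamma$, which it is not. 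The genuinely correct and simplest argument is thus the symmetric one: by relabeling assume $d(\C,\C')>\rho$, write $\chi_\C\sigma\ast\chi_{\C'}\sigma\ast\chi_{\C''}\sigma=(\chi_\C\sigma\ast\chi_{\C''}\sigma)\ast\chi_{\C'}\sigma$ is \emph{not} what we want either; rather, we directly estimate in $L^2$ via $\|u\ast v\|_2\leq\|u\|_1\|v\|_2$ with $u=\chi_{\C''}\sigma\ast\chi_{\C}\sigma$ is again ruled out. I will therefore present it as: $\|\chi_\C\sigma\ast\chi_{\C'}\sigma\ast\chi_{\C''}\sigma\|_2\leq\|\chi_\C\sigma\ast\chi_{\C'}\sigma\|_{3/2}\|\widehat{\chi_{\C''}\sigma}\|_\infty$ is false too — so the honest resolution is to interpolate between \eqref{HLS}-type $L^2$ control and the $L^{3/2}$ gain of Lemma \ref{widc} by H\"older on the product of three Fourier transforms in $L^2$, i.e. $\|\widehat{\chi_\C\sigma}\widehat{\chi_{\C'}\sigma}\widehat{\chi_{\C''}\sigma}\|_2\leq\|\widehat{\chi_\C\sigma}\widehat{\chi_{\C'}\sigma}\|_3\|\widehat{\chi_{\C''}\sigma}\|_6$, and $\|\widehat{\chi_\C\sigma}\widehat{\chi_{\C'}\sigma}\|_3=\|\chi_\C\sigma\ast\chi_{\C'}\sigma\|_{3/2}$ (Hausdorff--Young is an equality up to constants here in the direction we need, or just use $\|\cdot\|_3\leq\|\cdot\|_{3/2}^{\ast}$), while $\|\widehat{\chi_{\C''}\sigma}\|_6\lesssim|\C''|^{1/2}$ by Tomas--Stein \eqref{TS}. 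Then Lemma \ref{widc} finishes it. The only real work is checking $\|\widehat{\chi_\C\sigma}\widehat{\chi_{\C'}\sigma}\|_{L^3}\lesssim\|\chi_\C\sigma\ast\chi_{\C'}\sigma\|_{L^{3/2}}$, which is just Hausdorff--Young, so the corollary is genuinely immediate once the right splitting of the $L^2$ norm of a product of three transforms is chosen.
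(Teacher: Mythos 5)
Your final argument --- H\"older in the form $\|\widehat{\chi_\C\sigma}\,\widehat{\chi_{\C'}\sigma}\,\widehat{\chi_{\C''}\sigma}\|_2\leq\|\widehat{\chi_\C\sigma}\,\widehat{\chi_{\C'}\sigma}\|_3\,\|\widehat{\chi_{\C''}\sigma}\|_6$, then Hausdorff--Young plus Lemma \ref{widc} for the far pair and Tomas--Stein for the remaining factor --- is correct and is exactly the mechanism of the paper's proof, which merely inserts a preliminary Cauchy--Schwarz step \eqref{tripleCS} before running the same chain on one of the resulting factors. The numerous false starts in your write-up should be deleted, but the surviving argument is sound, given the relabeling remark that lets you assume the far pair is $(\C,\C')$.
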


\begin{proof}
 Using Cauchy-Schwarz,
 \begin{equation}\label{tripleCS}
\|\chi_\C\sigma\ast\chi_{\C'}\sigma\ast\chi_{\C''}\sigma\|_{2}\leq\|\chi_\C\sigma\ast\chi_{\C}\sigma\ast\chi_{\C'}\sigma\|_{2}^{1/2}\|\chi_{\C'}\sigma\ast\chi_{\C''}\sigma\ast\chi_{\C''}\sigma\|_{2}^{1/2}.
\end{equation}
Without loss of generality we may assume that the caps $\C$ and $\C'$ are far apart: in view of Lemma \ref{widc}, we can choose $\rho<\infty$ so that $d(\C,\C')>\rho$ implies $\|{\chi_\C\sigma}\ast{\chi_{\C'}\sigma}\|_{3/2}<\epsilon^2 |\C|^{1/2}|\C'|^{1/2}$. Bound the first factor appearing in \eqref{tripleCS} as follows:
\begin{align*}
\|\chi_\C\sigma\ast\chi_{\C}\sigma\ast\chi_{\C'}\sigma\|_{2}&\simeq\|\widehat{\chi_\C\sigma}\cdot\widehat{\chi_\C\sigma}\cdot\widehat{\chi_{\C'}\sigma}\|_2\leq \|\widehat{\chi_\C\sigma}\|_6\|\widehat{\chi_\C\sigma}\cdot\widehat{\chi_{\C'}\sigma}\|_3\\
&\leq\|\widehat{\chi_\C\sigma}\|_6\|{\chi_\C\sigma}\ast{\chi_{\C'}\sigma}\|_{3/2}<\epsilon^2 |\C||\C'|^{1/2}.
\end{align*}
The proof is then complete in view of  the trivial estimate
$\|\chi_{\C'}\sigma\ast\chi_{\C''}\sigma\ast\chi_{\C''}\sigma\|_{2}\leq |\C'|^{1/2}|\C''|.$
\end{proof}

\noindent Corollary \ref{triwidc} allows us to establish the following additional inequality of geometric nature, which can be proved in an identical way to \cite[Lemma 2.38]{ThQ}. 
\begin{lemma}\label{geoppty}
For any $\epsilon>0$ there exist $\delta>0$ and $\lambda<\infty$ such that for any nonnegative $ f\in L^2(\sigma)$ which is $\delta$-near extremizer, the summands $f_n$ produced by the decomposition algorithm and the associated caps $\C_n$ satisfy 
$$d(\C_j,\C_k)\leq\lambda\textrm{ whenever }\|f_j\|_2\geq\epsilon\|f\|_2\textrm{ and }\|f_k\|_2\geq\epsilon\|f\|_2.$$
\end{lemma}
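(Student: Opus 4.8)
The plan is to argue by contradiction, exploiting the fact that the trilinear form controls a substantial fraction of the $L^2$-norm of $G_0\sigma\ast G_0\sigma\ast G_0\sigma$. Suppose the conclusion fails: then there is $\epsilon>0$ such that for every $\delta>0$ and every $\lambda<\infty$ one can find a nonnegative $\delta$-near extremizer $f$ (normalized so that $\|f\|_2=1$) and indices $j,k$ with $\|f_j\|_2\geq\epsilon$, $\|f_k\|_2\geq\epsilon$, yet $d(\C_j,\C_k)>\lambda$. Since $j\neq k$ (a cap cannot be far from itself), and since by Lemma~\ref{decalg4}(i) we have the pointwise bounds $f_j\leq C_\epsilon|\C_j|^{-1/2}\chi_{\C_j}$ and $f_k\leq C_\epsilon|\C_k|^{-1/2}\chi_{\C_k}$ once $\delta$ is taken below the threshold $\delta_\epsilon$, the combined lower bound $\|f_j\|_2\geq\epsilon$ together with Lemma~\ref{L1lowerbound} (or simply a direct computation) forces $\int_{\C_j}f_j^{3/2}\,d\sigma\gtrsim_\epsilon|\C_j|^{1/4}$, hence in particular $f_j$ is a genuine, non-negligible chunk of $f$ living on $\C_j$, and similarly for $f_k$.

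The heart of the matter is to quantify how much the presence of two far-apart caps \emph{degrades} the trilinear norm. Write $f=f_j+f_k+r$ where $r=f-f_j-f_k\geq 0$. Expanding $\|f\sigma\ast f\sigma\ast f\sigma\|_2$ by multilinearity produces a sum of $27$ terms of the form $\|u\sigma\ast v\sigma\ast w\sigma\|_2$ with $u,v,w\in\{f_j,f_k,r\}$. For the purely near-extremal lower bound we have $\|f\sigma\ast f\sigma\ast f\sigma\|_2\geq(1-\delta)^3\frac{{\mathbf C}[\Gamma]^3}{2\pi}$ by \eqref{convTS} and the near-extremizer hypothesis. On the other hand, each term involving \emph{both} $f_j$ and $f_k$ can be bounded via Corollary~\ref{triwidc}: since $f_j\leq C_\epsilon|\C_j|^{-1/2}\chi_{\C_j}$ and $f_k\leq C_\epsilon|\C_k|^{-1/2}\chi_{\C_k}$, and $r\leq f$, one estimates $\|u\sigma\ast v\sigma\ast w\sigma\|_2\leq C_\epsilon^{\#}\|\chi_{\C_j}\sigma\ast\chi_{\C_k}\sigma\ast\chi_{\C''}\sigma\|_2\cdot(\text{normalizing factors})$, where $\C''$ is either $\C_j$, $\C_k$, or (bounding $r$ crudely by its $L^2$ mass supported on some cap, or splitting $\Gamma$ into the two caps plus complement) handled by the trivial estimate. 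Choosing $\lambda=\rho(\epsilon')$ large enough in Corollary~\ref{triwidc} makes every such mixed term $\leq\epsilon'$ for any prescribed $\epsilon'>0$.

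The remaining terms are those \emph{not} involving both $f_j$ and $f_k$; grouping them, they sum to at most $\|(f-f_j)\sigma\ast(f-f_j)\sigma\ast(f-f_j)\sigma\|_2+\|(f-f_k)\sigma\ast(f-f_k)\sigma\ast(f-f_k)\sigma\|_2+(\text{mixed terms already controlled})$, and by \eqref{convTS} each of these is $\leq\frac{{\mathbf C}[\Gamma]^3}{2\pi}\|f-f_j\|_2^3$, respectively with $f_k$. Since $f_j\perp f_k$ and both have norm $\geq\epsilon$, while $\|f\|_2=1$, we get $\|f-f_j\|_2^2=1-\|f_j\|_2^2\leq 1-\epsilon^2$ and similarly for $k$; so each of these contributions is at most $\frac{{\mathbf C}[\Gamma]^3}{2\pi}(1-\epsilon^2)^{3/2}$, which is strictly less than $\frac{{\mathbf C}[\Gamma]^3}{2\pi}$ by a fixed gap depending only on $\epsilon$. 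Combining: $(1-\delta)^3\frac{{\mathbf C}[\Gamma]^3}{2\pi}\leq 2\cdot\frac{{\mathbf C}[\Gamma]^3}{2\pi}(1-\epsilon^2)^{3/2}+C_\epsilon^{\#}\epsilon'$. Taking $\epsilon'$ small (which fixes $\lambda$) and then $\delta$ small yields a contradiction provided $2(1-\epsilon^2)^{3/2}<1$; if $\epsilon$ is not small enough for that, one first notes that the decomposition also produces the complementary mass cleanly — one should instead compare against $\|f\sigma\ast f\sigma\ast f\sigma\|_2$ minus only the genuinely "cross" interactions, arguing that removing $f_k$ entirely costs a definite amount because $f_k$ contributes positively to the convolution and is far from $f_j$.

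\textbf{Main obstacle.} The delicate point is the last bookkeeping step: a naive split into $27$ terms, bounding the two "pure" groups by $(1-\epsilon^2)^{3/2}$ each, only gives a contradiction when $\epsilon$ is large, whereas the hypothesis only gives $\epsilon>0$ small. The correct route — as in \cite[Lemma 2.38]{ThQ} — is to observe that since $f_j,f_k$ are both bona fide caps with $\gtrsim_\epsilon$ mass and \emph{disjoint} support, one can run the estimate so that the mixed $f_j$–$f_k$ terms are the \emph{only} ones separating $\|f\sigma\ast f\sigma\ast f\sigma\|_2$ from a quantity manifestly $\leq(1-c(\epsilon))\frac{{\mathbf C}[\Gamma]^3}{2\pi}$, using that any trilinear term entirely avoiding $f_k$ is dominated by $\frac{{\mathbf C}[\Gamma]^3}{2\pi}\|f-f_k\|_2\|f\|_2^2\cdot(\text{something})$ — i.e. one uses a single clean application of \eqref{HLS}-type bounds to peel off $f_k$ linearly rather than cubing. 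I expect the honest argument to mirror \cite[Lemma 2.38]{ThQ} essentially verbatim, and the real work is verifying that Corollary~\ref{triwidc} applies with $r\leq f$ crudely absorbed (splitting $r$ over $\Gamma\setminus(\C_j\cup\C_k)$ or using the trivial $L^2$ bound on the far factor), so that distance $>\lambda$ between $\C_j$ and $\C_k$ indeed kills all cross terms uniformly.
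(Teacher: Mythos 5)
You correctly locate the key obstacle: writing $f=f_j+f_k+r$ and bounding the two ``pure'' triple convolutions by $\frac{{\mathbf C}[\Gamma]^3}{2\pi}(1-\epsilon^2)^{3/2}$ each gives a total $2(1-\epsilon^2)^{3/2}$, which exceeds $1$ for small $\epsilon$, so no contradiction follows. But you do not then supply the actual fix, and the closing gesture at ``peeling off $f_k$ linearly rather than cubing'' is not an argument.

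The paper's resolution is different in one crucial structural respect. It first truncates to $F=\sum_{n=0}^N f_n$ with $N\leq M_\epsilon$ (discarding the small tail $G_{N+1}$, whose contribution costs only $O(\epsilon^3)$), and then invokes the combinatorial partition lemma \cite[Lemma 9.1]{CS}: since $d(\C_{j_0},\C_{k_0})>\lambda$ and there are at most $M_\epsilon+1$ caps, one can partition the index set $[0,N]=S_1\cup S_2$ with $j_0\in S_1$, $k_0\in S_2$, and $d(\C_j,\C_k)\geq \lambda/2N$ for \emph{every} $j\in S_1$, $k\in S_2$. Setting $F_1=\sum_{n\in S_1}f_n$ and $F_2=\sum_{n\in S_2}f_n$, the two pieces are disjointly supported and together \emph{exhaust} $F$; there is no unassigned remainder $r$. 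Expanding $\|F\sigma\ast F\sigma\ast F\sigma\|_2$ then produces exactly two pure terms, bounded by $\frac{{\mathbf C}[\Gamma]^3}{2\pi}(\|F_1\|_2^3+\|F_2\|_2^3)$, and the elementary estimate
\begin{equation*}
\|F_1\|_2^3+\|F_2\|_2^3\leq\max(\|F_1\|_2,\|F_2\|_2)\big(\|F_1\|_2^2+\|F_2\|_2^2\big)\leq(1-\epsilon^2)^{1/2}\cdot 1
\end{equation*}
yields a deficit $1-(1-\epsilon^2)^{1/2}>0$ valid for every $\epsilon>0$, with no spurious factor of $2$. The $F_1$--$F_2$ cross terms are then controlled by Corollary~\ref{triwidc} essentially as you envision, and this is where the pointwise bounds $f_n\leq\theta(\epsilon)|\C_n|^{-1/2}\chi_{\C_n}$ for $n\leq N$ are used: since every surviving index carries a cap, your worry about how to absorb the remainder $r$ into a cap-based trilinear estimate never arises.

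In short, the gap in your proposal is real and cannot be repaired by rearranging the $27$ terms: the three-way split $f_j+f_k+r$ is the wrong decomposition. The essential missing idea is the pigeonhole partition of \emph{all} the large bumps into two families, with $j_0$ and $k_0$ on opposite sides and every cross-pair of caps far apart, so that the ``pure'' contribution is a single $(1-\epsilon^2)^{1/2}$ and the cross terms are uniformly small.
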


\noindent We provide one proof which follows the proof of \cite[Lemma 9.2]{CS} more closely:
\begin{proof}
It suffices to prove this for all sufficiently small $\epsilon>0$. Let $f$ be a nonnegative $L^2$ function which satisfies $\|f\|_2=1$ and is $\delta$-near extremizer for a sufficiently small $\delta=\delta(\epsilon)$, and let $\{f_n,G_n\}$ be associated to $f$ via the decomposition algorithm. Set $F=\sum_{n=0}^N f_n$.

Suppose that $\|f_{j_0}\|_2\geq\epsilon$ and $\|f_{k_0}\|_2\geq\epsilon$. Let $N$ be the smallest integer such that $\|G_{N+1}\|_2<\epsilon^3$. Since $\|G_n\|_2$ is a nonincreasing function of $n$, and since $\|f_n\|_2\leq \|G_n\|_2$, necessarily $j_0,k_0\leq N$. Moreover, by Lemma \ref{decalg3}, there exists $M_\epsilon<\infty$ depending {\em only} on $\epsilon$ such that $N\leq M_\epsilon$. By Lemma \ref{decalg4}, if $\delta$ is chosen to be a sufficiently small function of $\epsilon$ then since $\|G_n\|_2\geq\epsilon^3$ for all $n\leq N$, $f_n\leq\theta(\epsilon) |\C_n|^{-1/2}\chi_{\C_n}$ for all such $n$, where $\theta$ is a continuous, strictly positive function on $(0,1]$.

Now let $\lambda<\infty$ be a large quantity to be specified. It suffices to show that if $\delta(\epsilon)$ is sufficiently small, an assumption that $d(\C_j,\C_k)>\lambda$ implies an upper bound, which depends only on $\epsilon$, for $\lambda$.

As proved in \cite[Lemma 9.1]{CS}, there exists a decomposition $F=F_1+F_2=\sum_{n\in S_1} f_n+\sum_{n\in S_2} f_n$ where $[0,N]=S_1\cup S_2$ is a partition of $[0,N]$, $j_0\in S_1$, $k_0\in S_2$, and $d(\C_j,\C_k)\geq\lambda/2N\geq\lambda/2M_\epsilon$ for all $j\in S_1$ and $k\in S_2$. Certainly $\|F_1\|_2\geq \|f_{j_0}\|_2\geq\epsilon$, and similarly $\|F_2\|_2\geq\epsilon$. One of the cross term satisfies

\begin{align*}
\|F_1\sigma\ast F_1\sigma\ast F_2\sigma\|_2&\leq \sum_{i\in S_1}\sum_{j\in S_1}\sum_{k\in S_2} \|f_i\sigma\ast f_j\sigma\ast f_k\sigma\|_2\\
&\leq\theta(\epsilon)^3 \sum_{i\in S_1}\sum_{j\in S_1}\sum_{k\in S_2} |\C_i|^{-1/2}|\C_j|^{-1/2}|\C_k|^{-1/2}\|\chi_{\C_i}\sigma\ast\chi_{\C_j}\sigma\ast\chi_{\C_k}\sigma\|_2\\
&\leq M_\epsilon^3\gamma(\lambda/2M_\epsilon)\theta(\epsilon)^3
\end{align*}
where $\gamma(\lambda)\rightarrow 0$ as $\lambda\rightarrow\infty$ by Corollary \ref{triwidc}.
The other cross term $F_1\sigma\ast F_2\sigma\ast F_2\sigma$ can be estimated in an identical way. It follows that

\begin{align*}
\|F\sigma\ast F\sigma\ast F\sigma\|_2&\leq\|F_1\sigma\ast F_1\sigma\ast F_1\sigma\|_2+\|F_2\sigma\ast F_2\sigma\ast F_2\sigma\|_2+3\|F_1\sigma\ast F_1\sigma\ast F_2\sigma\|_2+\\
&+3\|F_1\sigma\ast F_2\sigma\ast F_2\sigma\|_2\\
&\leq (2\pi)^{-1}{\mathbf C}[\Gamma]^3(\|F_1\|_2^3+\|F_2\|_2^3)+6M_\epsilon^3\gamma(\lambda/2M_\epsilon)\theta(\epsilon)^3.
\end{align*}
Since $F_1$ and $F_2$ have disjoint supports, $\|F_1\|_2^2+\|F_2\|_2^2\leq\|f\|_2^2=1$ and consequently
$$\|F_1\|_2^3+\|F_2\|_2^3\leq\max(\|F_1\|_2,\|F_2\|_2)\cdot (\|F_1\|_2^2+\|F_2\|_2^2)\leq (1-\epsilon^2)^{1/2}\cdot 1\leq (1-\epsilon^2)^{1/2}.$$
Thus
$$\|F\sigma\ast F\sigma\ast F\sigma\|_2\leq (2\pi)^{-1}{\mathbf C}[\Gamma]^3(1-\epsilon^2)^{1/2}+6M_\epsilon^3\gamma(\lambda/2M_\epsilon)\theta(\epsilon)^3.$$
Since
\begin{align*}
(2\pi)^{-1}{\mathbf C}[\Gamma]^3 (1-\delta)^3\leq \|f\sigma\ast f\sigma\ast f\sigma\|_2&\leq \|F\sigma\ast F\sigma\ast F\sigma\|_2+C\|F\|_2^2\|G_{N+1}\|_2\\
&\leq \|F\sigma\ast F\sigma\ast F\sigma\|_2+C\epsilon^3,
\end{align*}
by transitivity we have that

$$(2\pi)^{-1}{\mathbf C}[\Gamma]^3 (1-\delta)^3\leq C\epsilon^3+(2\pi)^{-1}{\mathbf C}[\Gamma]^3(1-\epsilon^2)^{1/2}+6M_\epsilon^3\gamma(\lambda/2M_\epsilon)\theta(\epsilon)^3.$$

\noindent Since $\gamma(t)\rightarrow 0$ as $t\rightarrow \infty$, this implies, for all sufficiently small $\epsilon>0$, an upper bound for $\lambda$ which depends only on $\epsilon$, as was to be proved.
\end{proof}

\section{Upper bounds for extremizing sequences}\label{sec:Upper bounds for extremizing sequences}

The decomposition algorithm and Lemma \ref{geoppty} allow us to prove that any near extremizer satisfies appropriate scaled upper bounds with respect to some cap. First we need a definition:

\begin{definition}
Let $\Theta:[1,\infty)\rightarrow (0,\infty)$ satisfy $\Theta(R)\rightarrow 0$ as $R\rightarrow\infty$.
A function $f\in L^2(\sigma)$ is said to be {\em upper normalized} (with gauge function $\Theta$) with respect to a cap $\C=\C(\gamma(s_0),r_0)\subset\Gamma$ of radius $r_0$ and center $\gamma(s_0)$ if
\begin{align}
&\|f\|_{L^2(\sigma)}\leq C<\infty,\label{un1}\\
&\int_{\{s: |f(\gamma(s))|\geq R r_0^{-1/2}\}}|f(\gamma(s))|^2ds\leq\Theta(R),\;\;\forall R\geq 1,\label{un2}\\
&\int_{\{s: |s-s_0|\geq Rr_0\}} |f(\gamma(s))|^2ds\leq\Theta(R),\;\;\forall R\geq 1\label{un3}.
\end{align} 
\end{definition}

\begin{proposition}\label{upperbounds}
There exists a function $\Theta:[1,\infty)\rightarrow (0,\infty)$ satisfying $\Theta(R)\rightarrow 0$ as $R\rightarrow\infty$ with the following property. For every $\epsilon>0$, there exist a cap $\C\subset\Gamma$ and a threshold  $\delta>0$ such that any nonnegative $f\in L^2(\sigma)$ which is a $\delta$-near extremizer with $\|f\|_2=1$ may be decomposed as $f=F+G$, where:
\begin{align}
&G,F\geq 0\textrm{ have disjoint supports},\label{bds1}\\
&\|G\|_2<\epsilon,\label{bds2}\\
&F\textrm{ is upper normalized with respect to }\C.\label{bds3}
\end{align}
\end{proposition}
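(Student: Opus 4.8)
The plan is to feed the decomposition algorithm output into Lemma~\ref{geoppty}, and to build $F$ out of those summands $f_n$ whose $L^2$-mass is non-negligible, all of which will then be associated to caps lying in a bounded hyperbolic ball. First I would fix $\epsilon>0$ and choose an auxiliary threshold $\epsilon'=\epsilon'(\epsilon)>0$ small (to be specified), apply the decomposition algorithm to the normalized nonnegative $\delta$-near extremizer $f$, and let $S=\{n: \|f_n\|_2\geq\epsilon'\}$ be the set of ``significant'' indices. By Lemma~\ref{decalg3} applied with $\delta$ small enough that $N(\delta)$ is large, together with the fact that the $f_n$ are pairwise orthogonal with $\sum_n\|f_n\|_2^2\leq 1$, the set $S$ is finite with cardinality bounded by a constant $M_{\epsilon'}$ depending only on $\epsilon'$, and moreover the tail $\sum_{n\notin S}\|f_n\|_2^2$ is small in a quantitative way once one also uses that $\epsilon_n^\star\to 0$ forces $\|G_n\|_2\to 0$. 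Set $F:=\sum_{n\in S} f_n$ and $G:=f-F=\sum_{n\notin S} f_n + G_\infty$, where $G_\infty$ is the $L^2$-limit of $G_N$ (zero if the algorithm terminates). Then \eqref{bds1} holds because the $f_n$ have pairwise disjoint supports by construction, and \eqref{bds2} follows by choosing $\epsilon'$ and $\delta$ small enough: the point is that significant-index summands carry all but $\epsilon^2$ of the mass, which is where I would need a careful bookkeeping argument showing $\|G\|_2^2\leq\sum_{n\notin S}\|f_n\|_2^2+o(1)$ can be made $<\epsilon^2$.

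The substance is \eqref{bds3}. For this I apply Lemma~\ref{geoppty} with parameter $\epsilon'$: there exist $\delta$ and $\lambda<\infty$ such that $d(\C_j,\C_k)\leq\lambda$ for all $j,k\in S$. Fix one reference index, say the smallest $n_0\in S$, and let $\C:=\C_{n_0}=\C(\gamma(s_0),r_0)$. Since every $\C_n$, $n\in S$, lies within hyperbolic distance $\lambda$ of $\C$, formula \eqref{hypdist} gives that the radii are comparable, $r_0\asymp_\lambda |\C_n|$, and the centers satisfy $|s_n-s_0|\lesssim_\lambda r_0$; equivalently $\C_n\subset\C(\gamma(s_0),Cr_0)$ for a constant $C=C(\lambda)$. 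Now I verify the three conditions defining upper normalization. Condition \eqref{un1} is immediate since $\|F\|_2\leq\|f\|_2=1$. For the support/size bound \eqref{un3}: any point in the support of $F$ lies in some $\C_n$ with $n\in S$, hence in $\C(\gamma(s_0),Cr_0)$, so the integrand in \eqref{un3} vanishes identically once $R>C$, and for $1\leq R\leq C$ one bounds the integral by $\|F\|_2^2\leq 1$; thus $\Theta(R)$ can be taken to equal $1$ on $[1,C]$ and $0$ beyond, or better, smoothed. For the height bound \eqref{un2}: by Lemma~\ref{decalg4}(i) (valid once $\delta\leq\delta_{\epsilon'}$, using $\|G_n\|_2\geq\|f_n\|_2\geq\epsilon'$ for $n\in S$) each $f_n\leq C_{\epsilon'}|\C_n|^{-1/2}\chi_{\C_n}\leq C_{\epsilon'}'\,r_0^{-1/2}\chi_{\C_n}$, so $F\leq\sum_{n\in S} f_n\leq M_{\epsilon'}C_{\epsilon'}'\,r_0^{-1/2}$ pointwise; hence the set $\{s:|F(\gamma(s))|\geq R r_0^{-1/2}\}$ is empty as soon as $R>M_{\epsilon'}C_{\epsilon'}'$, and for smaller $R$ one again uses $\|F\|_2^2\leq 1$. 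Taking $\Theta$ to be any fixed decreasing function dominating these three piecewise bounds (note $M_{\epsilon'},C_{\epsilon'}',C(\lambda)$ all ultimately depend only on $\epsilon$) and then enlarging it once and for all to be independent of $\epsilon$ — which is legitimate because for the final $F$ the truncation levels only \emph{decrease} the relevant integrals — yields a single gauge function $\Theta$ as required.

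The main obstacle I anticipate is the interplay between the two small parameters: $\Theta$ must be chosen \emph{before} $\epsilon$, yet the natural bounds on the truncation thresholds $R$ above which the integrals vanish depend on $\epsilon$ (through $M_{\epsilon'}$, $C_{\epsilon'}$, $\lambda$). The resolution is the standard one: since on $[1,R_\epsilon]$ the integrals are trivially $\leq 1$ and beyond $R_\epsilon$ they vanish, the function $\Theta_\epsilon(R)=\chi_{[1,R_\epsilon]}(R)$ works for each $\epsilon$; but we want one $\Theta$. One fixes $\Theta$ to be, say, $\Theta(R)=R^{-1}$ (or any fixed function $\to 0$), then observes that for this to be consistent we must instead shrink $\epsilon'$ relative to $\epsilon$ so that the \emph{genuinely relevant} estimate is not the crude ``$\leq 1$'' but rather a quantitative decay coming from orthogonality: $\int_{\{|F|\geq Rr_0^{-1/2}\}}|F|^2\,ds\leq R^{-1}r_0^{1/2}\int|F|^{?}$ — here one should instead argue that $F$ is a sum of at most $M_{\epsilon'}$ pieces each bounded by $C_{\epsilon'}r_0^{-1/2}$ on a set of measure $\lesssim r_0$, so actually $\|F\|_\infty^2\cdot|\mathrm{supp}\,F|\lesssim_\epsilon 1$ and Chebyshev gives decay in $R$ with an $\epsilon$-dependent constant, which one absorbs by finally choosing $\delta$ (hence the admissible near-extremizers) small enough relative to $\epsilon$ after $\Theta$ is fixed. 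The cleanest route, which I would adopt, is to first prove the proposition with an $\epsilon$-dependent gauge $\Theta_\epsilon$, and then remark that since we only ever apply it for a sequence $\epsilon\to 0$ we may, by a diagonal choice, replace the family $\{\Theta_\epsilon\}$ by a single majorant $\Theta$ that still tends to $0$; this sidesteps the quantification issue entirely and is the step requiring the most care in writing up.
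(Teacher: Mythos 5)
Your overall plan --- run the decomposition algorithm, keep the ``significant'' summands, invoke Lemma~\ref{geoppty} to place the associated caps in a bounded hyperbolic ball, and call that sum $F$ --- is the right one, and your verification of \eqref{bds1}, \eqref{bds2} and of condition \eqref{un1} is fine. But the gauge-function bookkeeping, which you rightly flag as the delicate step, is where the argument breaks, and the fix you propose does not work.

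The problem is this. With a \emph{single} significance threshold $\epsilon'=\epsilon'(\epsilon)$, your $F=\sum_{n\in S}f_n$ satisfies $\|F\|_\infty\lesssim_{\epsilon'} r_0^{-1/2}$ and $\mathrm{supp}\,F\subset\C(\gamma(s_0),C(\lambda_{\epsilon'})r_0)$, so the two integrals in \eqref{un2}--\eqref{un3} vanish only for $R>R_\epsilon$ with $R_\epsilon=\max\bigl(M_{\epsilon'}C'_{\epsilon'},\,C(\lambda_{\epsilon'})\bigr)\to\infty$ as $\epsilon\to 0$; for $R\leq R_\epsilon$ you have only the trivial bound $\leq 1$. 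The resulting admissible gauge for your particular $F$ is essentially $\chi_{[1,R_\epsilon]}$. A single $\Theta$ dominating $\{\chi_{[1,R_\epsilon]}\}_{\epsilon>0}$ with $R_\epsilon\to\infty$ must be identically $\geq 1$, so it cannot tend to $0$. The ``diagonal choice'' you invoke does not resolve this: diagonalization produces a single function out of a countable family only when there is some monotone structure that lets you pass to a subsequence while preserving the estimates, and here the estimates worsen (in the sense that the interval of triviality grows) as $\epsilon\to 0$. Likewise, ``choosing $\delta$ small after $\Theta$ is fixed'' only shrinks the admissible class of $f$; it does not retroactively improve the $\epsilon$-dependent constants $M_{\epsilon'},C_{\epsilon'},\lambda_{\epsilon'}$ that your bound on the sublevel/tail integrals carries. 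In short, the order of quantifiers ($\Theta$ before $\epsilon$) genuinely forces the gauge to be established by an argument whose decay in $R$ is $\epsilon$-uniform, and your construction does not supply one.

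The paper's mechanism, which you should compare, avoids this by making the truncation \emph{$R$-dependent} rather than fixed. One sets $F=\sum_{n\leq M}f_n$ with $\|f-F\|_2<\epsilon/2$ (so $F$ carries essentially all the mass, not just the ``significant'' part), and then, for each $R$, introduces an auxiliary decay rate $\eta(R)\to 0$ and a corresponding cutoff $N=N(\eta(R))$ with $\F=\F_{N}=\sum_{k\leq N}f_k$. Lemma~\ref{decalg4}(ii) gives $\|F-\F\|_2\leq\zeta(\eta)$ with $\zeta(\eta)\to 0$, \emph{uniformly in} $\epsilon,\bar R$; Lemma~\ref{geoppty} (applied at level $\eta$) pins the caps $\C_0,\ldots,\C_N$ within hyperbolic radius $\lambda(\eta)$ of $\C_0$, and one then \emph{chooses} $\eta(R)$ to decay slowly enough that $\F$ vanishes on $\{|s-s_0|\geq Rr_0\}$ and satisfies $\|\F\|_\infty<Rr_0^{-1/2}$. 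On those two sets $F=F-\F$, so the integrals in \eqref{un2}--\eqref{un3} are $\leq\|F-\F\|_2^2\leq\zeta(\eta(R))^2=:\Theta(R)\to 0$. This is precisely the quantitative, $\epsilon$-uniform decay in $R$ that your single-threshold $F$ cannot produce; the gauge $\Theta$ is determined by the universal functions $\zeta,\rho,\lambda$ from Lemmas~\ref{decalg4} and~\ref{geoppty}, not by $\epsilon$. (The paper also routes the argument through the restricted range $R\in[1,\bar R]$ of Lemma~\ref{upperboundsR} to keep $\delta$ well-defined, and then upgrades to all $R\geq 1$ by the argument in \cite[p.\,26]{CS}; your write-up does not address this second reduction either, but it is a lesser point than the gauge issue above.)
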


\noindent Proposition \ref{upperbounds} is actually equivalent to the following superficially weaker statement:

\begin{lemma}\label{upperboundsR}
There exists a function $\Theta:[1,\infty)\rightarrow (0,\infty)$ satisfying $\Theta(R)\rightarrow 0$ as $R\rightarrow\infty$ with the following property. For every $\epsilon>0$ and $\bar{R}\geq 1$, there exist a cap $\C=\C(s_0,r_0)$ and a threshold $\delta>0$ such that any nonnegative $f\in L^2(\sigma)$ which is a $\delta$-near extremizer with $\|f\|_2=1$ may be decomposed as $f=F+G$, where:
\begin{align}
&G,F\geq 0\textrm{ have disjoint supports},\label{bdsR1}\\
&\|G\|_2<\epsilon,\label{bdsR2}\\
&\int_{\{s: F(\gamma(s))\geq R r_0^{-1/2}\}}F(\gamma(s))^2ds,\int_{\{s: |s-s_0|\geq Rr_0\}} F(\gamma(s))^2ds\leq\Theta(R),\;\; \forall R\in[1,\bar{R}].\label{bdsR3}
\end{align}
\end{lemma}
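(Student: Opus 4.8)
The plan is to distil a single cap out of the decomposition algorithm, keep the part of $f$ that is concentrated there and has bounded amplitude as the function $F$, and dump the far‑away and high‑amplitude residue into $G$. The geometric lemma (Lemma \ref{geoppty}) is what makes "a single cap" possible, while the cap estimate (Proposition \ref{capestimate}) together with the near‑extremizer principle (Lemma \ref{nearextremizers}) is what keeps $\|G\|_2$ small.

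Fix $\epsilon>0$ and $\bar R\geq1$, and introduce an auxiliary threshold $\epsilon_\star>0$ together with the near‑extremizer parameter $\delta>0$, both to be calibrated at the end in terms of $\epsilon$ and $\bar R$. Given a nonnegative $\delta$‑near extremizer $f$ with $\|f\|_2=1$, run the decomposition algorithm and let $N$ be the first index with $\|G_{N+1}\|_2<\epsilon_\star$; by Lemma \ref{decalg3} (and taking $\delta$ small) we have $N\leq M_{\epsilon_\star}$ for a finite $M_{\epsilon_\star}$ depending only on $\epsilon_\star$. For $n\leq N$ we have $\|G_n\|_2\geq\epsilon_\star$, so Lemma \ref{decalg4} gives $f_n\leq C_{\epsilon_\star}|\C_n|^{-1/2}\chi_{\C_n}$ and $\|f_n\|_2\geq\delta_{\epsilon_\star}$ once $\delta\leq\delta_{\epsilon_\star}$, and Lemma \ref{geoppty} forces $d(\C_j,\C_k)\leq\lambda_\star$ for all $j,k\leq N$, where $\lambda_\star$ depends only on $\epsilon_\star$. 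By the explicit form \eqref{hypdist} of the hyperbolic distance this means the caps $\C_n$ ($n\leq N$) have comparable radii and nearby centers; letting $\C_{n^\ast}$ be the one of largest radius $r_\ast$, there is a fixed dilate $\C=\C(\gamma(s_{n^\ast}),r_0)$ with $r_0\asymp_{\lambda_\star}r_\ast$ that contains every $\C_n$, $n\leq N$, with $r_n\asymp_{\lambda_\star}r_0$ for all such $n$. Set $s_0:=s_{n^\ast}$ and $F_\circ:=\sum_{n\leq N}f_n$; then $F_\circ$ is supported on $\C$ and obeys $F_\circ\leq C_1 r_0^{-1/2}$ for some $C_1=C_1(\epsilon_\star)$.

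Now define $F:=F_\circ\chi_{\{F_\circ\leq\bar R r_0^{-1/2}\}}$, $h:=F_\circ-F$, and $G:=G_{N+1}+h$, so that $F,G\geq0$ have disjoint supports and $F$ is supported on $\C$ with $F\leq\bar R r_0^{-1/2}$. The support half of \eqref{bdsR3} is immediate, since $\int_{\{|s-s_0|\geq Rr_0\}}F^2=0$ for every $R\geq1$. The amplitude half is trivial for $R>\bar R$ (the set is null), and for $1\leq R\leq\bar R$ it reduces, via Chebyshev together with $F_\circ\leq C_1 r_0^{-1/2}$ and $\|F_\circ\|_2\leq1$, to a bound of the form $\int_{\{F_\circ\geq Rr_0^{-1/2}\}}F_\circ^2\lesssim\min(1,C_1/R)$; one then picks $\Theta$ of the shape $\Theta(R)=A_0 R^{-1/2}$ with $A_0$ absolute and arranges $\epsilon_\star$ to be a suitable negative power of $\bar R$ so that $C_1(\epsilon_\star)$ is controlled by $\bar R$. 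For the error, first $h\leq F_\circ\leq C_1 r_0^{-1/2}$ while $h\geq\bar R r_0^{-1/2}$ on its support, so $\|h\|_1\lesssim\bar R^{-1}r_0^{1/2}\|h\|_2^2$; feeding these into Proposition \ref{capestimate} gives $\|\widehat{h\sigma}\|_6\lesssim\|h\|_2^{1-\beta/2}(C_1/\bar R)^{\beta/4}$, whence Lemma \ref{nearextremizers} applied to $f=(f-h)+h$ yields $\|h\|_2\lesssim\max(\delta^{1/2},(C_1/\bar R)^{\beta/8})$. For $G_{N+1}$ one uses $\epsilon_n^\star\to0$ along the algorithm (Lemma \ref{decalg1}): running a few further steps to an index $m>N$ with $\|\widehat{G_m\sigma}\|_6$ as small as we wish, Lemma \ref{nearextremizers} gives $\|G_m\|_2$ arbitrarily small, and the finitely many extra summands $f_{N+1},\dots,f_{m-1}$ (total $L^2$ mass $<\epsilon_\star$) are either below a secondary threshold and discarded into $G$, or — using Lemma \ref{geoppty} once more — have caps still clustered near $\C$ and are incorporated there. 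Choosing the parameters in the order "$\bar R$, then $\epsilon_\star$, then the secondary threshold, then $\delta$" makes $\|G\|_2<\epsilon$.

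The main obstacle is exactly the amplitude bound in \eqref{bdsR3} with a single universal gauge: the retained part $F$ must have amplitude controlled on the scale $r_0^{-1/2}$ by a quantity depending only on $\bar R$, yet $\|G\|_2$ must be forced below an arbitrarily small $\epsilon$, and these two demands pull the decomposition threshold in opposite directions. Resolving the tension is what compels the two‑tier choice of thresholds, and the decisive point is that the pieces which cannot be kept in $F$ — the far‑away residue and the high‑amplitude tail — automatically carry negligible $L^6$ extension norm by the cap estimate, hence (through Lemma \ref{nearextremizers}) negligible $L^2$ mass, so they may be absorbed into $G$ at no cost.
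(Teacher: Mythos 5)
Your overall architecture (run the decomposition algorithm, use Lemma \ref{geoppty} to cluster the caps $\C_n$ around a single cap $\C$, and use Lemma \ref{nearextremizers} to absorb discarded pieces into $G$) matches the paper's, but there is a genuine gap in the amplitude half of \eqref{bdsR3}. You truncate once, at height $\bar R\, r_0^{-1/2}$, and then try to obtain the bound for all intermediate $R\in[1,\bar R]$ from Chebyshev together with the uniform bound $F_\circ\leq C_1(\epsilon_\star)r_0^{-1/2}$. This yields at best
\begin{equation*}
\int_{\{F_\circ\geq Rr_0^{-1/2}\}}F_\circ^2\,ds\;\leq\;\min\bigl(1,\;C_1(\epsilon_\star)^2R^{-2}\bigr),
\end{equation*}
which is \emph{not} dominated by any universal gauge $\Theta(R)\rightarrow 0$: for every $R\leq C_1(\epsilon_\star)$ the right-hand side is simply $1$, and $C_1(\epsilon_\star)\rightarrow\infty$ as $\epsilon_\star\rightarrow 0$, while $\epsilon_\star$ must be taken small (in terms of $\epsilon$, not only of $\bar R$) to force $\|G\|_2<\epsilon$. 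Your proposed calibration --- take $\epsilon_\star$ a negative power of $\bar R$ so that $C_1(\epsilon_\star)$ is ``controlled by $\bar R$'' --- does not repair this, because $\Theta$ must be fixed \emph{before} $\epsilon$ and $\bar R$ are given; a gauge whose constant depends on $\bar R$ (or on $\epsilon$) is precisely what the statement forbids, and this uniformity is what makes Proposition \ref{upperbounds} and then Proposition \ref{concentrationcompactness} usable along an extremizing sequence.

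The missing idea is to let the truncation point depend on $R$ itself. The paper fixes a slowly decaying function $R\mapsto\eta(R)$, independent of $\epsilon$ and $\bar R$, and for each $R$ splits $F=\F+(F-\F)$ where $\F=\sum_{k\leq N}f_k$ and $N=N(R)$ is determined by the condition $\|f_{N+1}\|_2<\eta(R)$. The kept part $\F$ satisfies the support and amplitude constraints at level $R$ \emph{exactly} (both integrals in \eqref{bdsR3} vanish for $\F$), because $\eta(R)$ tends to zero slowly enough that $C(\eta(R))$ and $\cosh\lambda(\eta(R))$ stay below $R$; the discarded tail satisfies $\|F-\F\|_2\leq\|G_{N+1}\|_2<\zeta(\eta(R))$ by Lemma \ref{decalg4}, and one sets $\Theta(R):=\zeta(\eta(R))^2$, a universal gauge. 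The parameter $\bar R$ enters only in choosing $\delta$ uniformly over the compact range $R\in[1,\bar R]$. A single Chebyshev truncation cannot reproduce this $R$-decay, so the step as you wrote it fails.
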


\noindent Proof that Lemma \ref{upperboundsR} implies Proposition \ref{upperbounds} is the exactly the same as in \cite[p. 26]{CS} and so we do not include it here. 

\begin{proof}[Proof of Lemma \ref{upperboundsR}]
Let $\eta:[1,\infty)\rightarrow (0,\infty)$ be a function to be chosen below, satisfying $\eta(t)\rightarrow 0$ as $t\rightarrow\infty$. This function will not depend on the quantity $\bar{R}$.

Let $\bar{R}\geq 1$, $R\in [1,\bar{R}]$, and $\epsilon>0$ be given. Let $\delta=\delta(\epsilon,\bar{R})>0$ be a small quantity to be chosen below. Let $0\leq f\in L^2(\sigma)$ be $\delta$-near extremizer, with $\|f\|_2=1$.

Let $\{f_n\}$ be the sequence of functions obtained by applying the decomposition algorithm to $f$. Choose $\delta=\delta(\epsilon)>0$ sufficiently small and $M=M(\epsilon)$ sufficiently large to guarantee that $\|G_{M+1}\|_2<\epsilon/2$ and that $f_n, G_n$ satisfy all the conclusions of Lemma \ref{decalg4} and Lemma \ref{decalg3} for $n\leq M$. Set $F=\sum_{n=0}^M f_n$. Then $\|f-F\|_2=\|G_{M+1}\|_2<\epsilon/2$.

Let $N\in\N_0$ be the minimum of $M$, and the smallest number such that $\|f_{N+1}\|_2<\eta.$ $N$ is majorized by a quantity which depends only on $\eta$. Set $\F=\F_N=\sum_{k=0}^N f_k$. It follows from part $(ii)$ of Lemma \ref{decalg4} that

\begin{equation}\label{boundF-F}
\|F-\F\|_2\leq\|G_{N+1}\|_2<\zeta(\eta)\textrm{ where } \zeta(\eta)\rightarrow 0\textrm{ as }\eta\rightarrow 0.
\end{equation}
This function $\zeta$ is independent of $\epsilon$ and $\bar{R}$.

To prove the lemma, we must produce an appropriate cap $\C=\C(s_0,r_0)$, and must establish the existence of $\Theta$. To do the former is simple: to $f_0$ is associated a cap $\C_0$ such that $f_0\leq C|\C_0|^{-1/2}\chi_{\C_0}$. Then $\C=\C_0$ is the required cap. Note that, by Lemma \ref{L1lowerbound}, $\|f_0\|_1\geq c$ for some positive universal constant $c$.

Suppose that functions $R\mapsto \eta(R)$ and $R\mapsto\Theta(R)$ are chosen so that

\begin{align}
&\eta(R)\rightarrow 0\textrm{ as }R\rightarrow\infty\\
&\zeta(\eta(R))^2\leq\Theta(R)\textrm{ for all }R.
\end{align}

\noindent Then, by \eqref{boundF-F}, $F-\F$ already satisfies the desired inequalities in $L^2(\sigma)$, so it suffices to show that $\F(\gamma(s))\equiv 0$ whenever $|s-s_0|\geq Rr_0$, and that $\|\F\|_\infty< Rr_0^{-1/2}$.

Each summand of $\F$ satisfies $f_k\leq C(\eta)|\C_k|^{-1/2}\chi_{\C_k}$ where $C(\eta)<\infty$ depends only on $\eta$, and in particular, $f_k$ is supported in $\C_k$. $\|f_k\|_2\geq\eta$ for all $k\leq N$, by definition of $N$. Therefore by Lemma \ref{geoppty}, there exists a function $\eta\mapsto\lambda(\eta)<\infty$, such that if $\delta$ is sufficiently small as a function of $\eta$ then $d(\C_k,\C_0)\leq\lambda(\eta)$ for every $k\leq N$. This is needed for $\eta=\eta(R)$ for all $R$ in the compact set $[1,\bar{R}]$, so such a $\delta$ may be chosen as a function of $\bar{R}$ alone; conditions already imposed on $\delta$ above make it a function of both $\epsilon$ and $\bar{R}$. 

In the region of all $\gamma(s)\in\Gamma$ such that $|s-s_0|\geq Rr_0$, either $f_k\equiv 0$, or $\C_k$ has radius $\geq \frac{1}{2}Rr_0$, or the center $\gamma(s_k)$ of $\C_k$ is such that $|s_k-s_0|\geq \frac{1}{2}Rr_0$. Choose a function $R\mapsto\eta(R)$ which tends to 0 sufficiently slowly that the latter two cases would contradict the inequality $d(\C_k,\C_0)\leq\lambda$, and therefore cannot arise. Then $\F(\gamma(s))\equiv 0$ when $|s-s_0|\geq Rr_0$.

With the function $\eta$ specified, $\Theta$ can be defined by
$\Theta(R):=\zeta(\eta(R))^2.$
Then
\begin{equation}\label{req1}
\int_{\{s: |s-s_0|\geq Rr_0\}} F(\gamma(s))^2ds\leq\Theta(R),\;\; \forall R\in[1,\bar{R}].
\end{equation}

We claim next that $\|\F\|_\infty<Rr_0^{-1/2}$ if $R$ is sufficiently large as a function of $\eta$. Indeed, because the summands $f_k$ have pairwise disjoint supports, it suffices to control $\max_{k\leq N}\|f_k\|_\infty$. Again by Lemma \ref{decalg4}, $\|f_k\|_\infty\leq C(\eta)|\C_k|^{-1/2}$. If $\eta(R)$ is chosen to tend to zero sufficiently slowly as $R\rightarrow\infty$ to ensure that $2C(\eta(R))\cosh\lambda(\eta(R))<R$ for all $k\leq N$, then 
$$\|f_k\|_\infty<R(2\cosh\lambda(\eta(R)))^{-1}r_k^{-1/2}\leq Rr_0^{-1/2}$$
since $d(\C_k,\C_0)\leq\lambda(\eta(R))$.
It follows that
\begin{equation}\label{req2}
\int_{\{s: F(\gamma(s))\geq R r_0^{-1/2}\}}F(\gamma(s))^2ds\leq\Theta(R),\;\; \forall R\in[1,\bar{R}],
\end{equation}
provided that $\Theta$ is defined as above.

The final function $\eta$ must be chosen to tend to zero slowly enough to satisfy the requirements of the proofs of both \eqref{req1} and \eqref{req2}.
\end{proof}

\section{A concentration compactness result}\label{sec:A concentration compactness result}

Let us start by making precise the previously mentioned notions of {\em uniform integrability} and {\em concentration at a point}.

\begin{definition}
Let $(X,\mathcal{S},\mu)$ be a measure space and let $p\in[1,\infty)$. A subset $\mathcal{U}$ of $L^p(X)$ is called {\em uniformly integrable} of order $p$ if for every $\epsilon>0$ there exists $\delta>0$ such that for every measurable subset $A$ of $X$ for which $\mu(A)<\delta$,
$$\int_A |f|^p d\mu<\epsilon,\;\;\textrm{ for every } f\in\mathcal{U}.$$
\end{definition}

If $\mathcal{U}$ is a bounded subset of $L^p(X)$, it is straightforward to check that $\mathcal{U}$ is uniformly integrable of order $p$ if and only if
$$\lim_{R\rightarrow \infty}\int_{\{|f|>R\}} |f|^p d\mu=0$$
uniformly with respect to $f\in\mathcal{U}$.

If the measure space is finite, then it is well-known that uniform integrability coupled with a weaker form of convergence is enough to ensure strong convergence: 

\begin{proposition}\label{uniform integrability+measure}
Suppose $\mu(X)<\infty$, and let $p\in [1,\infty)$. Let $\{f_n\}$ be a sequence in $L^p(X)$ and let $f\in L^p(X)$. The sequence $\{f_n\}$ converges to $f$ in $L^p$ if (and only if) the following two conditions are satisfied:
\begin{itemize}
\item[(i)] The sequence $\{f_n\}$ converges in measure to $f$;
\item[(ii)] The family $\{f_n: n\in\N\}$ is uniformly integrable of order $p$. 
\end{itemize}
\end{proposition}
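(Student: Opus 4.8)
The statement to prove is the classical Vitali-type convergence theorem on a finite measure space, so the plan is to reduce it to a routine $\varepsilon/3$ argument once we isolate the three standard ingredients. Concretely, the forward implication (which is all the paper needs) proceeds as follows.

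\textbf{Step 1: Extract an a.e.-convergent subsequence.} Since $\{f_n\}$ converges in measure to $f$, every subsequence has a further subsequence converging to $f$ $\mu$-a.e. By Fatou's lemma applied along such a subsequence, $\|f\|_{L^p(X)}\le\liminf_n\|f_n\|_{L^p(X)}<\infty$, so $f\in L^p$ is automatic; moreover $\{f\}\cup\{f_n\}$ is uniformly integrable of order $p$ (adjoining a single $L^p$ function preserves uniform integrability on a finite measure space). It suffices to show that \emph{every} subsequence of $\{f_n\}$ has a further subsequence converging to $f$ in $L^p$; so we may as well pass to a subsequence, still denoted $\{f_n\}$, with $f_n\to f$ $\mu$-a.e.

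\textbf{Step 2: Split by a large cutoff $R$.} Fix $\varepsilon>0$. For $R\ge1$ write $|f_n-f|^p\le 2^{p-1}(|f_n|^p+|f|^p)$, and estimate
$$\int_X|f_n-f|^p\,d\mu=\int_{\{|f_n-f|\le R\}}|f_n-f|^p\,d\mu+\int_{\{|f_n-f|> R\}}|f_n-f|^p\,d\mu.$$
On the set $\{|f_n-f|>R\}$ we have the inclusion $\{|f_n-f|>R\}\subseteq\{|f_n|>R/2\}\cup\{|f|>R/2\}$, so by the uniform integrability of $\{f\}\cup\{f_n\}$ (in the $\{|g|>R\}$ formulation noted in the paper) the tail integral is $<\varepsilon/3$ for all $n$, provided $R=R(\varepsilon)$ is chosen large enough. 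Fix such an $R$.

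\textbf{Step 3: Handle the truncated part via Egorov / bounded convergence.} With $R$ fixed, the truncated integrand $|f_n-f|^p\chi_{\{|f_n-f|\le R\}}$ is bounded by $R^p$ and converges to $0$ $\mu$-a.e.; since $\mu(X)<\infty$, the bounded convergence theorem (or Egorov's theorem: choose $A\subset X$ with $\mu(X\setminus A)$ so small that $R^p\mu(X\setminus A)<\varepsilon/6$ and $f_n\to f$ uniformly on $A$) gives $\int_{\{|f_n-f|\le R\}}|f_n-f|^p\,d\mu<\varepsilon/3$ for all $n$ large. Combining the two estimates yields $\int_X|f_n-f|^p\,d\mu<\varepsilon$ eventually, proving $L^p$ convergence along this subsequence, hence (by the subsequence principle) for the original sequence. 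The reverse implication is immediate: $L^p$ convergence forces convergence in measure by Chebyshev, and a convergent sequence together with its limit is uniformly integrable because $\int_{\{|f_n|>R\}}|f_n|^p\le 2^{p-1}\big(\|f_n-f\|_p^p+\int_{\{|f_n|>R\}}|f|^p\big)$ with the first term small uniformly in $n$ for $n$ large (and the finitely many remaining $f_n$ handled individually), and the second term small uniformly by absolute continuity of the integral of $|f|^p$.

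There is no real obstacle here; the only point requiring a modicum of care is the bookkeeping in Step 2, namely verifying that uniform integrability of the \emph{sequence} transfers to uniform integrability of the \emph{differences} via the union bound on superlevel sets — and that adjoining the limit $f$ does not destroy uniform integrability, which is where the finiteness $\mu(X)<\infty$ is genuinely used. Since this is a standard result, I would either cite it or present the $\varepsilon/3$ argument compactly as above.
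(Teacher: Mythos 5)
Your proof is correct in outline, but it takes a genuinely different and somewhat more elaborate route than the paper. You pass to an a.e.-convergent subsequence (subsequence principle), split by a large cutoff $R$ in the functions, control the tail via uniform integrability, and finish the bounded piece via Egorov/bounded convergence, exactly as in the classical Vitali theorem. The paper's proof is shorter and avoids all of this machinery: it first observes that $\{f_n - f\}$ is itself uniformly integrable, then splits the integral at a threshold $\varepsilon$ on the \emph{difference} rather than at a cutoff $R$ on the \emph{functions}. On $\{|f_n - f| < \varepsilon\}$ the integral is trivially $\le \varepsilon^p\mu(X)$; on $\{|f_n - f|\ge\varepsilon\}$ the measure of the set is small for large $n$ by (i), and the small-measure formulation of (ii) does the rest. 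This keys directly on the convergence-in-measure hypothesis, needs no a.e. subsequence, no Egorov, and no large parameter $R$ to balance against $\varepsilon$. One minor imprecision in your Step 2 is worth flagging: the inclusion $\{|f_n - f| > R\} \subseteq \{|f_n| > R/2\} \cup \{|f| > R/2\}$ does not directly let you invoke only the tail ($\{|g| > R\}$) formulation, because cross terms such as $\int_{\{|f| > R/2\}} |f_n|^p\,d\mu$ are not tails of $f_n$. They are controlled by the small-measure formulation once you note, via Chebyshev and the $L^p$-boundedness that uniform integrability plus $\mu(X)<\infty$ gives, that $\mu(\{|f_n| > R/2\} \cup \{|f| > R/2\})$ is uniformly small for large $R$. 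This is fixable, but it is precisely the kind of bookkeeping the paper's splitting avoids.
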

\begin{proof} 
We prove only the direction that will be of use to us (the {\em if} part).
The assumptions, together with the fact that any family consisting of one single function is  automatically uniformly integrable, make it clear that the family $\{f_n-f: n\in\N\}$ is also uniformly integrable of order $p$. Given $\epsilon>0$,
$$\int_{\{|f-f_n|<\epsilon\}}|f-f_n|^p d\mu\leq\epsilon^p\mu(X).$$
On the other hand, $\mu(\{|f-f_n|\geq\epsilon\})\rightarrow 0$ as $n\rightarrow\infty$ because of the convergence in measure, and so
$$\lim_{n\rightarrow\infty}\int_{\{|f-f_n|\geq\epsilon\}}|f-f_n|^pd\mu=0$$
by definition of uniform integrability. The conclusion follows.
\end{proof}

\begin{definition}
For $p=\gamma(s_0)\in\Gamma$, we say that a sequence $\{f_n\}$ of functions in $L^2(\sigma)$ satisfying $\|f_n\|_2\rightarrow 1$ as $n\rightarrow \infty$ {\em concentrates at $p$} if for every $\epsilon, r>0$ there exists $N\in\N$ such that, for every $n\geq N$,
$$\int_{|s-s_0|\geq r} |f_n(\gamma(s))|^2 ds<\epsilon.$$
\end{definition}

These introductory remarks are relevant in the context of the following concentration-compactness result which is a consequence of Proposition \ref{upperbounds}:

\begin{proposition}\label{concentrationcompactness}
Let $\{f_n\}$ be an extremizing sequence for \eqref{TS} of nonnegative functions in $L^2(\sigma)$. Then there exists a subsequence, again denoted $\{f_n\}$, and a decomposition $f_n=F_n+G_n$ where $F_n$ and $G_n$ are nonnegative with disjoint supports, $\|G_n\|_2 \rightarrow 0$, and $\{F_n\}$ satisfies one of the two possibilities:
\begin{align}
& \{F_n: n\in\N\}\textrm{ is uniformly integrable of order }2\label{uniformly integrable}.\\
&\{F_n\} \textrm{ concentrates at a point of }\Gamma.\label{concentrates}
\end{align}
\end{proposition}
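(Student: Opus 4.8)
The plan is to extract, via Proposition \ref{upperbounds}, a decomposition $f_n = F_n + G_n$ for each $n$ in which $\|G_n\|_2$ is small and $F_n$ is upper normalized with respect to some cap $\C^{(n)} = \C(\gamma(s_0^{(n)}), r_0^{(n)})$, and then to dichotomize according to the asymptotic behavior of the sequence of radii $\{r_0^{(n)}\}$. More precisely, I would fix a sequence $\epsilon_k \downarrow 0$, apply Proposition \ref{upperbounds} with $\epsilon = \epsilon_k$ (using that an extremizing sequence is eventually a $\delta$-near extremizer for any prescribed $\delta$), and run a diagonal argument to produce a single subsequence of $\{f_n\}$, a single gauge function $\Theta$, a decomposition $f_n = F_n + G_n$ with $G_n, F_n \geq 0$ of disjoint supports, $\|G_n\|_2 \to 0$, and caps $\C^{(n)}$ such that each $F_n$ is upper normalized with gauge $\Theta$ relative to $\C^{(n)}$. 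Normalization can be arranged so that $\|f_n\|_2 = 1$ along the subsequence, hence $\|F_n\|_2 \to 1$.

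Next I would pass to a further subsequence so that $r_0^{(n)}$ converges in $[0, \infty]$ (here one can also first translate along the compact arc $\Gamma$ so that $s_0^{(n)} \to s_\infty$). There are two cases. If $\liminf_n r_0^{(n)} = 0$, i.e. after a subsequence $r_0^{(n)} \to 0$, then I claim $\{F_n\}$ concentrates at the point $p = \gamma(s_\infty)$: given $r > 0$, for $n$ large we have $r \geq R r_0^{(n)}$ with $R = R(n) \to \infty$, and then the upper-normalization bound \eqref{un3} gives $\int_{|s - s_0^{(n)}| \geq r} F_n(\gamma(s))^2\,ds \leq \Theta(R(n)) \to 0$; combined with $\|G_n\|_2 \to 0$ and the convergence $s_0^{(n)} \to s_\infty$, this yields concentration at $p$ (formally, one absorbs the small discrepancy $|s_0^{(n)} - s_\infty|$ into the radius, using that it is eventually $\leq r/2$). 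If on the other hand $\liminf_n r_0^{(n)} = c > 0$, then after a subsequence $r_0^{(n)} \geq c/2$ for all $n$, and I claim $\{F_n : n \in \N\}$ is uniformly integrable of order $2$: given $\epsilon > 0$, pick $R$ with $\Theta(R) < \epsilon$; then $F_n(\gamma(s)) \geq R r_0^{(n)\, -1/2}$ occurs only on a set where, by \eqref{un2}, the $L^2$ mass is $\leq \Theta(R) < \epsilon$, while on the complement $F_n \leq R (c/2)^{-1/2}$ is uniformly bounded, so on any measurable $A \subset \Gamma$ with $\sigma(A) < \epsilon R^{-2}(c/2)$ we get $\int_A F_n^2\,d\sigma < 2\epsilon$. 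This is exactly the characterization of uniform integrability on a finite measure space.

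The one genuine subtlety is the bookkeeping of the diagonal argument: Proposition \ref{upperbounds} produces, for each target $\epsilon$, a cap and a threshold $\delta$, but the cap and $\delta$ depend on $\epsilon$; one must choose, for each $k$, an index $n_k$ (increasing) so that $f_{n_k}$ is a $\delta(\epsilon_k)$-near extremizer and then apply the proposition at level $\epsilon_k$ to obtain $f_{n_k} = F_{n_k} + G_{n_k}$ with $\|G_{n_k}\|_2 < \epsilon_k$; relabeling this subsequence as $\{f_n\}$ gives $\|G_n\|_2 \to 0$ with every $F_n$ upper normalized with the \emph{same} universal gauge $\Theta$ furnished by the proposition. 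After that, extracting convergent subsequences of $\{r_0^{(n)}\}$ and $\{s_0^{(n)}\}$ in the compact spaces $[0,\infty]$ and $[0,\ell]$ is immediate, and the two cases above are routine. I do not expect any of the estimates themselves to be hard — the content is entirely in organizing the subsequence extraction so that a single decomposition works simultaneously and then reading off the dichotomy from the radii.
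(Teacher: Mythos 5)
Your proposal is correct and follows essentially the same route as the paper: apply Proposition \ref{upperbounds} (with a diagonal extraction over $\epsilon_k\downarrow 0$) to get a single decomposition with a uniform gauge $\Theta$, pass to subsequences so the cap radii and centers converge, and read off uniform integrability from \eqref{un2} when the radii stay bounded below and concentration from \eqref{un3} when they tend to zero. Your explicit treatment of the diagonal bookkeeping and the small-set characterization of uniform integrability are minor presentational differences, not a different argument.
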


\begin{proof}
Apply Proposition \ref{upperbounds} to each element of the sequence $\{f_n\}$ to get a decomposition $f_n=F_n+G_n$ where $F_n,G_n\geq 0$ have disjoint supports and $\|G_n\|_2\rightarrow 0$. For each $n$, there exists a cap $\C_n=\C(\gamma(s_n),r_n)$ such that $F_n$ is upper normalized with respect to $\C_n$. It is important to note that Proposition \ref{upperbounds}  yields a uniform statement in $n$ i.e. the gauge function $\Theta$ in conditions \eqref{un2} and \eqref{un3} can be chosen independently of $n$.
Let $r^*:=\lim\sup_{n\rightarrow\infty} r_n$. 

If $r^*>0$, let $\{r_{n_k}\}_k$ be a subsequence  which converges to $r^*$. Renaming, we may assume that $\lim_{n\rightarrow\infty} r_n=r^*$. 
Choosing $N\in\N$ large enough so that $n\geq N$ implies $r_n\geq r^*/4$, we have 
$$\int_{\{F_n>R\}} F_n(\gamma(s))^2ds\leq\int_{\{F_n\geq\frac{R\sqrt{r^*}}{2}r_n^{-1/2}\}}F_n(\gamma(s))^2ds\leq\Theta\Big(\frac{R\sqrt{r^*}}{2}\Big),$$
which tends to 0 as $R\rightarrow\infty$, uniformly in $n$. In other words, the sequence $\{F_n\}$ is uniformly integrable of order 2.
 
If $r^*=0$, choose a subsequence $\{s_{n_k}\}_k$ converging to some $s^*\in [0,\ell]$. Renaming, we may assume that $\lim_{n\rightarrow\infty} s_n=s^*$. 
Let $\epsilon,r>0$ be given. Start by choosing $N_1=N_1(r)$ such that $|s_n-s^*|\leq {r}/{2}$ if $n>N_1$. Then $|s-s^*|\geq r$ implies $|s-s_n|\geq r/2$ if $n>N_1$. Choose $R=R(\epsilon)$ such that $\Theta(R)<\epsilon$. Finally, choose $N_2=N_2(\epsilon,r)$ such that $$r_n\leq\frac{r}{2R}\textrm{ if }n>N_2.$$ 
Such $N_2$ exists since $\lim_{n\rightarrow\infty} r_n=0$. If $n>\max\{N_1,N_2\}$, we then have that
$$\int_{|s-s^*|\geq r} F_n(\gamma(s))^2 ds\leq\int_{|s-s_n|\geq \frac{r}{2}}F_n(\gamma(s))^2 ds\leq\int_{|s-s_n|\geq R r_n}F_n(\gamma(s))^2 ds\leq\Theta(R)<\epsilon$$ i.e. the sequence $\{F_n\}$ concentrates at $\gamma(s^*)$.
\end{proof}

Fanelli, Vega and Visciglia \cite{FVV1} proved the following interesting modification of a well-known result of Br\'{e}zis and Lieb \cite{BL} which does not require the a.e. pointwise convergence of the sequence of functions $\{h_n\}$.

\begin{proposition}[\cite{FVV1}]\label{FVV}
Let $\H$ be a Hilbert space and $T$ be a bounded linear operator from $\H$ to $L^p(\R^d)$, for some $p\in (2,\infty)$. Let $\{h_n\}\in\H$ be such that
\begin{itemize}
\item[(i)] $\|h_n\|_{\H}=1$;
\item[(ii)] $\lim_{n\rightarrow\infty} \|Th_n\|_{L^p(\R^d)}=\|T\|$;
\item[(iii)] $h_n\rightharpoonup h\neq 0$;
\item[(iv)] $T h_n\rightarrow Th$ a.e. in $\R^d$.
\end{itemize}
Then $h_n\rightarrow h$ in $\H$; in particular, $\|h\|_{\H}=1$ and $\|T h\|_{L^p(\R^d)}=\|T\|$.
\end{proposition}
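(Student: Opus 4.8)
The plan is to combine the classical Brézis--Lieb lemma --- applied at the level of $L^p(\R^d)$, where hypothesis (iv) supplies the a.e.\ convergence the lemma requires --- with an elementary convexity inequality that is available precisely because $p>2$. We may and do assume $\|T\|>0$ (otherwise the statement is false in general, and it is this nondegenerate case that is used in the paper). Write $h_n=h+r_n$. Weak convergence $h_n\rightharpoonup h$ gives $\langle h_n,h\rangle_{\H}\to\|h\|_{\H}^2$, so together with $\|h_n\|_{\H}=1$ we obtain $\|r_n\|_{\H}^2=\|h_n-h\|_{\H}^2=1-\|h\|_{\H}^2+o(1)$. Since the norm is weakly lower semicontinuous, $\|h\|_{\H}\le\liminf_n\|h_n\|_{\H}=1$; hence it suffices to prove $\|h\|_{\H}\ge 1$, as that forces $\|h\|_{\H}=1$ and therefore $\|h_n-h\|_{\H}\to 0$, which is the assertion.

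First I would apply the Brézis--Lieb lemma to the sequence $\{Th_n\}$, which is bounded in $L^p(\R^d)$ because $\|Th_n\|_p\le\|T\|\,\|h_n\|_{\H}=\|T\|$, and which converges a.e.\ to $Th$ by (iv). This yields
\[
\|Th_n\|_p^p=\|Th\|_p^p+\|Th_n-Th\|_p^p+o(1)=\|Th\|_p^p+\|Tr_n\|_p^p+o(1).
\]
Bounding $\|Th\|_p\le\|T\|\,\|h\|_{\H}$ and $\|Tr_n\|_p\le\|T\|\,\|r_n\|_{\H}$, and using hypothesis (ii) together with the identity for $\|r_n\|_{\H}^2$ above, I get, writing $a:=\|h\|_{\H}$,
\[
\|T\|^p=\lim_n\|Th_n\|_p^p\le\|T\|^p\bigl(a^p+(1-a^2)^{p/2}\bigr).
\]
Dividing by $\|T\|^p>0$ gives $1\le a^p+(1-a^2)^{p/2}$. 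Setting $t=a^2\in[0,1]$ and invoking strict convexity of $t\mapsto t^{p/2}$ (valid since $p/2>1$), the function $t\mapsto t^{p/2}+(1-t)^{p/2}$ lies strictly below its chord on $(0,1)$, i.e.\ it is $\le 1$ with equality only at $t\in\{0,1\}$. Since $a=\|h\|_{\H}>0$ by (iii), we conclude $t=1$, that is $\|h\|_{\H}=1$, which completes the argument as explained above.

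The only mildly delicate point is the legitimacy of invoking the classical Brézis--Lieb lemma: it needs a.e.\ convergence of the functions whose $L^p$ norms are being split, and this is exactly what (iv) provides for $\{Th_n\}$ --- the whole novelty of the proposition (relative to \cite{BL}) being that one need \emph{not} assume such convergence for $\{h_n\}$ itself. Everything else is routine bookkeeping with weak convergence in $\H$ and the scalar inequality $t^{p/2}+(1-t)^{p/2}\le 1$.
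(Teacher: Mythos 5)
Your proof is correct. The paper itself does not reproduce a proof of this proposition but simply cites \cite{FVV1}; your argument is in fact the standard one from that reference, namely applying the Br\'ezis--Lieb lemma to $\{Th_n\}$ (for which hypothesis (iv) supplies the needed a.e.\ convergence), combining the resulting orthogonality-type splitting $\|Th_n\|_p^p = \|Th\|_p^p + \|T(h_n-h)\|_p^p + o(1)$ with the Hilbert-space Pythagorean identity $\|h_n-h\|_{\H}^2 \to 1-\|h\|_{\H}^2$, and then exploiting the strict concavity (in $t=\|h\|_\H^2$) of $t\mapsto t^{p/2}+(1-t)^{p/2}$ for $p>2$ to force $\|h\|_{\H}=1$. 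The bookkeeping is sound: Br\'ezis--Lieb together with the existence of $\lim_n\|Th_n\|_p$ guarantees $\lim_n\|T(h_n-h)\|_p^p$ exists, so the passage to $\|T\|^p\le\|T\|^p(a^p+(1-a^2)^{p/2})$ is justified, and the strict-convexity endgame correctly yields $a=1$, whence $h_n\to h$ in norm and, by continuity of $T$, $\|Th\|_p=\|T\|$.
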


We will be applying this proposition to the adjoint Fourier restriction operator on $\Gamma$ with
$\H=L^2(\sigma)$. We lose no generality in assuming that conditions (i) and (ii) are automatically satisfied by any extremizing sequence $\{f_n\}$. After passing to a subsequence, we may assume that $\{f_n\}$ converges weakly in $L^2(\sigma)$ by Alaoglu's theorem. If $f_n{\rightharpoonup} f$ in $L^2(\sigma)$, then condition (iv) follows because $\sigma$ is compactly supported. Thus Proposition \ref{FVV} states that, for compactly supported measures, the only obstruction to the existence of extremizers is the possibility that every $L^2$ weak limit of any extremizing sequence be zero.

The advantage of working with {\em nonnegative} extremizing sequences in this context first appeared in the work of Kunze \cite{K}.
The following is the sole step in the analysis which works only for nonnegative extremizing sequences:
\begin{lemma}\label{uiimpliesprecompact}
Let $\{f_n\}$ and $\{F_n\}$ be as in Proposition \ref{concentrationcompactness}. Suppose that $\{F_n\}$ satisfies condition \eqref{uniformly integrable}. Then $\{f_n\}$ is precompact in $L^2(\sigma)$. 
\end{lemma}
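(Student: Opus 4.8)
The goal is to show that a nonnegative extremizing sequence $\{f_n\}$ for which the bulk part $\{F_n\}$ from Proposition~\ref{concentrationcompactness} is uniformly integrable of order $2$ is in fact precompact in $L^2(\sigma)$. The plan is to produce a candidate strong limit by exploiting weak compactness plus the nonnegativity, and then to invoke the Fanelli--Vega--Visciglia criterion (Proposition~\ref{FVV}). First I would pass to a subsequence so that $f_n\rightharpoonup f$ weakly in $L^2(\sigma)$ (Alaoglu), and recall that since $\sigma$ is compactly supported, the adjoint restriction operator $T:f\mapsto\widehat{f\sigma}$ maps weak-$L^2$ convergence to a.e.\ convergence of $\widehat{f_n\sigma}$ to $\widehat{f\sigma}$; thus conditions (i), (ii), (iv) of Proposition~\ref{FVV} hold automatically for the extremizing sequence. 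The only thing left to verify is (iii), namely $f\neq 0$. Once $f\neq 0$ is established, Proposition~\ref{FVV} gives $f_n\to f$ in $L^2(\sigma)$, which is precompactness.

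To prove $f\neq 0$, I would argue with the uniformly integrable part. Since $\|G_n\|_2\to 0$, we have $F_n\rightharpoonup f$ weakly as well; since each $F_n\ge 0$, any weak limit $f$ is $\ge 0$ a.e. The key point is that $\{F_n\}$ does not lose mass: because $\sigma(\Gamma)=\ell<\infty$ is a finite measure, a uniformly integrable bounded sequence is, by the Dunford--Pettis theorem, relatively weakly compact in $L^1(\sigma)$, and more to the point no $L^2$ mass can escape. Concretely, I would use that $\|F_n\|_2\ge \|f_n\|_2-\|G_n\|_2$ is bounded below away from $0$ for large $n$ (since $\|f_n\|_2\to 1$ and $\|G_n\|_2\to 0$), together with Lemma~\ref{L1lowerbound}-type information: recall from the proof of Lemma~\ref{upperboundsR} that the first summand $f_0$ of the decomposition satisfies $\|f_0\|_1\ge c$ for a universal constant $c>0$, and $f_0\le F_n$. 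Hence $\|F_n\|_{L^1(\sigma)}\ge c>0$ uniformly in $n$. Now uniform integrability of order $2$ (equivalently, of $\{F_n^2\}$ of order $1$, since these are bounded in $L^2$) forces the $L^1$ mass not to concentrate on small sets; combined with $\|F_n\|_1\ge c$, there is $\delta>0$ and, for each $n$, a set $A_n$ with $\sigma(A_n)\ge\delta$ on which $F_n$ is bounded below in a quantitative average sense. Passing to the weak limit, I would conclude $\int_\Gamma f\,d\sigma\ge c'>0$, hence $f\neq 0$.

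The step I expect to be the main obstacle is making the nonnegativity-plus-uniform-integrability argument for $f\neq 0$ fully rigorous: weak $L^2$ convergence does not by itself prevent the limit from being zero (Gaussian-bump escape to infinity is exactly the obstruction flagged in \cite{FVV1}), so one must genuinely use that (a) the underlying measure space is finite, so there is ``nowhere to escape to'' in the non-compact sense, and (b) uniform integrability rules out concentration onto a null set. The cleanest route is: restrict to a fixed set $A$ of positive measure where the $F_n$ have non-negligible average, use that on such a set weak-$L^2$ limits of nonnegative functions with a uniform lower bound on $\int_A F_n$ satisfy $\int_A f\ge \liminf\int_A F_n>0$ via testing against $\chi_A\in L^2(\sigma)$; nonnegativity of $f$ then gives $f\not\equiv 0$. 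I would also double-check that the uniform-integrability hypothesis is being used in the form ``$\lim_{R\to\infty}\sup_n\int_{\{F_n>R\}}F_n^2\,ds=0$'', which is the characterization recorded right after the definition, and which, on the finite measure space $(\Gamma,\sigma)$, is exactly what is needed to transfer an $L^1$ lower bound through the weak limit without loss of mass.
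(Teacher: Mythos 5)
Your framework is sound — pass to a weakly convergent subsequence, observe that (i), (ii), (iv) of Proposition~\ref{FVV} hold automatically, and reduce everything to showing the weak limit is nonzero — and this is indeed the skeleton of the paper's argument. But the way you carry out the crucial step ``$f\neq 0$'' has a genuine gap.

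You claim that the first summand $f_0$ of the decomposition satisfies $\|f_0\|_1\geq c$ for a universal constant, hence $\|F_n\|_1\geq c>0$ uniformly. This is not justified. Lemma~\ref{L1lowerbound} yields only $\|f_0\|_{L^1(\sigma)}\gtrsim|\C_0|^{1/2}$, and the radius $r_0$ of $\C_0$ has no \emph{a priori} lower bound — it can (and in the concentration scenario it does) tend to zero along the sequence, sending $\|f_{n,0}\|_1$ to zero as well. If a uniform $L^1$ lower bound followed from the decomposition alone, your argument would then close with nothing more than weak convergence against the constant function $1\in L^2(\sigma)$; you would never use the uniform integrability hypothesis, and the same reasoning would apply to a concentrating sequence, which it demonstrably cannot. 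That is the tell: your route inadvertently bypasses the hypothesis you are supposed to exploit, which signals that the $L^1$ lower bound is not free.

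The paper instead argues by contradiction, and this is precisely where uniform integrability is used. Suppose $F_n\rightharpoonup F=0$. Testing against $1\in L^2(\sigma)$ (which is where finiteness of $\sigma$ enters) gives $\int_\Gamma F_n\,d\sigma\to 0$; since $F_n\geq 0$, this is $\|F_n\|_{L^1(\sigma)}\to 0$, hence $F_n\to 0$ in measure. Now Proposition~\ref{uniform integrability+measure} (a Vitali-type convergence theorem: convergence in measure plus uniform integrability of order $2$ on a finite measure space gives $L^2$ convergence) forces $F_n\to 0$ in $L^2(\sigma)$, contradicting $\|F_n\|_2\to 1$. If you want a ``direct'' version of your idea, the correct deduction from uniform integrability is: pick $R$ with $\sup_n\int_{\{F_n>R\}}F_n^2\,d\sigma<\tfrac18$; then for large $n$, $\int_{\{F_n\leq R\}}F_n^2\geq\tfrac18$, and since $\int_{\{F_n\leq R\}}F_n^2\leq R\int F_n$ you get $\|F_n\|_1\geq(8R)^{-1}>0$. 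Either route works; your Dunford--Pettis/the-sets-$A_n$ sketch is unnecessary and, as written, does not yield a fixed test set to pass through the weak limit.
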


\begin{proof}
By assumption the sequence $\{F_n\}$ consists of nonnegative functions and is uniformly integrable of order 2.   
Moreover, $\|F_n\|_2\rightarrow 1$ as $n\rightarrow\infty$. 

We first show that every $L^2$ weak limit of $\{F_n\}$ is nonzero. The set of $L^2$ weak limits of $\{F_n\}$ is clearly nonempty. We can assume, possibly after extraction of a subsequence, that $F_n\rightharpoonup F$ for some $F\in L^2(\sigma)$. Suppose that $F=0$ a.e. on $\Gamma$. Then
$$\int_\Gamma F_n d\sigma\rightarrow\int_\Gamma Fd\sigma=0\textrm{ as } n\rightarrow\infty. $$
Since $F_n\geq 0$, this means that the sequence $\{F_n\}$ converges to 0 in $L^1(\sigma)$, and thus $F_n\rightarrow 0$ in measure. In view of Proposition \ref{uniform integrability+measure}, $F_n\rightarrow 0$ in $L^2(\sigma)$, and so $1=\|F_n\|_2\rightarrow 0$ as $n\rightarrow\infty$, a contradiction. Thus $F\neq 0$ as was to be shown.

We use this to prove that the sequence $\{f_n\}$ is precompact.
Since $f_n=F_n+G_n$, we have that
$$\|\widehat{F_n\sigma}\|_6\geq\|\widehat{f_n\sigma}\|_6-\|\widehat{G_n\sigma}\|_6\geq\|\widehat{f_n\sigma}\|_6-{\mathbf C}[\Gamma]\|G_n\|_2.$$
It follows that $\|\widehat{F_n\sigma}\|_6\rightarrow {\mathbf C}[\Gamma]$ as $n\rightarrow\infty$. This means that $\{F_n\}$ is itself an extremizing sequence. By the previous paragraph, we can assume, possibly after extraction of a subsequence, that $F_n\rightharpoonup F$ for some nonzero $F\in L^2(\sigma)$.
Then an application of Proposition \ref{FVV} 
(with $d=2$, $p=6$, $\H=L^2(\sigma)$ and $T=$ Fourier extension operator on $\Gamma$ defined by $Tf:=\widehat{f\sigma}$)
allows us to conclude that $f_n\rightarrow F$ in $L^2(\sigma)$ as $n\rightarrow\infty$, and so $\{f_n\}$ is precompact.
\end{proof}

We will be done with the proof of Theorem \ref{main} once we show that condition \eqref{concentrates} in Proposition \ref{concentrationcompactness} cannot happen, and this is the subject of the next two sections. 

\section{Exploring concentration}\label{sec:Exploring concentration}

 We start by recalling some aspects of Foschi's work \cite{F}. 
Consider the parabola
$$\P:=\{(y,z)\in\R^2: z= y^2\}$$ 
equipped with projection measure\footnote{See \cite{CS} and the references therein for a discussion of why this measure is natural from a geometric point of view.} $d\sigma_{\P}:=dy$ instead of arclength measure. We have an inequality
\begin{equation}\label{foschi}
\|\widehat{f\sigma_{\P}}\|_{L^6(\R^2)}\leq {\mathbf C_F}\|f\|_{L^2(\sigma_{\P})},
\end{equation}
where ${\mathbf C_F}$ denotes again the optimal constant.
Foschi showed that extremizers exist for the inequality \eqref{foschi} and computed the optimal constant 
\begin{equation}\label{Foschi}
{\mathbf C_F}=\frac{(2\pi)^{1/2}}{{12}^{1/12}}.
\end{equation} 
An example of one such extremizer is given by the Gaussian $G(y):=e^{-y^2}$. Other extremizers are obtained from $G$ by space-time translations, parabolic dilations, space rotations, phase shifts and Galilean transformations. 

A straightforward scaling argument shows the following: consider the dilated parabola
$$\P_\mu:=\Big\{(y,z)\in\R^2: z=\frac{\mu y^2}{2}\Big\},$$
again equipped with projection measure $d\sigma_{\P_\mu}=dy$. Then the optimal constant in the inequality
\begin{equation}\label{dilatedfoschi}
\|\widehat{f\sigma_{\P_\mu}}\|_{L^6(\R^2)}\leq {\mathbf C_F}[\mu]\|f\|_{L^2(\sigma_{\P_\mu})}
\end{equation}
satisfies ${\mathbf C_F}[\mu]={\mathbf C_F}[1] \mu^{-1/6}$.
In particular, ${\mathbf C_F}[1]={(2\pi)^{1/2}}{{3}^{-1/12}}$.

Since projection measure can be regarded as a limit of arclength measures, the analysis of extremizers for inequality \eqref{dilatedfoschi} is of significance for our discussion. If an extremizing sequence $\{f_n\}$ for inequality \eqref{TS} concentrates at a point $\gamma(s)\in\Gamma$, then the sequence consisting of certain natural transplantations of $f_n$ to functions $\tilde{f_n}$ (each $\tilde{f_n}$ begin defined on the limiting parabola) will also be an extremizing sequence for \eqref{dilatedfoschi} with $\mu=\kappa(\gamma(s))$. To see why this is the case, denote by $T_{s,r}$ the restriction of the Fourier extension operator to a given cap $\C=\C(s,r)\subset\Gamma$:
$$T_{s,r}f(x,t)=\int_\C f(y)e^{-i(x,t)\cdot y}d\sigma(y)=\int_{s-r}^{s+r}f(\gamma(s))e^{-i(x,t)\cdot\gamma(s)}ds.$$
We are interested in the operator norm $\|T_{s,r}\|:=\sup_{0\neq f\in L^2(\sigma)}\|T_{s,r} f\|_6 \|f\|_{L^2(\sigma)}^{-1}$, and prove the following result:

\begin{proposition}\label{limoc}
For every $s\in(0,\ell)$,
$$\lim_{r\rightarrow 0^+} \|T_{s,r}\|={\mathbf C_F}[\kappa(\gamma(s))].$$
\end{proposition}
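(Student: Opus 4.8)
The plan is to prove the two inequalities $\liminf_{r\to 0^+}\|T_{s,r}\| \geq \mathbf{C_F}[\kappa(\gamma(s))]$ and $\limsup_{r\to 0^+}\|T_{s,r}\| \leq \mathbf{C_F}[\kappa(\gamma(s))]$ separately, both via a rescaling argument that zooms in on the cap $\C(s,r)$ and identifies the rescaled extension operator with the extension operator on a parabola. Fix $s$, write $\kappa_0 := \kappa(\gamma(s))$, and choose an affine change of coordinates in $\R^2$ so that near $\gamma(s)$ the curve $\Gamma$ is a graph $(u, \phi_r(u))$ with $\phi_r(0)=0$, $\phi_r'(0)=0$, $\phi_r''(0)=\kappa_0$ — i.e. locally $\Gamma$ looks like $\P_{\kappa_0}$ to second order. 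First I would set up the precise anisotropic rescaling: parametrizing the cap $\C(s,r)$ by arclength $s' \in (s-r, s+r)$, substitute $s' = s + r\tau$, and in frequency space rescale $(x,t)$ by the dual parabolic scaling (one variable by $r^{-1}$, the other by $r^{-2}$). Under this substitution, $\|T_{s,r}f\|_{L^6(\R^2)}\|f\|_{L^2(\sigma)}^{-1}$ becomes, up to explicit Jacobian factors that tend to $1$ as $r\to 0$, the ratio $\|\widehat{g\sigma_{\Gamma_r}}\|_6 \|g\|_{L^2(\sigma_{\Gamma_r})}^{-1}$ for the rescaled arc $\Gamma_r$, where $\Gamma_r \to \P_{\kappa_0}$ (in $C^2_{\mathrm{loc}}$, say) as $r \to 0^+$.

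For the lower bound, I would take a fixed nice extremizer or near-extremizer $G$ for the inequality \eqref{dilatedfoschi} with $\mu = \kappa_0$ — e.g. a Gaussian, which decays rapidly — truncate it to a large compact set, transplant it via the above rescaling to a function $f_r$ supported on $\C(s,r)$, and estimate $\|T_{s,r}f_r\|_6 / \|f_r\|_{L^2(\sigma)}$. Because $G$ is concentrated and $\Gamma_r \to \P_{\kappa_0}$ uniformly on compact sets together with its relevant derivatives, the phase functions $e^{-i(x,t)\cdot\gamma(s+r\tau)}$ converge (after rescaling the frequency variables) to the parabola phase, and dominated convergence gives $\liminf_{r\to 0^+}\|T_{s,r}\| \geq (1-\epsilon)\,\mathbf{C_F}[\kappa_0]$ for every $\epsilon>0$. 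For the upper bound, the idea is the reverse: given any $f$ supported on $\C(s,r)$ with $\|f\|_{L^2(\sigma)}=1$, rescale it to a function $g_r$ on $\Gamma_r$; since $\Gamma_r$ is a small $C^2$-perturbation of $\P_{\kappa_0}$, one expects $\|\widehat{g_r \sigma_{\Gamma_r}}\|_6 \le (\mathbf{C_F}[\kappa_0] + o(1))\|g_r\|_{L^2(\sigma_{\Gamma_r})}$ uniformly over such $g_r$, which transplants back to $\|T_{s,r}f\|_6 \le (\mathbf{C_F}[\kappa_0]+o(1))$.

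The main obstacle is the upper bound: unlike the lower bound, it requires a \emph{uniform} comparison of the sharp restriction constant of a curve with that of the osculating parabola, valid over \emph{all} functions including those that may spread out over the (shrinking, in rescaled coordinates, as $r\to 0$) arc $\Gamma_r$. One clean way to handle this is to note that $\Gamma_r$, after rescaling, occupies an interval of length $\asymp 1/r \to \infty$ on which it converges in $C^2$ to the full parabola $\P_{\kappa_0}$; then one localizes: decompose an arbitrary $g_r$ into pieces supported on intervals of bounded length (using a Tomas–Stein-type almost-orthogonality or the cap machinery of \S\ref{sec:The decomposition algorithm} to control cross terms, exactly the philosophy of Corollary \ref{triwidc}), on each of which the curve is uniformly $C^2$-close to a parabolic arc, apply the known parabola bound piece by piece, and sum. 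Alternatively, and perhaps more simply, one invokes a general semicontinuity principle: the sharp constant $\mathbf{C}[\cdot]$ depends upper-semicontinuously on the curve in an appropriate topology because the extension operator depends continuously on the curve's parametrization in the strong operator topology on compact frequency sets, combined with the Tomas–Stein bound to handle the tails. I would carry out the localization approach, as it meshes with the cap estimates already developed, and flag the verification that the Jacobian and phase errors are genuinely $o(1)$ uniformly as the one routine-but-necessary computation.
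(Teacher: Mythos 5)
Your proposal agrees with the paper on the lower bound: the paper also establishes $\liminf_{r\to 0^+}\|T_{s,r}\|\geq \mathbf{C_F}[\kappa(\gamma(s))]$ via Gaussian trial functions (it delegates this to the variational calculation in \S\ref{sec:A variational calculation}). The divergence, and the genuine gap, is in the upper bound.

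For the upper bound the paper does not compare operator norms of the arc with those of an osculating parabola. Instead it uses the Foschi-style Cauchy--Schwarz trick: by Plancherel, $\|T_{s,r}f\|_6^6=(2\pi)^2\|(f\sigma_{s,r})^{(\ast 3)}\|_2^2$, and pointwise Cauchy--Schwarz gives
\begin{equation*}
|f\sigma_{s,r}\ast f\sigma_{s,r}\ast f\sigma_{s,r}|^2\leq \bigl(|f|^2\sigma_{s,r}\bigr)^{(\ast 3)}\cdot \sigma_{s,r}^{(\ast 3)},
\end{equation*}
so that
\begin{equation*}
\|T_{s,r}f\|_6^6\leq (2\pi)^2\,\|\sigma_{s,r}^{(\ast 3)}\|_\infty\,\|f\|_{L^2(\sigma_{s,r})}^6.
\end{equation*}
This reduces the whole upper bound to the single explicit limit $\|\sigma_{s,r}^{(\ast 3)}\|_\infty\to \mathbf{C_F}[\kappa]^6/(2\pi)^2$, which the paper computes by hand (orthogonal change of variables in frequency, polar coordinates, a stationary-phase change of variables). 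The point is that the Cauchy--Schwarz bound is asymptotically sharp as $r\to 0^+$, which is exactly why the Gaussian is the extremizer for the parabola in Foschi's work. This is a self-contained calculation and requires no comparison between sharp constants of nearby curves.

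Your proposed upper bound has two concrete problems. First, the geometric claim that after rescaling $\Gamma_r$ occupies an interval of length $\asymp 1/r$ on which it converges in $C^2$ to the full parabola is incorrect. With the substitution $s'=s+r\tau$ and the dual parabolic rescaling of $(x,t)$, the rescaled arc has length $\asymp 1$; if instead you zoom in at a scale $\epsilon\ll r$ so that the rescaled arc has length $r/\epsilon$, then the cubic error $O(\epsilon\,\tilde u^3)$ stays $o(1)$ on $|\tilde u|\lesssim r/\epsilon$ only when $\epsilon\gg r^{3/2}$, so the available interval length is $O(r^{-1/2})$, never $\asymp 1/r$. Second and more fundamentally: the statement you call ``routine-but-necessary'' --- that $\|\widehat{g_r\sigma_{\Gamma_r}}\|_6\leq (\mathbf{C_F}[\kappa_0]+o(1))\|g_r\|_{L^2(\sigma_{\Gamma_r})}$ \emph{uniformly} over $g_r$ when $\Gamma_r$ is $C^2$-close to a piece of $\P_{\kappa_0}$ --- is precisely the upper-semicontinuity of the sharp constant that the entire upper bound hinges on, and it is not proved. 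It is also not routine: the naive phase comparison $|e^{-ith_r(u)}-e^{-itp(u)}|\lesssim |t|\,\|h_r-p\|_\infty$ loses a factor of $|t|$ that is not integrable against the $L^6$ weight, and without additional decay or almost-orthogonality input there is no obvious way to make the extension operators converge in a sense strong enough to control the $L^2\to L^6$ norm. The ``semicontinuity principle'' you gesture at (SOT convergence on compact frequency sets plus Tomas--Stein tails) does not immediately close this, since Tomas--Stein controls the total $L^6$ mass but not the mass at high frequencies for an arbitrary near-extremizer. Your alternative localization route, which does mesh with Corollary~\ref{triwidc}, could plausibly be pushed through, but it would require substantial additional work and would amount to reproving a stability theorem that the paper simply bypasses.
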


\begin{proof}
Fix $s\in(0,\ell)$ and let $\kappa:=\kappa(\gamma(s))$. That the lefthand side is greater than or equal to the righthand side can be easily seen by taking the function $G(y)=e^{-\frac{\kappa y^2}{2}}$ and considering the dilated family $G_\delta(y)=\delta^{-1/2}G(\delta^{-1}y)$ for $\delta>0$. For details, see \S \ref{sec:A variational calculation} below.

So we focus on proving the reverse inequality. Let $\sigma_{s,r}$ denote the restriction of  arclength measure $\sigma$ to the cap $\C(s,r)$, and denote the triple convolution of $\sigma_{s,r}$ with itself by $\sigma_{s,r}^{(\ast 3)}:=\sigma_{s,r}\ast\sigma_{s,r}\ast\sigma_{s,r}$. We have that

\begin{align}
\|T_{s,r}f\|_6^6&=(2\pi)^2\iint_{\R^2} |f\sigma_{s,r}\ast f\sigma_{s,r}\ast f\sigma_{s,r}(\xi,\tau)|^2d\xi d\tau\label{upperboundforopnormT}\notag\\
&\leq(2\pi)^2\iint |f|^2\sigma_{s,r}\ast |f|^2\sigma_{s,r}\ast |f|^2\sigma_{s,r}(\xi,\tau)\cdot\sigma_{s,r}\ast \sigma_{s,r}\ast \sigma_{s,r}(\xi,\tau)d\xi d\tau\notag\\
&\leq(2\pi)^2\sup_{(\xi,\tau)\in\textrm{ supp}(\sigma_{s,r}^{(\ast 3)})}\sigma_{s,r}^{(\ast 3)} (\xi,\tau)\cdot\iint |f|^2\sigma_{s,r}\ast |f|^2\sigma_{s,r}\ast |f|^2\sigma_{s,r}(\xi,\tau)d\xi d\tau\notag\\
&=(2\pi)^2\|\sigma_{s,r}^{(\ast 3)}\|_{L^\infty(\R^2)} \|f\|_{L^2(\sigma_{s,r})}^6,\notag
\end{align}
where we used H\"{o}lder's inequality twice. 
It will then be enough to show that 
$$\|\sigma_{s,r}\ast\sigma_{s,r}\ast\sigma_{s,r}\|_\infty\rightarrow \frac{{\mathbf C_F}[\kappa]^6}{(2\pi)^{2}}\textrm{ as }r\rightarrow 0^+.$$ 

\noindent After applying a rigid motion\footnote{For a more detailed discussion of this procedure, see \S \ref{sec:Introducing local coordinates} below.}  of $\R^2$ to a cap $\C=\C(s,r)$, we may parametrize it in the following way:
\begin{displaymath}
\begin{array}{rcl}
\widetilde{\gamma}_{s,r}: I_r & \rightarrow & \mathbb{R}^2\\
y & \mapsto &  \Big(y,g(y)=\frac{\kappa}{2}y^2+\phi(y)\Big),
\end{array}
\end{displaymath}
where $I_r$ is an interval centered at the origin of length $\asymp r$, $\kappa=g''(0)(1+g'(0)^2)^{-3/2}$ is the curvature of $\Gamma$ at $\gamma(s)$, and $\phi$ is a real-valued smooth function satisfying $\phi(y)=O(|y|^3)$ as $|y|\rightarrow 0$.
We also let $\eta_r\in C_0^\infty(\R)$ be a mollified version of the characteristic function of the interval $I_r$: to accomplish this, fix $\eta\in C_0^\infty(\R)$ such that $\eta\equiv 1$ on $[-1,1]$ and $\eta(\xi)=0$ if $|\xi|\geq 2$, and define $\eta_r:=\eta(2|I_r|^{-1}\cdot)$.

With these definitions we have that, for $|\xi|\leq |I_r|/2$, 
$$\sigma_{s,r}(\xi,\tau)=G_r(\xi)\delta(\tau-g(\xi))d\xi d\tau,$$
where $G_r(\xi):=(1+g'(\xi)^2)^{1/2}\eta_r(\xi)$ is a smooth function supported on $\{\xi\in\R: |\xi|\lesssim r\}$. Observe that $G_r$ (and therefore $\sigma_{s,r}$) depends also on $s$, even though this is not explicitly indicated by the notation, with uniform bounds on $s$ and appropriately uniform dependence on $r$ after dilations.
Following \cite{F} and \cite{Q2} we compute:

\begin{align*}
\sigma_{s,r}^{(\ast 3)} (\xi,\tau)=&\iint G_r(\omega_1)G_r(\omega_2-\omega_1)G_r(\xi-\omega_2)\delta(\tau-g(\omega_1)-g(\omega_2-\omega_1)-g(\xi-\omega_2))d\omega_1d\omega_2\\
=&\iiint_{|\omega|\lesssim r} G_r(\omega_1)G_r(\omega_2)G_r(\omega_3)\delta {\tau-g(\omega_1)-g(\omega_2)-g(\omega_3)\choose \xi-\omega_1-\omega_2-\omega_3} d\omega_1d\omega_2d\omega_3.
\end{align*}

\noindent Change variables $\omega=O\cdot\zeta$, where $O\in SO(3)$ is the orthogonal matrix
\begin{displaymath}
O=
\left(
\begin{array}{ccc}
\frac{1}{\sqrt{3}} & -\frac{1}{\sqrt{2}}  & -\frac{1}{\sqrt{6}} \\
\frac{1}{\sqrt{3}}  &  \frac{1}{\sqrt{2}} & -\frac{1}{\sqrt{6}} \\
\frac{1}{\sqrt{3}}  & 0 & \frac{2}{\sqrt{6}} 
\end{array}\right).
\end{displaymath}

\noindent Under this transformation,
$$\langle(1,1,1),\omega\rangle=\langle(1,1,1), O\cdot\zeta\rangle=\langle O^*\cdot(1,1,1),\zeta\rangle=\sqrt{3}\langle(1,0,0),\zeta\rangle=\sqrt{3}\zeta_1.$$

\noindent The integral becomes 
\begin{multline*}
\sigma_{s,r}^{(\ast 3)} (\xi,\tau)=\iiint_{|\zeta|\lesssim r} G_r(\frac{\zeta_1}{\sqrt{3}}-\frac{\zeta_2}{\sqrt{2}}-\frac{\zeta_3}{\sqrt{6}})G_r(\frac{\zeta_1}{\sqrt{3}}+\frac{\zeta_2}{\sqrt{2}}-\frac{\zeta_3}{\sqrt{6}})G_r(\frac{\zeta_1}{\sqrt{3}}+\frac{2\zeta_3}{\sqrt{6}})\cdot\\
{\cdot\delta {\tau-g(\frac{\zeta_1}{\sqrt{3}}-\frac{\zeta_2}{\sqrt{2}}-\frac{\zeta_3}{\sqrt{6}})-g(\frac{\zeta_1}{\sqrt{3}}+\frac{\zeta_2}{\sqrt{2}}-\frac{\zeta_3}{\sqrt{6}})-g(\frac{\zeta_1}{\sqrt{3}}+\frac{2\zeta_3}{\sqrt{6}})\choose \xi-\sqrt{3}\zeta_1} d\zeta_1d\zeta_2d\zeta_3.}
\end{multline*}

\noindent Renaming variables and setting
$$\mathfrak{G}_r(\xi;\zeta_2,\zeta_3)=G_r\Big(\frac{\xi}{3}-\frac{\zeta_2}{\sqrt{2}}-\frac{\zeta_3}{\sqrt{6}}\Big)G_r\Big(\frac{\xi}{3}+\frac{\zeta_2}{\sqrt{2}}-\frac{\zeta_3}{\sqrt{6}}\Big)G_r\Big(\frac{\xi}{3}+\frac{2\zeta_3}{\sqrt{6}}\Big),$$
this simplifies to 

\begin{multline*}
\sigma_{s,r}^{(\ast 3)} (\xi,\tau)=\frac{\chi(|\xi|\lesssim r)}{\sqrt{3}}\iint_{|(\zeta_2,\zeta_3)|\lesssim r}\mathfrak{G}_r(\xi;\zeta_2,\zeta_3)\cdot\\
\cdot\delta\Big(\tau-\frac{\kappa}{6}\xi^2-\frac{\kappa}{2}|\zeta|^2-\phi\Big(\frac{\xi}{3}-\frac{\zeta_2}{\sqrt{2}}-\frac{\zeta_3}{\sqrt{6}}\Big)-\phi\Big(\frac{\xi}{3}+\frac{\zeta_2}{\sqrt{2}}-\frac{\zeta_3}{\sqrt{6}}\Big)-\phi\Big(\frac{\xi}{3}+\frac{2\zeta_3}{\sqrt{6}}\Big)\Big)d\zeta_2d\zeta_3.
\end{multline*}

Introducing polar coordinates on the $(\zeta_2,\zeta_3)$-plane,
$$\sigma_{s,r}^{(\ast 3)}  (\xi,\tau)=\frac{\chi(|\xi|\lesssim r)}{\sqrt{3}}\int_0^{2\pi}\int_{0\leq\rho\lesssim r}\widetilde{\mathfrak{G}}_r(\xi;\rho,\theta)\delta\Big(\tau-\frac{\kappa}{6}\xi^2-3\phi\Big(\frac{\xi}{3}\Big)-\psi(\xi;\rho,\theta)\Big)\rho d\rho d\theta$$
where
$$\widetilde{\mathfrak{G}}_r(\xi;\rho,\theta)=G_r\Big(\frac{\xi}{3}-\rho\Big(\frac{\cos\theta}{\sqrt{2}}+\frac{\sin\theta}{\sqrt{6}}\Big)\Big)G_r\Big(\frac{\xi}{3}+\rho\Big(\frac{\cos\theta}{\sqrt{2}}-\frac{\sin\theta}{\sqrt{6}}\Big)\Big)G_r\Big(\frac{\xi}{3}+\frac{2}{\sqrt{6}}\rho\sin\theta\Big)$$
and

\begin{multline}
\psi(\xi;\rho,\theta)=\frac{\kappa}{2}\rho^2-3\phi\Big(\frac{\xi}{3}\Big)+\phi\Big(\frac{\xi}{3}-\rho\Big(\frac{\cos\theta}{\sqrt{2}}+\frac{\sin\theta}{\sqrt{6}}\Big)\Big)+\\
\phi\Big(\frac{\xi}{3}+\rho\Big(\frac{\cos\theta}{\sqrt{2}}-\frac{\sin\theta}{\sqrt{6}}\Big)\Big)+\phi\Big(\frac{\xi}{3}+\frac{2}{\sqrt{6}}\rho\sin\theta\Big).
\end{multline}

We prepare to change variables again. Recall the assumption that $\kappa>0$. Note that $\psi(\xi;0,\theta)=0$ and $\psi(\xi;\rho,\theta)>0$ for every sufficiently small $\rho>0$. Moreover, a calculation shows that the same thing happens with first derivatives: $\partial_\rho\psi(\xi;0,\theta)=0$  and $\partial_\rho\psi(\xi;\rho,\theta)>0$ if $\rho>0$ is sufficiently small. We also have that $\partial_\rho^2\psi(\xi;0,\theta)=\kappa+\phi''(\xi/3)=g''(\xi/3)$.

For $u\geq 0$, set $\rho=\rho(u):=\psi^{-1}(u)$, and compute:

\begin{align}
\sigma_{s,r}^{(\ast 3)}  (\xi,\tau)=&\frac{\chi(|\xi|\lesssim r)}{\sqrt{3}}\int_0^{2\pi}\int_{0\leq u\leq C\psi} \widetilde{\mathfrak{G}}_r(\xi;\rho(u),\theta)\delta\Big(\tau-\frac{\kappa}{6}\xi^2-3\phi(\frac{\xi}{3})-u\Big)\frac{\rho(u)}{\partial_\rho\psi(\rho(u))}dud\theta\label{triplesigma1}\\
=&\frac{\chi(|\xi|\lesssim r)}{\sqrt{3}}\int_0^{2\pi}  \widetilde{\mathfrak{G}}_r(\xi;\rho(\tau-\frac{\kappa}{6}\xi^2-3\phi(\frac{\xi}{3})),\theta) \frac{\rho(\tau-\frac{\kappa}{6}\xi^2-3\phi(\frac{\xi}{3}))}{\partial_\rho\psi(\rho(\tau-\frac{\kappa}{6}\xi^2-3\phi(\frac{\xi}{3})))}d\theta.\label{triplesigma2}
\end{align}

\noindent Note that, for each $\theta\in [0,2\pi]$, the integrand in \eqref{triplesigma2} is supported in the region

$$\Big\{(\xi,\tau)\in\R^2: 0\leq\tau-\frac{\kappa}{6}\xi^2-3\phi(\frac{\xi}{3})\leq C\psi(\xi;r,\theta)\Big\},$$ 
where the constant $C<\infty$ is  large enough that the restriction $u\leq C\psi(\xi;r,\theta)$ in the inner integral of \eqref{triplesigma1} becomes redundant because of support limitations on factors present in its integrand.
 From  expression \eqref{triplesigma2} it is clear that the restriction of $\sigma_{s,r}\ast\sigma_{s,r}\ast\sigma_{s,r} $ to its support defines a continuous function of $(\xi,\tau)$ at $(0,0)$. Indeed, $\widetilde{\mathfrak{G}}_r$ is a smooth function of compact support in the variables $\xi,\theta$ and $({\tau-\frac{\kappa}{6}\xi^2-3\phi(\frac{\xi}{3})})^{1/2}$. Additionally, as $(\xi,\tau)\rightarrow(0,0)$,
 
$$\frac{\rho(\tau-\frac{\kappa}{6}\xi^2-3\phi(\frac{\xi}{3}))}{\partial_\rho\psi(\rho(\tau-\frac{\kappa}{6}\xi^2-3\phi(\frac{\xi}{3})))}=\frac{\rho(\tau-\frac{\kappa}{6}\xi^2-3\phi(\frac{\xi}{3}))-\rho(0)}{\partial_\rho\psi(\rho(\tau-\frac{\kappa}{6}\xi^2-3\phi(\frac{\xi}{3})))-\partial_\rho\psi(\rho(0))}\rightarrow\frac{1}{\partial_\rho^2\psi(\rho(0))}=\frac{1}{g''(0)}.$$

\noindent If $(\xi,\tau)\in$ supp$(\sigma_{r,s}\ast\sigma_{r,s}\ast\sigma_{r,s} )$ and $r\rightarrow 0^+$, then $(\xi,\tau)\rightarrow(0,0)$. It follows that

\begin{align*}
\lim_{r\rightarrow 0^+}\|\sigma_{r,s}\ast\sigma_{r,s}\ast\sigma_{r,s}\|_\infty&=\frac{1}{\sqrt{3}}\int_0^{2\pi} \widetilde{\mathfrak{G}}_0(0;\rho(0),\theta)\frac{1}{\partial_\rho^2\psi(0;\rho(0),\theta)}d\theta.\\
&=\frac{2\pi}{\sqrt{3}}\frac{(1+g'(0)^2)^{3/2}}{g''(0)}=\frac{2\pi}{\sqrt{3}}\frac{1}{\kappa}
=\frac{{\mathbf C_F}[\kappa]^6}{(2\pi)^{2}},
\end{align*}
as desired.
\end{proof}

\noindent An immediate consequence is that an extremizing sequence which concentrates must do so at a point of minimal curvature:

\begin{corollary}\label{fconcmincurv}
Let $\{f_n\}\subset L^2(\sigma)$ be an extremizing sequence of nonnegative functions for inequality \eqref{TS}. Suppose that $\{f_n\}$ concentrates at a point $\gamma(s)\in\Gamma$. Then $\kappa(\gamma(s))=\lambda$. 
\end{corollary}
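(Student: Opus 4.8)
The plan is to sandwich the optimal constant $\mathbf C[\Gamma]$ between the two Foschi-type constants $\mathbf C_F[\lambda]$ and $\mathbf C_F[\kappa_0]$, where $\kappa_0:=\kappa(\gamma(s))$ denotes the curvature at the concentration point, and then to invoke the strict monotonicity of the map $\mu\mapsto\mathbf C_F[\mu]$. Precisely, I will establish
\begin{equation}\label{cor-sandwich}
\mathbf C_F[\lambda]\le\mathbf C[\Gamma]\le\mathbf C_F[\kappa_0].
\end{equation}
Granting \eqref{cor-sandwich}: since $\kappa_0\ge\lambda=\min_\Gamma\kappa$ and since $\mathbf C_F[\mu]=\mathbf C_F[1]\,\mu^{-1/6}$ is strictly decreasing in $\mu>0$, one has $\mathbf C_F[\kappa_0]\le\mathbf C_F[\lambda]$, which combined with \eqref{cor-sandwich} forces $\mathbf C_F[\kappa_0]=\mathbf C_F[\lambda]$ and hence $\kappa_0=\lambda$, as desired. (We may assume that $\gamma(s)$ corresponds to an interior parameter $s\in(0,\ell)$, so that Proposition \ref{limoc} applies there; if $s$ is an endpoint of $\Gamma$, first enlarge $\Gamma$ to a slightly bigger arc still satisfying all the standing hypotheses, on which $s$ becomes interior, extend each $f_n$ by zero, and note that the enlarged arc has the same curvature at $\gamma(s)$ and that every small cap used below remains a genuine two-sided cap.)

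\emph{The left inequality in \eqref{cor-sandwich}.} For any $s'\in(0,\ell)$ and any $r>0$ there is the trivial bound $\|T_{s',r}\|\le\mathbf C[\Gamma]$: the operator $T_{s',r}g$ depends only on the restriction of $g$ to the cap $\C(s',r)$, whence $\|T_{s',r}g\|_6=\|\widehat{(g\chi_{\C(s',r)})\sigma}\|_6\le\mathbf C[\Gamma]\,\|g\chi_{\C(s',r)}\|_{L^2(\sigma)}\le\mathbf C[\Gamma]\,\|g\|_{L^2(\sigma)}$. By compactness of $\Gamma$ the curvature attains its minimum; choosing an interior parameter $s'$ with $\kappa(\gamma(s'))$ arbitrarily close to $\lambda$ (possible by continuity of $\kappa$) and letting $r\to0^+$, Proposition \ref{limoc} yields $\mathbf C_F[\kappa(\gamma(s'))]=\lim_{r\to0^+}\|T_{s',r}\|\le\mathbf C[\Gamma]$; since $\mathbf C_F[\cdot]$ is continuous, $\mathbf C_F[\lambda]\le\mathbf C[\Gamma]$.

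\emph{The right inequality in \eqref{cor-sandwich}.} Fix $r>0$ and decompose $f_n=g_n+h_n$ with $g_n:=f_n\chi_{\C(s,r)}$ and $h_n:=f_n-g_n$. The definition of concentration of $\{f_n\}$ at $\gamma(s)$ says exactly that, for this fixed $r$, $\|h_n\|_{L^2(\sigma)}\to0$ as $n\to\infty$. Since $g_n$ is supported in $\C(s,r)$ we have $\widehat{g_n\sigma}=T_{s,r}g_n$, so, using the triangle inequality, the Tomas--Stein inequality \eqref{TS} applied to $h_n$, and $\|g_n\|_{L^2(\sigma)}\le\|f_n\|_{L^2(\sigma)}\le1$,
$$\|\widehat{f_n\sigma}\|_6\le\|\widehat{g_n\sigma}\|_6+\|\widehat{h_n\sigma}\|_6\le\|T_{s,r}\|\,\|g_n\|_{L^2(\sigma)}+\mathbf C[\Gamma]\,\|h_n\|_{L^2(\sigma)}\le\|T_{s,r}\|+\mathbf C[\Gamma]\,\|h_n\|_{L^2(\sigma)}.$$
Letting $n\to\infty$, the left side tends to $\mathbf C[\Gamma]$ (as $\{f_n\}$ is extremizing) while the last term vanishes, so $\mathbf C[\Gamma]\le\|T_{s,r}\|$ for every $r>0$; letting $r\to0^+$ and applying Proposition \ref{limoc} at $\gamma(s)$ gives $\mathbf C[\Gamma]\le\mathbf C_F[\kappa_0]$.

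All the analytic substance is already contained in Proposition \ref{limoc}, so I do not anticipate a genuine obstacle in the above; the two points deserving attention are the (routine) reduction to interior parameters and the elementary remark --- immediate from the scaling relation $\mathbf C_F[\mu]=\mathbf C_F[1]\,\mu^{-1/6}$ --- that $\mathbf C_F$ is \emph{strictly} monotone, which is precisely what upgrades the two soft estimates in \eqref{cor-sandwich} to the exact conclusion $\kappa_0=\lambda$.
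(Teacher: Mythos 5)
Your sandwich argument is correct and is essentially the argument the paper leaves implicit behind the phrase ``an immediate consequence'': your right inequality $\mathbf C[\Gamma]\le\mathbf C_F[\kappa_0]$ is precisely Lemma~\ref{limlim} (stated and proved in \S\ref{sec:The end of the proof}) combined with Proposition~\ref{limoc} and the elementary bound $\|T_{s,r}f_n\|_6\le\|T_{s,r}\|\,\|f_n\|_2$, while the left inequality $\mathbf C_F[\lambda]\le\mathbf C[\Gamma]$ follows from the trivial inequality $\|T_{s',r}\|\le\mathbf C[\Gamma]$ and Proposition~\ref{limoc} applied at interior parameters $s'$ with $\kappa(\gamma(s'))$ approaching $\lambda$. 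Your explicit handling of the endpoint case --- extending $\Gamma$ to a slightly larger arc still satisfying the standing hypotheses, under which the sequence $\{f_n\}$ extended by zero retains $\lim_n\|\widehat{f_n\sigma}\|_6=\mathbf C[\Gamma]$ and concentrates at a now-interior point of unchanged curvature --- is a welcome clarification of a subtlety that the paper disposes of only by an unproved ``we may assume'' in \S\ref{sec:The end of the proof}.
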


\begin{corollary}\label{Fconcmincurv}
Let $\{f_n\}$ and $\{F_n\}$ be as in Proposition \ref{concentrationcompactness}. Suppose that $\{F_n\}$ concentrates at a point $\gamma(s)\in\Gamma$. Then $\kappa(\gamma(s))=\lambda$. 
\end{corollary}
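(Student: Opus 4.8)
The plan is to deduce this immediately from Corollary \ref{fconcmincurv} by observing that $\{F_n\}$ is itself an extremizing sequence of nonnegative functions, so that the statement about concentration at a point of minimal curvature applies to it verbatim. First I would check the normalization: since $F_n$ and $G_n$ have disjoint supports, $\|F_n\|_{L^2(\sigma)}^2=\|f_n\|_{L^2(\sigma)}^2-\|G_n\|_{L^2(\sigma)}^2$, and because $\|G_n\|_{L^2(\sigma)}\to 0$ while $\|f_n\|_{L^2(\sigma)}\to 1$ (the latter since $\|\widehat{f_n\sigma}\|_6\leq{\mathbf C}[\Gamma]\|f_n\|_{L^2(\sigma)}$ forces the $L^2$ norms to tend to $1$ along an extremizing sequence), one gets $\|F_n\|_{L^2(\sigma)}\to 1$, and in particular $\|F_n\|_{L^2(\sigma)}\leq 1$ for every $n$.

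Next I would verify that $\|\widehat{F_n\sigma}\|_6\to{\mathbf C}[\Gamma]$. This is exactly the computation already carried out inside the proof of Lemma \ref{uiimpliesprecompact}: the lower bound
$$\|\widehat{F_n\sigma}\|_6\geq\|\widehat{f_n\sigma}\|_6-\|\widehat{G_n\sigma}\|_6\geq\|\widehat{f_n\sigma}\|_6-{\mathbf C}[\Gamma]\|G_n\|_{L^2(\sigma)}$$
converges to ${\mathbf C}[\Gamma]$, while \eqref{TS} supplies the matching upper bound $\|\widehat{F_n\sigma}\|_6\leq{\mathbf C}[\Gamma]\|F_n\|_{L^2(\sigma)}\leq{\mathbf C}[\Gamma]$. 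Combined with $F_n\geq 0$, this shows that $\{F_n\}$ is an extremizing sequence for \eqref{TS} of nonnegative functions in the precise sense of the definition.

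Finally, since by hypothesis $\{F_n\}$ concentrates at $\gamma(s)$, Corollary \ref{fconcmincurv} applied to the sequence $\{F_n\}$ yields $\kappa(\gamma(s))=\lambda$, which is the desired conclusion. I do not anticipate any genuine obstacle here; the only point worth a sentence is the normalization of $\{F_n\}$, which is harmless because $\|F_n\|_{L^2(\sigma)}\to 1$ — alternatively one may replace $F_n$ by $F_n/\|F_n\|_{L^2(\sigma)}$ throughout, which affects neither the concentration property nor the limiting value of $\|\widehat{F_n\sigma}\|_6$.
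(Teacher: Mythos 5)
Your proposal is correct and is precisely the argument the paper intends: the corollary is stated without proof as an immediate consequence of Corollary \ref{fconcmincurv}, and the key verification that $\{F_n\}$ is itself a nonnegative extremizing sequence (via $\|\widehat{F_n\sigma}\|_6\geq\|\widehat{f_n\sigma}\|_6-{\mathbf C}[\Gamma]\|G_n\|_{L^2(\sigma)}$ together with the trivial upper bound) already appears verbatim in the proof of Lemma \ref{uiimpliesprecompact}. Your remark on the normalization $\|F_n\|_{L^2(\sigma)}\leq 1$ is a harmless but welcome extra check.
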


\section{Comparing optimal constants}\label{sec:Comparing optimal constants}

As was mentioned before, a potential obstruction to the existence of extremizers for  inequality \eqref{TS}, and certainly to the precompactness of arbitrary nonnegative extremizing sequences, is the possibility that for an extremizing sequence satisfying $\|f_n\|_{L^2(\sigma)}=1$, $|f_n|^2$ could conceivably converge weakly to a Dirac mass at a point on the curve. Indeed, let $p\in\Gamma$. The osculating parabola of $\Gamma$ at $p$ is $\P_{\kappa(p)}$. If ${\mathbf C}[\Gamma]$ were equal to ${\mathbf C_F}[\kappa(p)] $, then Foschi's work implies that there would necessarily exist extremizing sequences of the type just described. Therefore an essential step in our analysis is to determine under which conditions one has that 
$${\mathbf C}[\Gamma]>\max_{p\in\Gamma}{\mathbf C_F}[\kappa(p)]
={\mathbf C_F}[\lambda].$$

\noindent The main goal of this section is to prove the following:

\begin{proposition}\label{optimalconstants}
Let $\Gamma\subset\R^2$ be an arc satisfying the conditions of Theorem \ref{main}. 
Then ${\mathbf C}[\Gamma]>{\mathbf C_F}[\lambda]$.
\end{proposition}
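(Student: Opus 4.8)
The plan is to exhibit a single test function $f$ on $\Gamma$ whose ratio $\|\widehat{f\sigma}\|_6/\|f\|_{L^2(\sigma)}$ strictly exceeds ${\mathbf C_F}[\lambda]$. The natural candidate, given the analysis of Section~\ref{sec:Exploring concentration}, is a transplantation to $\Gamma$ of a rescaled Foschi extremizer Gaussian supported near a global minimum $p_0$ of the curvature. Concretely, write $\Gamma$ near $p_0$ in the local coordinates of Proposition~\ref{limoc} as $y\mapsto(y,g(y))$ with $g(y)=\tfrac{\lambda}{2}y^2+\phi(y)$, $\phi(y)=O(|y|^3)$, and set $f_\delta(\gamma(y)):=\delta^{-1/2}e^{-\lambda y^2/(2\delta^2)}\cdot(\text{smooth cutoff})$. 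The leading-order behavior of both sides as $\delta\to 0^+$ reproduces exactly the dilated-parabola problem $\P_\lambda$, so the ratio tends to ${\mathbf C_F}[\lambda]$; the content of the proposition is to compute the first correction in $\delta$ and show it has the \emph{right sign} precisely when the hypothesis \eqref{k2} holds.

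The key steps, in order, would be: (1) Fix local coordinates at a global curvature minimum $p_0$ and expand $g$, the arclength element, and the phase $(x,t)\cdot\gamma(y)$ to the order needed; the cubic term $\phi$ contributes at order $\delta$ relative to the parabolic main term. (2) Expand $\|f_\delta\|_{L^2(\sigma)}^2$ — here the arclength Jacobian $(1+g'(y)^2)^{1/2}=1+O(y^2)$ contributes a computable $O(\delta^2)$ correction, while a cubic perturbation of the curve contributes no first-order term to the $L^2$ norm by oddness. (3) Expand $\|\widehat{f_\delta\sigma}\|_6^6$ using the convolution form \eqref{convTS}: writing $f_\delta\sigma$ as a perturbation of the measure on $\P_\lambda$ carrying the Gaussian, expand the triple convolution to first order in $\delta$ and integrate against its complex conjugate. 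The first-order term is linear in $\phi$, i.e.\ linear in $\phi'''(0)$, which by $g'''(0)=\phi'''(0)$ and the standard formula relating $d^2\kappa/ds^2$ to derivatives of $g$ at a critical point of $\kappa$ is governed by the quantity $\tfrac{d^2\kappa}{ds^2}(p_0)-\tfrac{3}{2}\lambda^3$. (4) Assemble: combine the expansions of numerator and denominator, use the fact that $G(y)=e^{-\lambda y^2/2}$ extremizes the $\P_\lambda$ problem (so that the $O(\delta)$ variation coming purely from changing the Gaussian's parameters vanishes — only the genuinely curve-dependent term survives), and read off that the ratio exceeds ${\mathbf C_F}[\lambda]$ by a positive multiple of $\bigl(\tfrac{3}{2}\lambda^3-\tfrac{d^2\kappa}{ds^2}(p_0)\bigr)\delta+o(\delta)$, which is positive for small $\delta$ under \eqref{k2}. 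One subtlety: if $\Gamma$ has several global minima of $\kappa$, the argument is applied at any one of them; if the curve is globally the parabola $\P_\lambda$ itself the inequality is not strict, but then \eqref{k2} fails with equality — consistent with, though not contradicting, the hypothesis.

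\textbf{Main obstacle.} The hard part will be Step~(3): carefully extracting the first-order-in-$\delta$ term of $\|\widehat{f_\delta\sigma}\|_6^6$ and showing that, after subtracting the part attributable to a mere reparametrization/rescaling of the Gaussian (which cannot help, since the Gaussian is already optimal on $\P_\lambda$), what remains is a \emph{single} linear functional of the perturbation $\phi$ whose coefficient is an explicit nonzero constant times $\bigl(\tfrac{3}{2}\kappa(p_0)^3-\tfrac{d^2\kappa}{ds^2}(p_0)\bigr)$. This requires a somewhat delicate stationary-phase / Taylor-expansion bookkeeping — in particular verifying that potentially dangerous cross terms either vanish by symmetry (oddness of $\phi$'s leading term against the even Gaussian) or are genuinely $o(\delta)$ — and it is natural to quarantine these estimates, as the paper indicates, in Appendix~1. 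A secondary point to handle with care is uniformity of the error terms in the cutoff scale versus $\delta$, so that the cutoff may be taken to shrink slowly enough that it does not affect the leading asymptotics but fast enough that the tail of the Gaussian outside the coordinate patch is negligible.
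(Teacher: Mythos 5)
Your broad strategy matches the paper's: localize at a global minimum $p_0$ of the curvature, transplant a rescaled Foschi Gaussian, and compute the perturbative correction to the ratio $\|\widehat{f\sigma}\|_6/\|f\|_{L^2(\sigma)}$ coming from the deviation of $\Gamma$ from its osculating parabola $\P_\lambda$. However, there is a genuine gap in Steps~(1) and~(3) that breaks the argument as written.

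You expand $g(y)=\tfrac{\lambda}{2}y^2+\phi(y)$ with $\phi(y)=O(|y|^3)$ and claim the first correction to $\|\widehat{f_\delta\sigma}\|_6^6$ is order $\delta$ and ``linear in $\phi'''(0)$,'' governed by $\tfrac{d^2\kappa}{ds^2}(p_0)-\tfrac{3}{2}\lambda^3$. This conflates two different derivatives of the curvature. In the adapted coordinates at $p_0$ (so $g'(0)=0$, $g''(0)=\lambda$), one has $\kappa'(0)=g'''(0)$, and $p_0$ is a \emph{minimum} of $\kappa$; hence $g'''(0)=\phi'''(0)=0$. Your claimed $O(\delta)$ term vanishes identically. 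The quantity $\tfrac{d^2\kappa}{ds^2}(p_0)$ is instead governed by the \emph{quartic} coefficient of $g$ — in the paper's notation $h(y)=\tfrac{\lambda}{2}y^2+ay^4+O(y^5)$ with $\tfrac{d^2\kappa}{ds^2}(p_0)=24a-3\lambda^3$ — which enters the phase $e^{-it\,h(\delta u)}$ only at order $\delta^2$ after the scaling $(y,x,t)\mapsto(\delta u,\delta^{-1}x,\delta^{-2}t)$. Consequently the expansion is even in $\delta$ to leading orders, $\Xi(0)=\Xi'(0)=0$, and one must compute $\Xi''(0)$; that is where the dichotomy $a\lessgtr\tfrac{3}{2}(\lambda/2)^3$, i.e.\ hypothesis \eqref{k2}, appears. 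Your sign conclusion is stated in terms of the right quantity, but the path you give to it (cubic term, first order in $\delta$) is wrong, and an argument built on it would produce $0=0$ rather than the needed strict inequality.

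Two further points worth noting. First, when you ``subtract the part attributable to rescaling the Gaussian,'' what is really being used is that the second variation of the pure parabola functional is a \emph{nonnegative} quadratic form $Q$ on $L^2$ perturbations of $G_0$; the paper makes this explicit, chooses the trial direction $\varphi(u)=c_\lambda u e^{-\lambda u^2/2}$ with $Q(\varphi)=0$ (a Galilean direction), and then shows the curve-dependent pieces of $\Xi''(0)$ tip the balance negative exactly under \eqref{k2}. Your sketch does not isolate $Q$ or justify that the function-variation contribution cannot swamp the curve contribution; this nonnegativity (and its vanishing on the symmetry kernel) is the structural reason the computation closes. Second, the arclength Jacobian $(1+h'(y)^2)^{1/2}$ contributes at the same order $\delta^2$ as the quartic term and with a definite sign; it must be carried along in the same bookkeeping, and its explicit contribution $\tfrac{3}{4}\pi^{3/2}\lambda^{-1/2}{\mathbf C_F}[\lambda]^6$ vs.\ $\tfrac{3}{2}\pi^{3/2}\lambda^{-1/2}{\mathbf C_F}[\lambda]^6$ to the two sides is what produces the explicit constant $\tfrac{3}{2}(\lambda/2)^3$ in the threshold, not just ``the right sign.''
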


\subsection{Introducing local coordinates}\label{sec:Introducing local coordinates}
Let $p_0\in\Gamma$ be a point of minimum curvature i.e. such that $\kappa(p_0)=\lambda$. We will be assuming that $p_0$ is {\em not} an endpoint of $\Gamma$, and we postpone the discussion of the validity of this assumption until the end of this section. By translating the curve we can assume without loss of generality that $p_0=(0,0)$. Possibly after a suitable rotation, the arc $\Gamma$ can be parametrized in a neighborhood of the origin in the following way:

\begin{displaymath}
\begin{array}{rcl}
\widetilde{\gamma}: I & \rightarrow & \mathbb{R}^2\\
y & \mapsto &  (y,h(y)),
\end{array}
\end{displaymath}
where\footnote{There is no cubic term in the expression for $h$ because by assumption the curvature has a minimum at $\gamma(0)=(0,0)=p_0$. Constant and linear terms were likewise removed via the affine change of variables described above.} 
$$h(y)=\frac{ \lambda y^2}{2}+a y^4+\psi(y)$$
and $\psi$ is a real-valued smooth function satisfying $\psi(y)=O(|y|^5)$ as $|y|\rightarrow 0$. 
The parameter $a$, on the other hand, is a function of the second derivative of the curvature with respect to arclength at $0$; see  formula \eqref{2nd} below. We take $I\subseteq\R$ to be an interval centered at the origin which  will be chosen as a function of $\lambda, a$ and $\psi$ later on. Finally, let $\eta_I\in C_0^\infty(\R)$ be a mollified version of the characteristic function of $I$ such that  $\eta_I\equiv 1$ on $I$ and $\eta_I\equiv 0$ outside $2\cdot I$. 
As before, we accomplish this by fixing $\eta\in C_0^\infty(\R)$ such that $\eta\equiv 1$ on $[-1,1]$ and $\eta(y)=0$ if $|y|\geq 2$, and defining $\eta_I:=\eta({2}|I|^{-1}\cdot)$.

These data determine a compact arc $\widetilde{\gamma}(I)=:\widetilde{\Gamma}\subset \Gamma$ in the plane,
which comes equipped with arclength measure $\widetilde{\sigma}$ given, for $y\in I$, by 
 \begin{equation}\label{meas}
d\widetilde{\sigma}(y)=(1+h'(y)^2)^{1/2}\eta_I(y)dy.
\end{equation}

Taylor expanding around $y=0$, we note that the first two  non-zero terms in the expansion of $d\widetilde{\sigma}(y)$ are independent of $a$ and $\psi$:
\begin{equation}\label{apmeas}
d\widetilde{\sigma}(y)=\Big(1+\frac{\lambda^2}{2}y^2+O(y^4)\Big)\eta_I(y)dy.
\end{equation}

On the other hand, the curvature of $\widetilde{\Gamma}$ at a point $\widetilde{\gamma}(y)$ is given, for small $y$, by
$$\kappa(y)=\frac{h''(y)}{(1+h'(y)^2)^{3/2}}=\lambda+(12a-\frac{3\lambda^3}{2})y^2+O_{\lambda, a,\psi}(y^4).$$
We have that $\kappa(0)=h''(0)=\lambda>0$. For $\kappa$ to have a minimum at $y=0$ it is necessary that
$a\geq(\frac{\lambda}{2})^3$.
Let $s$ denote, as usual, the arclength parameter for $\widetilde{\Gamma}$. Then, by a straightforward application of the chain rule, we have that
\begin{equation}\label{2nd}
\frac{d^2\kappa}{ds^2}(0)=\frac{d^2 \kappa}{dy^2}(0)=24a-3\lambda^3,
\end{equation}
and so hypothesis \eqref{k2} in Theorem \ref{main} is  equivalent to $a<\frac{3}{2}(\frac{\lambda}{2})^3$. 
All in all we have that
\begin{equation}\label{a}
\Big(\frac{\lambda}{2}\Big)^3\leq a<\frac{3}{2}\Big(\frac{\lambda}{2}\Big)^3.
\end{equation}

In what follows, we will again denote by ${\mathbf C}[\widetilde{\Gamma}]={\mathbf C}[\widetilde{\Gamma};\lambda,a,\psi,I]$ the optimal constant in the inequality

\begin{equation}\label{TS2}
\|\widehat{f\widetilde{\sigma}}\|_{L^6(\R^2)}\leq {\mathbf C}[\widetilde{\Gamma}]\|f\|_{L^2(\widetilde{\sigma})}.
\end{equation}
\subsection{The unperturbed case}

If $a=\psi=0$, we are dealing with the unperturbed parabola $\P_\lambda$. As we have mentioned before, the corresponding optimal constant satisfies ${\mathbf C_F}[\lambda]={\mathbf C_F}[1] \lambda^{-1/6}$, and examples of extremizers are given by Gaussian functions  $e^{-\rho y^2/2}$  for $\rho>0$. 

Set $G_0(y):=e^{-\lambda y^2/2}$, and consider functions of the form $f=G_0+\phi$ with $\phi\in L^2(\sigma_{\P_\lambda})$. Consider the corresponding functional
\begin{equation}\label{functional}
\phi\mapsto {\mathbf C_F}[\lambda]^6\Big(\int_\R |G_0+\phi|^2 dy\Big)^3-\iint_{\R^2} |G_1+\phi_1|^6 dxdt\geq 0,
\end{equation}
where $G_1(x,t):=\widehat{G_0\sigma_{\P_\lambda}}(-x,t)$ and $\phi_1(x,t):=\widehat{\phi\sigma_{\P_\lambda}}(-x,t)$. The former can be explicitly computed:

\begin{align*}
G_1(x,t)=&\widehat{G_0\sigma_{\P_\lambda}}(-x,t)=\int_\R G_0(y)e^{-it\frac{\lambda y^2}{2}}e^{ixy}dy\\
=&\int_\R e^{-(1+it)\frac{\lambda y^2}{2}}e^{ixy}dy\\
=&\Big(\frac{2\pi}{\lambda}\Big)^{1/2}(1+it)^{-1/2}e^{-\frac{x^2}{2 \lambda(1+it)}}.
\end{align*}

\noindent One readily checks that $G_1\in L^p_{x,t}(\R^2)$ if and only if $p>4$, but we are interested in $L^6$ norms. Since $\widehat{G_0\sigma_{\P_\lambda}}$ is an even function of $x$, 
\begin{equation}\label{CF}
\|G_1\|_{L^6(\R^2)}={\mathbf C_F}[\lambda]\|G_0\|_{L^2(\sigma_{\P_\lambda})}.
\end{equation}

We follow the work of \cite{DMR} and expand the functional \eqref{functional} up to second order, collecting the terms which do not depend on $\phi$ in ${\mathbf I}$, the ones which depend linearly on the real and imaginary parts of $\phi$ in ${\mathbf{II}}$, and  the ones which depend quadratically on the real and imaginary parts of $\phi$ in ${\mathbf{III}}$. This yields:

\begin{equation}\label{O1}
{\mathbf I}={\mathbf C_F}[\lambda]^6\|G_0\|_2^6-\|G_1\|_6^6;
\end{equation}

\begin{equation}\label{O2}
{\mathbf{II}}=6{\mathbf C_F}[\lambda]^6 \|G_0\|_2^4\Re \int G_0\phi-6\Re \iint |G_1|^4\overline{G_1}\phi_1;
\end{equation}

\begin{multline}\label{O3}
{\mathbf{III}}=3{\mathbf C_F}[\lambda]^6 \|G_0\|_2^4\int|\phi|^2+12{\mathbf C_F}[\lambda]^6 \|G_0\|_2^2\Big(\Re\int G_0\phi\Big)^2\\
-9\iint|G_1|^4|\phi_1|^2
-6\Re\iint |G_1|^2\overline{G_1}^2\phi_1^2.
\end{multline}

We already know from \eqref{CF} that ${\mathbf I}=0$. Since $G_0$ is an extremizer, we have that ${\mathbf{II}}=0$ as well. Finally, note that ${\mathbf{III}}$ is, by definition, a quadratic form in $\phi$; denote it by $Q(\phi)$. 
By the symmetries of the problem (respectively, multiplication by a real number, phase shift, space translation, Galilean invariance, scaling and time translation), we have that 
\begin{equation}\label{ker}
Q(G_0)=Q(iG_0)=Q(yG_0)=Q(iyG_0)=Q(y^2G_0)=Q(iy^2G_0)=0,
\end{equation}
and it is proved in \cite{DMR} that $Q$ is positive definite in the subspace of $L^2$ functions which are orthogonal to the functions indicated in \eqref{ker}.  This non-degeneracy property will not be used here; rather, what is essential for our application is  that  $Q(\phi)\geq 0$ for every $\phi\in L^2(\R)$. This is immediate because  ${\mathbf{II}}$ vanishes for every $\phi\in L^2$ and \eqref{functional} defines a nonnegative quantity.

\subsection{A variational calculation}\label{sec:A variational calculation}

In the spirit of the variational calculation in  \cite[\S 17]{CS}, we consider the one-parameter family of trial functions given by
$$(G_0+\epsilon\varphi)_{0<\epsilon\leq\epsilon_0},$$
for some sufficiently small $\epsilon_0>0$, where $G_0(y)=e^{-\lambda y^2/2}$ and $\varphi\in L^2(\R)$ will be chosen below, in such a way that 
\begin{equation}\label{conditions}
\|\varphi\|_2=1 \textrm{ and } \int G_0\varphi=0.
\end{equation}
For technical reasons that will become apparent soon, we introduce an appropriate dilation  of the  cut-off $\eta_I$ which localizes to the region $|y|\lesssim \epsilon\log\frac{1}{\epsilon}$, and define

\begin{equation}\label{family}
f_\epsilon(y):=\epsilon^{-1/2}(G_0+\epsilon\varphi)(\epsilon^{-1} y)\eta_I\Big(\frac{1}{\epsilon\log\frac{1}{\epsilon}}y\Big).
\end{equation}
Notice that the family $(f_\epsilon)_{\epsilon>0}$ is $L^2$-normalized in the sense that
\begin{equation}\label{normal}
\|f_\epsilon\|_{L^2(\widetilde{\sigma})}^2=\|G_0\|_2^2+O(\epsilon^2)\textrm{ as } \epsilon\rightarrow 0^+.
\end{equation}

Consider the quantity:
\begin{equation}\label{psi}
\Xi(\epsilon)={\mathbf C_F}[\lambda]^6\|f_\epsilon\|_{L^2(\widetilde{\sigma})}^6-\|\widehat{f_\epsilon\widetilde{\sigma}}\|_6^6.
\end{equation}
This is no longer a nonnegative expression by construction like \eqref{functional}. 

Let 
\begin{equation}\label{phi}
\varphi(u):=c_\lambda u e^{-\frac{\lambda u^2}{2}},
\end{equation}
where the constant $c_\lambda:=(\pi/4\lambda^3)^{-1/4}$ is chosen to normalize $\|\varphi\|_2=1$.
Then $\varphi$ satisfies conditions \eqref{conditions}
and $Q(\varphi)=0$.

With this choice of $\varphi$, we claim that the function $\Xi=\Xi(\epsilon)$ has the following property: for every $\lambda>0$, for every $a\in \R$ satisfying \eqref{a}, and for every real-valued smooth $\psi$ satisfying $\psi(y)=O(|y|^5)$ as $|y|\rightarrow 0$, $\Xi$ is a strictly concave function of $\epsilon$ in a sufficiently small half-neighborhood of 0, provided the interval $I$ is chosen sufficiently small (as a function of $\lambda, a$ and $\psi$). Once we prove this we will be able to conclude that ${\mathbf C_F}[\lambda]<{\mathbf C}[\widetilde{\Gamma}]$, and Proposition \ref{optimalconstants}  follows.

Start by noting that $\lim_{\epsilon\rightarrow 0}\Xi(\epsilon)=0$ and $\Xi'(0)=0$. 
 Indeed, one has that 
$$\lim_{\epsilon\rightarrow 0}\Xi(\epsilon)={\mathbf C_F}[\lambda]^6\lim_{\epsilon\rightarrow 0}\|f_\epsilon\|_{L^2(\widetilde{\sigma})}^6-\lim_{\epsilon\rightarrow 0}\|\widehat{f_\epsilon\widetilde{\sigma}}\|_6^6={\mathbf C_F}[\lambda]^6\|G_0\|_2^6-\|G_1\|_6^6=0$$
for every $a\in\R$ and $\lambda>0$. On the other hand, explicit computations show that 
$$\partial_\epsilon|_{\epsilon=0} \|f_\epsilon\|_{L^2(\widetilde{\sigma})}^6=6\|G_0\|_2^4\Re \int G_0\varphi$$
and 
$$\partial_\epsilon|_{\epsilon=0} \|\widehat{f_\epsilon\widetilde{\sigma}}\|_6^6=6\Re\iint |G_1|^4\overline{G_1}\varphi_1.$$
Since  ${\mathbf{II}}=0$ and $G_0 \perp \varphi$, we have that 
\begin{equation}\label{ReI=0}
\Re\iint |G_1|^4\overline{G_1}\varphi_1={\mathbf C_F}[\lambda]\|G_0\|_2^4\Re \int G_0\varphi=0
\end{equation}
and it follows that $\Xi'(0)=0$, as claimed.

Useful information will come from looking at second variations. The strategy will be to compute the second derivatives with respect to $\epsilon$ (at $\epsilon=0$) of good enough approximations to the two terms appearing in the definition of $\Xi$. We start by analyzing the most involved one.

\subsubsection{The term $\|\widehat{f_\epsilon\widetilde{\sigma}}\|_6^6$.}\label{sec:The hard term}

\noindent To make the notation less cumbersome, we introduce the following parametrization:

\begin{displaymath}
\begin{array}{rcl}
{\widetilde{\gamma}}_\epsilon: \epsilon^{-1}\cdot I & \rightarrow & \mathbb{R}^2\\
u & \mapsto &  \Big(u,\widetilde{h}_\epsilon(u)=\frac{\lambda u^2}{2}+a\epsilon^2 u^4+\epsilon^{-2}\psi(\epsilon u)\Big).
\end{array}
\end{displaymath}
Changing variables $y=\epsilon u$, we have that:
\begin{align*}
\widehat{f_\epsilon\widetilde\sigma}(x,t)=&\int_\R f_\epsilon(y)e^{-i(x,t)\cdot(y,h(y))}(1+h'(y)^2)^{1/2}\eta_I(y)dy\\
=&\epsilon^{1/2}\int_\R (G_0+\epsilon\varphi)(u)e^{-i(\epsilon x,\epsilon^2 t)\cdot (u,\widetilde{h}_\epsilon(u))}(1+h'(\epsilon u)^2)^{1/2}\eta_I\Big(\frac{1}{\log\frac{1}{\epsilon}} u\Big)du.
\end{align*}

\noindent Consider an approximate version of
\begin{align*}
w_\epsilon(x,t):=&\epsilon^{-1/2}\widehat{f_\epsilon\widetilde{\sigma}}(\epsilon^{-1}x,\epsilon^{-2}t)\\
=&\int_\R (G_0+\epsilon\varphi)(u)e^{-i(x, t)\cdot (u,\widetilde{h}_\epsilon(u))}(1+h'(\epsilon u)^2)^{1/2}\eta_I\Big(\frac{1}{\log\frac{1}{\epsilon}} u\Big)du
\end{align*}
given by Taylor expanding $J_\epsilon(u):=\sqrt{1+h'(\epsilon u)^2}\in  C^\infty(I)$ and defining
\begin{equation}\label{defvepsil}
v_\epsilon(x,t):=\int_\R (G_0+\epsilon\varphi)(u)e^{-i(x, t)\cdot (u,\widetilde{h}_\epsilon(u))}\Big(1+\frac{\lambda^2}{2} \epsilon^2u^2\Big)\eta_I\Big(\frac{1}{\log\frac{1}{\epsilon}} u\Big)du.
\end{equation}
Also, set
$$g_\epsilon(u):=(G_0+\epsilon\varphi)(u),\;\;\; g^\sharp_\epsilon(u):=u^2(G_0+\epsilon\varphi)(u) \;\textrm{ and }\;d\widetilde{\sigma}_\epsilon(u):=(1+\frac{\lambda^2}{2}\epsilon^2u^2)du.$$

\begin{lemma}\label{propest}
If $I$ is a sufficiently small interval centered at the origin (chosen as a function of $\lambda, a,\psi$ but not $\epsilon$), then
\begin{equation}\label{wf}
\|\widehat{f_\epsilon\widetilde{\sigma}}\|_6^6=\|w_\epsilon\|_6^6=\|v_\epsilon\|_6^6+O(\epsilon^{4})
\end{equation}
and 
\begin{equation}\label{af}
\|f_\epsilon\|_{L^2(\widetilde{\sigma})}^2=\|g_\epsilon\|_{L^2(\widetilde{\sigma}_\epsilon)}^2+O(\epsilon^{4}).
\end{equation}
as $\epsilon\rightarrow 0^+$.
\end{lemma}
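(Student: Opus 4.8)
The plan is to prove both identities by direct estimation of the errors introduced when one passes from the exact objects to their Taylor-truncated surrogates, exploiting the fact that the cutoff $\eta_I((\log\tfrac1\epsilon)^{-1}\,\cdot\,)$ forces $|u|\lesssim \log\tfrac1\epsilon$, so that the suppressed terms carry explicit powers of $\epsilon$ multiplied by harmless powers of $\log\tfrac1\epsilon$, and $\epsilon^{a}(\log\tfrac1\epsilon)^{b}=O(\epsilon^{4})$ as soon as $a>4$.

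For \eqref{af} I would start from $d\widetilde\sigma(y)=(1+\tfrac{\lambda^2}{2}y^2+O(y^4))\eta_I(y)\,dy$ from \eqref{apmeas}, change variables $y=\epsilon u$ as in the definition \eqref{family} of $f_\epsilon$, and write
$\|f_\epsilon\|_{L^2(\widetilde\sigma)}^2=\int_\R |G_0+\epsilon\varphi|^2(u)\,\bigl(1+\tfrac{\lambda^2}{2}\epsilon^2u^2+O(\epsilon^4 u^4)\bigr)\,\eta_I^2\bigl((\log\tfrac1\epsilon)^{-1}u\bigr)\,du$.
The main term is exactly $\|g_\epsilon\|_{L^2(\widetilde\sigma_\epsilon)}^2$ up to the presence of the cutoff; the difference between having the cutoff and not having it is controlled by the rapid Gaussian decay of $G_0$ and $\varphi$ (which kills everything outside $|u|\lesssim\log\tfrac1\epsilon$ with an error $O(\epsilon^{100})$, say, since $e^{-\lambda(\log\tfrac1\epsilon)^2/2}$ beats any power of $\epsilon$), and the $O(\epsilon^4 u^4)$ remainder integrates against the Gaussian to $O(\epsilon^4)$. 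Note one should keep $\eta_I^2$ vs. $\eta_I$ straight, but since $\eta_I\equiv 1$ on a fixed neighborhood of $0$ and the integrand is Gaussian-concentrated near $0$, replacing $\eta_I^2$ by $1$ again costs only $O(\epsilon^{100})$.

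For \eqref{wf} the first equality $\|\widehat{f_\epsilon\widetilde\sigma}\|_6^6=\|w_\epsilon\|_6^6$ is just the change of variables $(x,t)\mapsto(\epsilon^{-1}x,\epsilon^{-2}t)$ together with the definition $w_\epsilon(x,t)=\epsilon^{-1/2}\widehat{f_\epsilon\widetilde\sigma}(\epsilon^{-1}x,\epsilon^{-2}t)$; the Jacobian is $\epsilon^{-3}$ and the factor $\epsilon^{-1/2}$ raised to the sixth is $\epsilon^{-3}$, and these cancel after noting $\|w_\epsilon\|_6^6=\epsilon^{3}\|\widehat{f_\epsilon\widetilde\sigma}(\epsilon^{-1}\cdot,\epsilon^{-2}\cdot)\|_6^6$... one checks the bookkeeping so that it is an identity. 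The substantive part is $\|w_\epsilon\|_6^6=\|v_\epsilon\|_6^6+O(\epsilon^4)$. Here $w_\epsilon-v_\epsilon=\int_\R (G_0+\epsilon\varphi)(u)\,e^{-i(x,t)\cdot(u,\widetilde h_\epsilon(u))}\bigl(J_\epsilon(u)-1-\tfrac{\lambda^2}{2}\epsilon^2u^2\bigr)\eta_I\bigl((\log\tfrac1\epsilon)^{-1}u\bigr)\,du$, and since $J_\epsilon(u)=\sqrt{1+h'(\epsilon u)^2}$ with $h'(y)=\lambda y+O(y^3)$, the bracket is $O(\epsilon^4 u^4)$ uniformly on the support (here is where $I$ must be chosen small as a function of $\lambda,a,\psi$ so that the Taylor remainder estimate for $J_\epsilon$ is valid with a uniform constant). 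Thus $\|w_\epsilon-v_\epsilon\|_\infty\lesssim \epsilon^4\int |G_0+\epsilon\varphi|(u)\,u^4\,du\lesssim \epsilon^4$, and similarly $\|w_\epsilon\|_\infty,\|v_\epsilon\|_\infty\lesssim\|G_0+\epsilon\varphi\|_{L^1}\lesssim 1$; combining these with the pointwise inequality $\bigl|\,|w_\epsilon|^6-|v_\epsilon|^6\,\bigr|\lesssim |w_\epsilon-v_\epsilon|(|w_\epsilon|^5+|v_\epsilon|^5)$ gives $\bigl|\|w_\epsilon\|_6^6-\|v_\epsilon\|_6^6\bigr|\lesssim \epsilon^4$, where one also uses that $w_\epsilon$ and $v_\epsilon$ are supported, in $(x,t)$, in a fixed compact set (because they are Fourier transforms of measures supported on a bounded arc), so the $L^\infty$ bounds on the integrand upgrade to $L^6$ bounds with no loss.

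The step I expect to be the main obstacle is the last one: controlling $\|w_\epsilon\|_6^6-\|v_\epsilon\|_6^6$ requires some integrability of $w_\epsilon$ and $v_\epsilon$ over $\R^2$, and these are \emph{not} compactly supported — $\widehat{f_\epsilon\widetilde\sigma}$ lives on all of $\R^2$, and we already saw in \eqref{CF} that $G_1\in L^6$ only barely. So the real work is a tail estimate: one must show that the $L^6$ mass of $w_\epsilon$ and of $w_\epsilon-v_\epsilon$ outside a large ball is $O(\epsilon^4)$ uniformly, using stationary/non-stationary phase on the oscillatory integral defining $w_\epsilon$ (the phase $(x,t)\cdot(u,\widetilde h_\epsilon(u))$ has at most one stationary point in $u$, with non-degenerate second derivative $\widetilde h_\epsilon''(u)=\lambda+O(\epsilon^2)$ bounded below, so one gets decay $|w_\epsilon(x,t)|\lesssim (1+|(x,t)|)^{-1/2}$ with implied constant controlled by $\|G_0+\epsilon\varphi\|_{C^1}$), together with the smallness of the amplitude $J_\epsilon(u)-1-\tfrac{\lambda^2}{2}\epsilon^2u^2=O(\epsilon^4(\log\tfrac1\epsilon)^4)$ and of its derivatives on the support. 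Since the paper explicitly says "postponing some of the more technical estimates to Appendix 1," I would carry out the elementary $L^\infty$/compact-support-in-$u$ reductions here and defer the stationary-phase tail bound — which is exactly of the type needed — to that appendix, invoking it as a black box in the body of the proof.
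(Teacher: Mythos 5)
Your argument for \eqref{af} and for the first equality in \eqref{wf} (the scaling bookkeeping) is fine, and you have correctly identified the heart of the matter: after the $L^\infty$ reduction you have no way to integrate $|w_\epsilon|^6-|v_\epsilon|^6$ over all of $\R^2$, because $w_\epsilon$ and $v_\epsilon$ are extension operators and are emphatically \emph{not} compactly supported in $(x,t)$ — so the sentence asserting compact support is simply false, as you note yourself one sentence later.

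The gap is not fixed by stationary phase, at least not cheaply. A one-dimensional stationary-phase bound gives $|w_\epsilon(x,t)|\lesssim\langle t\rangle^{-1/2}$ in the region where the phase has a critical point, and that alone is not $L^6(\R^2)$ (the $x$-integral diverges); you would need the full stationary/non-stationary dichotomy over a dyadic decomposition in $x/t$, with uniform control of all the constants in $\epsilon$, which is precisely the (long) content of Proposition \ref{mainestimates} in Appendix 1 — and the paper invokes that machinery only for the genuinely harder task of justifying the second derivative of $\|v_\epsilon\|_6^6$ in $\epsilon$, not for Lemma \ref{propest}.

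What you are missing is that there is a one-line substitute for all of that: both $v_\epsilon$ and the remainder $r_\epsilon=w_\epsilon-v_\epsilon$ are, after undoing the substitutions $u\mapsto\epsilon^{-1}y$, $(x,t)\mapsto(\epsilon^{-1}x,\epsilon^{-2}t)$, extension operators $\widehat{(\cdot)\,\sigma_P}$ applied to explicit $L^2$ amplitudes on the curved arc, and the Tomas--Stein inequality $\|\widehat{g\sigma_P}\|_6\lesssim\|g\|_2$ gives the needed $L^6$ bounds directly and uniformly in $\epsilon$ (the rescaling is an $L^6$-isometry). Concretely, your pointwise bound $|J_\epsilon(u)-1-\tfrac{\lambda^2}{2}\epsilon^2u^2|\lesssim\epsilon^4 u^4$ on the support, fed through Taylor's theorem, exhibits $r_\epsilon$ as $\epsilon^4$ times an extension of a function with $L^2$ norm $O(1)$, whence $\|r_\epsilon\|_6\lesssim\epsilon^4$; likewise $\|v_\epsilon\|_6\lesssim 1$. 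Then the sixth-power expansion and H\"older, $\bigl|\|w_\epsilon\|_6^6-\|v_\epsilon\|_6^6\bigr|\lesssim\|v_\epsilon\|_6^5\|r_\epsilon\|_6+\dots\lesssim\epsilon^4$, closes the argument without any phase analysis. (The cut-off mismatch term $s_\epsilon$ you would also want to bound is handled the same way: its amplitude has Gaussian-small $L^2$ norm $O(\epsilon^N)$ because it is supported in $|u|\gtrsim\log\frac1\epsilon$.)
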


\begin{proof}
By construction, $v_\epsilon$ is just $w_\epsilon$ with the term $J_\epsilon=(1+h'(\epsilon\cdot)^2)^{1/2}$ replaced by its Taylor approximation to order 2. For $\epsilon<1$, set

$$G_\epsilon(u):=(1+\frac{\lambda^2}{2} \epsilon^2u^2)(G_0+\epsilon\varphi)(u)$$
and
$$s_\epsilon(x,t):=-\int_\R G_\epsilon(u)e^{-i(x, t)\cdot\widetilde{\gamma}_\epsilon(u)}\Big(\eta_I(\epsilon u)-\eta_I\Big(\frac{1}{\log\frac{1}{\epsilon}} u\Big)\Big)du.$$
It follows that 
\begin{equation}\label{v3sum}
v_\epsilon(x,t)=\widehat{g_{\epsilon}{\sigma_{P,\epsilon}}}(x,t)+\frac{\lambda^2\epsilon^2}{2}\widehat{{g^\sharp_{\epsilon}}{\sigma_{P,\epsilon}}}(x,t)+s_\epsilon(x,t),
\end{equation}
where $d\sigma_{P,\epsilon}=\eta_I(\epsilon u)du$.
We obtain a uniform estimate for its $L^6$ norm:
\begin{claim}\label{e1}
There exists a constant $C<\infty$ such that
 $\|v_\epsilon\|_6\leq C$, for every sufficiently small $\epsilon>0$. 
\end{claim}

\noindent The claimed uniformity in $\epsilon$ needs to be justified: it follows from undoing the substitutions $y\mapsto \epsilon^{-1}y$ and $(x,t)\mapsto(\epsilon^{-1}x,\epsilon^{-2}t)$. Indeed,

\begin{align*}
\widehat{g_{\epsilon}{\sigma_{P,\epsilon}}}(x,t)=&\int g_{\epsilon}(u) e^{-i(x,t)\cdot(u,\widetilde{h}_\epsilon(u))}\eta_I(\epsilon u)du\\
=&\epsilon^{-1/2}\int\epsilon^{-1/2}(G_0+\epsilon\varphi)(\epsilon^{-1}y)e^{-(\epsilon^{-1}x,\epsilon^{-2}t)\cdot(y,h(y))}\eta_I(y)dy\\
=&\epsilon^{-1/2} \widehat{f_\epsilon\sigma_P}(\epsilon^{-1}x,\epsilon^{-2}t),
\end{align*}
where $d\sigma_{P}(y):=\eta_I(y)dy$. Since $\|\epsilon^{-1/2} \widehat{f_\epsilon\sigma_P}(\epsilon^{-1}\cdot,\epsilon^{-2}\cdot)\|_6=\| \widehat{f_\epsilon\sigma_P}\|_6$, we have that
$$\|\widehat{g_{\epsilon}\sigma_{P,\epsilon}}\|_6=\|\epsilon^{-1/2} \widehat{f_\epsilon\sigma_P}(\epsilon^{-1}\cdot,\epsilon^{-2}\cdot)\|_6=\| \widehat{f_\epsilon\sigma_P}\|_6\lesssim \|f_\epsilon\|_2\leq C\|G_0\|_2,$$
for some $C<\infty$ independent of $\epsilon$, as claimed. Proceed similarly to get a bound $O(\epsilon^2)$ for the term involving ${g^\sharp_{\epsilon}}$. 
Finally, define 
$$g^\flat_{\epsilon}(u):=-G_\epsilon(u)\Big(\eta_I(\epsilon u)-\eta_I\Big(\frac{1}{\log\frac{1}{\epsilon}} u\Big)\Big)$$ 
and notice that $s_\epsilon(x,t)=\widehat{g^\flat_{\epsilon}\sigma_{P,\epsilon}}(x,t)$. Estimate:

\begin{align*}
\|g^\flat_{\epsilon}\|_2^2&\asymp\int_{\log{\epsilon^{-1}}\lesssim|u|\lesssim\epsilon^{-1}}\Big(1+c_\lambda\epsilon u+\frac{\lambda^2\epsilon^2}{2} u^2+c_\lambda\frac{\lambda^2\epsilon^3}{2} u^3\Big)^2e^{-\lambda u^2}du\\
&\lesssim e^{-C(\log{\epsilon^{-1}})^{2}}\lesssim_{N}\epsilon^N,\textrm{ for every } N\in\N.
\end{align*}
Eliminating the substitutions as before, we conclude that
$$\|s_\epsilon\|_6\lesssim \|g^\flat_{\epsilon}\|_2\lesssim\epsilon^N,\;\;\; \forall N\in\N,$$
where the implicit constants are all independent of $\epsilon$. Thus the contribution of the third summand is likewise small, and this concludes the verification of Claim \ref{e1}.

If we choose the interval $I$ small enough (as a function of $\lambda, a $ and $\psi$) such that 
$$y\in I\Rightarrow |h'(y)|=|\lambda y+4a y^3+\psi'(y)|\leq 1,$$
then the remainder
$$r_\epsilon(x,t):=w_\epsilon(x,t)-v_\epsilon(x,t)$$
will satisfy favorable bounds. By Taylor's theorem we have that
\begin{equation}\label{rbounds}
|r_\epsilon(x,t)|\leq C\epsilon^4 \Big|\int_\R J_\epsilon^{''''}(c_0)u^4(G_0+\epsilon\varphi)(u)e^{-i(x, t)\cdot \widetilde{\gamma_\epsilon}(u)}\eta_I\Big(\frac{1}{\log\frac{1}{\epsilon}} u\Big)du\Big|,
\end{equation}
for some $c_0\in (-\epsilon u,\epsilon u)$ and some absolute constant $C<\infty$.
An argument analogous to the one used to establish Claim \ref{e1} yields the following estimate for the remainder term:

\begin{claim}\label{e2}
There exists a constant $C<\infty$ such that
 $\|r_\epsilon\|_6\leq C\epsilon^4$, for every sufficiently small $\epsilon>0$. 
 \end{claim}

To finish the proof of Lemma \ref{propest},
notice that
$\|w_\epsilon\|_6^6=\|v_\epsilon+r_\epsilon\|_6^6=\|v_\epsilon\|_6^6+63\textrm{  terms,}$
all of which are $O(\epsilon^4)$ as $\epsilon\rightarrow 0^+$. This is an immediate consequence of H\"{o}lder's inequality, together with Claims \ref{e1} and \ref{e2}: for example,
$$\iint|v_\epsilon|^4\overline{v_\epsilon}r_\epsilon dxdt\leq\|v_\epsilon\|_6^5\|r_\epsilon\|_6\leq C\epsilon^4.$$
All other terms can be dealt with in a similar way, and the result follows.
The verification of \eqref{af} is easier and we omit the details.
\end{proof}
Since we are interested in second variations with respect to $\epsilon$ of the $L^6$ norm $\|\widehat{f_\epsilon\widetilde{\sigma}}\|_6^6$ at $\epsilon=0$, it will suffice, in light of \eqref{wf}, to analyze $\|v_\epsilon\|_6^6$.
Start by noting that
 \begin{align}
&v_0(x,t)=G_1(x,t)\label{p1}\\
&\partial_\epsilon|_{\epsilon=0} v_\epsilon(x,t)=\varphi_1(x,t)\label{p2}\\
&\partial_\epsilon^2|_{\epsilon=0} v_\epsilon(x,t)=\lambda^2 G_2(x,t)-2 i t a G_3(x,t)\label{p3}
\end{align}
where, as before, 
$$G_1(x,t):=\widehat{G_0\sigma_{\P_\lambda}}(-x,t)=\Big(\frac{2\pi}{\lambda}\Big)^{1/2}(1+it)^{-1/2}e^{-\frac{x^2}{2 \lambda(1+it)}}$$ 
and 
$$\varphi_1(x,t):=\widehat{\varphi\sigma_{\P_\lambda}}(-x,t)
= \frac{ic_\lambda}{\lambda}\Big(\frac{2\pi}{\lambda}\Big)^{1/2}(1+it)^{-3/2}xe^{-\frac{x^2}{2\lambda(1+it)}}.$$

\noindent Additionally,
\begin{align*}
G_2(x,t)&:=[(u^2G_0)\sigma_{\P_\lambda}]^\wedge(-x,t)\\
=&\int_\R y^2G_0(y)e^{-it\frac{\lambda y^2}{2}}e^{ixy}dy=2 \lambda^{-1} i\partial_t G_1(x,t)\\
=&\Big(\lambda^{-1}(1+it)^{-1}-\lambda^{-2} x^2(1+it)^{-2}\Big)G_1(x,t)
\end{align*}
and 
\begin{align*}
G_3(x,t)&:=[(u^4G_0)\sigma_{\P_\lambda}]^\wedge(-x,t)\\
=&\int_\R y^4G_0(y)e^{-it\frac{\lambda y^2}{2}}e^{ixy}dy=-4 \lambda^{-2}\partial_t^2 G_1(x,t)\\
=&\Big(3\lambda^{-2}(1+it)^{-2}-6\lambda^{-3}x^2(1+it)^{-3}+\lambda^{-4}x^4(1+it)^{-4}\Big) G_1(x,t).
\end{align*}

\begin{remark}
The calculations to follow, which lead to formula \eqref{w} below, are largely formal and need to be justified. In particular, the fact that $\epsilon\mapsto \|v_\epsilon\|_6^6$ is twice differentiable at $\epsilon=0$ is proved in Appendix 1.
\end{remark}

\noindent As a first step in the direction of computing $\partial_\epsilon^2|_{\epsilon=0}\|v_{\epsilon}\|_6^6$, we look at $\partial_\epsilon |v_\epsilon|^6(x,t)$:
\begin{align*}
\partial_\epsilon |v_\epsilon|^6=&\partial_\epsilon(v_\epsilon^3\overline{v_\epsilon}^3)\\
=&3 v_\epsilon^2 (\partial_\epsilon v_\epsilon)\overline{v_\epsilon}^3+3 \overline{v_\epsilon}^2 (\partial_\epsilon \overline{v_\epsilon}) {v_\epsilon}^3.
\end{align*}
Recalling \eqref{p1}$-$\eqref{p2}, it follows that
$$\partial_\epsilon|_{\epsilon=0} |v_\epsilon|^6=6\Re(G_1^2\varphi_1\overline{G_1}^3)$$
and so \eqref{ReI=0} implies 
$$\partial_\epsilon|_{\epsilon=0} \|v_\epsilon\|_6^6=6\Re\iint |G_1|^4\overline{G_1}\varphi_1dxdt=0,$$
which we already knew.
We differentiate once again and obtain
$$\partial_\epsilon^2 |v_\epsilon|^6=2\Re\Big(6 v_\epsilon (\partial_\epsilon v_\epsilon)^2\overline{v_\epsilon}^3+3v_\epsilon^2(\partial_\epsilon^2 v_\epsilon)\overline{v_\epsilon}^3+9 |v_\epsilon|^4 |\partial_\epsilon v_\epsilon|^2\Big),$$
which at $\epsilon=0$ equals (recall \eqref{p1}$-$\eqref{p3})

\begin{align*}
\partial_\epsilon^2|_{\epsilon=0} |v_\epsilon|^6=&2\Re\Big(6 v_0 (\partial_\epsilon|_{\epsilon=0} v_\epsilon)^2\overline{v_0}^3+3v_0^2(\partial_\epsilon^2|_{\epsilon=0} v_\epsilon)\overline{v_0}^3+9 |v_0|^4 |\partial_\epsilon|_{\epsilon=0} v_\epsilon|^2\Big)\\
=&2\Re\Big(6 G_1\varphi_1^2\overline{G_1}^3+3G_1^2(\lambda^2 G_2-2 i t a G_3)\overline{G_1}^3+9|G_1|^4|\varphi_1|^2\Big).
\end{align*}
Thus 

\begin{multline}
\frac{1}{2}\partial_\epsilon^2|_{\epsilon=0} \|v_\epsilon\|_6^6=9 \iint |G_1|^4|\varphi_1|^2dxdt+6\Re \iint |G_1|^2\overline{G_1}^2\varphi_1^2dxdt\label{w}\\
+3\lambda^2\Re\iint |G_1|^4\overline{G_1}G_2dxdt-6a\iint \Re \Big\{it |G_1|^4\overline{G_1}G_3\Big\}dxdt.
\end{multline}

\noindent The first two summands on the right hand side of  \eqref{w} appear (with opposite signs) in the expression \eqref{O3} for the quadratic form $Q$. The last two summands, on the other hand, can be explicitly evaluated, and that is our next task. 
The proofs of the following claims are deferred to  Appendix 2:

\begin{claim}\label{c1}
\begin{equation}\label{claim1}
3\lambda^2\Re\iint_{\R^2} |G_1(x,t)|^4\overline{G_1(x,t)}G_2(x,t) dxdt=\frac{3}{2}\pi^{3/2}\lambda^{-1/2}{\mathbf C_F}[\lambda]^6.
\end{equation}
\end{claim}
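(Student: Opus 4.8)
The plan is to reduce the two-dimensional integral in \eqref{claim1} to elementary one-dimensional integrals by exploiting the explicit formulas for $G_1$ and $G_2$ recorded above. First I would substitute $G_2(x,t)=\bigl(\lambda^{-1}(1+it)^{-1}-\lambda^{-2}x^2(1+it)^{-2}\bigr)G_1(x,t)$, so that
\[
|G_1|^4\overline{G_1}G_2=\Bigl(\frac{1}{\lambda(1+it)}-\frac{x^2}{\lambda^2(1+it)^2}\Bigr)|G_1|^6 ,
\]
and then use $\Re\bigl(\lambda(1+it)\bigr)^{-1}=\bigl(\lambda(1+t^2)\bigr)^{-1}$ to get $|G_1(x,t)|^2=\frac{2\pi}{\lambda}(1+t^2)^{-1/2}e^{-x^2/(\lambda(1+t^2))}$, hence $|G_1|^6=\bigl(\frac{2\pi}{\lambda}\bigr)^3(1+t^2)^{-3/2}e^{-3x^2/(\lambda(1+t^2))}$. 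Since $|(1+it)^{-j}|=(1+t^2)^{-j/2}$, the resulting integrand is dominated in modulus by an integrable function on $\R^2$, so Fubini's theorem applies and the $x$-integration may be carried out first.

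Next I would evaluate the two Gaussian $x$-integrals, namely $\int_\R e^{-3x^2/(\lambda(1+t^2))}\,dx=\bigl(\pi\lambda(1+t^2)/3\bigr)^{1/2}$ and $\int_\R x^2 e^{-3x^2/(\lambda(1+t^2))}\,dx=\frac{\lambda(1+t^2)}{6}\bigl(\pi\lambda(1+t^2)/3\bigr)^{1/2}$. Substituting these and collecting the powers of $(1+t^2)$, the left-hand side of \eqref{claim1} becomes
\[
3\lambda\Bigl(\frac{2\pi}{\lambda}\Bigr)^3\Bigl(\frac{\pi\lambda}{3}\Bigr)^{1/2}\Bigl(\Re\int_\R\frac{dt}{(1+it)(1+t^2)}-\frac{1}{6}\Re\int_\R\frac{dt}{(1+it)^2}\Bigr).
\]
Since $\Re(1+it)^{-1}=(1+t^2)^{-1}$, the first $t$-integral equals $\int_\R(1+t^2)^{-2}\,dt=\pi/2$; and $\Re(1+it)^{-2}=(1-t^2)(1+t^2)^{-2}$ integrates to zero over $\R$, because it equals $2\int_\R(1+t^2)^{-2}\,dt-\int_\R(1+t^2)^{-1}\,dt=\pi-\pi=0$. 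Thus the $x^2$-term contributes nothing, and the whole expression collapses to $3\lambda\bigl(\frac{2\pi}{\lambda}\bigr)^3\bigl(\frac{\pi\lambda}{3}\bigr)^{1/2}\cdot\frac{\pi}{2}=12\,\pi^{9/2}3^{-1/2}\lambda^{-3/2}$.

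Finally I would match this against the right-hand side of \eqref{claim1}: from ${\mathbf C_F}[\lambda]={\mathbf C_F}[1]\lambda^{-1/6}$ and ${\mathbf C_F}[1]=(2\pi)^{1/2}3^{-1/12}$ one has ${\mathbf C_F}[\lambda]^6=(2\pi)^3 3^{-1/2}\lambda^{-1}$, whence $\frac32\pi^{3/2}\lambda^{-1/2}{\mathbf C_F}[\lambda]^6=12\,\pi^{9/2}3^{-1/2}\lambda^{-3/2}$, exactly the value just computed. I do not expect a genuine obstacle here: the argument is essentially bookkeeping. The only steps deserving a word of care are the appeal to Fubini (justified by the modulus bound above, which makes the full integrand absolutely integrable over $\R^2$) and the vanishing identity $\int_\R\Re(1+it)^{-2}\,dt=0$, which is precisely what makes the $x^2$-contribution disappear and leaves the clean closed form.
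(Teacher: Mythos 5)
Your computation is correct and follows essentially the same route as the paper: substitute the explicit formula for $G_2$ in terms of $G_1$, reduce by Fubini to Gaussian $x$-integrals and elementary $t$-integrals, and exploit the vanishing of $\int_\R\Re(1+it)^{-2}\,dt$ to kill the $x^2$-term. The only cosmetic difference is that the paper rescales $y=x/(1+t^2)^{1/2}$ to fully factor the double integral into a product of two one-dimensional integrals, while you perform the $x$-integration first keeping the $t$-dependence; the arithmetic and the final matching against ${\mathbf C_F}[\lambda]^6=(2\pi)^33^{-1/2}\lambda^{-1}$ check out in both cases.
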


\begin{claim}\label{c2}
\begin{equation}\label{claim2}
-6a\iint_{\R^2} \Re\Big\{it|G_1(x,t)|^4\overline{G_1(x,t)}G_3(x,t)\Big\} dxdt=-4a\pi^{3/2}\lambda^{-7/2}{\mathbf C_F}[\lambda]^6.
\end{equation}
\end{claim}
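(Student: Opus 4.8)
The plan is to reduce the identity \eqref{claim2} to a product of elementary Gaussian moments in $x$ and elementary rational integrals in $t$. First I would substitute the explicit formulas for $G_1$ and $G_3$ recorded above. Since
$$|G_1|^4\overline{G_1}G_3=|G_1|^6\cdot\frac{G_3}{G_1},\qquad \frac{G_3}{G_1}=\frac{3}{\lambda^2(1+it)^2}-\frac{6x^2}{\lambda^3(1+it)^3}+\frac{x^4}{\lambda^4(1+it)^4},$$
and since $|G_1(x,t)|^6=\Big(\frac{2\pi}{\lambda}\Big)^3(1+t^2)^{-3/2}\exp\!\Big(-\frac{3x^2}{\lambda(1+t^2)}\Big)$ is real and nonnegative, the operator $\Re\{\cdot\}$ acts only on $it\cdot(G_3/G_1)$. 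Expanding $(1-it)^k$ and taking real parts gives
$$\Re\frac{it}{(1+it)^2}=\frac{2t^2}{(1+t^2)^2},\qquad \Re\frac{it}{(1+it)^3}=\frac{3t^2-t^4}{(1+t^2)^3},\qquad \Re\frac{it}{(1+it)^4}=\frac{4t^2(1-t^2)}{(1+t^2)^4}.$$

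Next I would carry out the $x$-integration. After the substitution $x=(\lambda(1+t^2)/3)^{1/2}\xi$ the exponential becomes $e^{-\xi^2}$ and each of the three terms collapses to one of the standard moments $\int_\R\xi^{2m}e^{-\xi^2}\,d\xi\in\{\sqrt\pi,\frac12\sqrt\pi,\frac34\sqrt\pi\}$ (for $m=0,1,2$) times an explicit power of $(1+t^2)$. The powers of $(1+t^2)$ coming from $|G_1|^6$, from the Jacobian of the substitution, and from the Gaussian moments combine so that, writing $C_0:=(2\pi/\lambda)^3(\lambda/3)^{1/2}\pi^{1/2}$, one is left with
$$\iint_{\R^2}\Re\{it\,|G_1|^4\overline{G_1}G_3\}\,dx\,dt=\frac{C_0}{\lambda^{2}}\int_\R\frac{\frac{10}{3}t^2+\frac{2}{3}t^4}{(1+t^2)^3}\,dt.$$
Substituting $t^2=(1+t^2)-1$ turns this into a linear combination of $\int_\R(1+t^2)^{-n}\,dt$ for $n=1,2,3$, whose values are $\pi$, $\pi/2$ and $3\pi/8$; the net result is $\frac{2}{3}C_0\pi\,\lambda^{-2}$.

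Finally, the same substitution applied to $\iint_{\R^2}|G_1|^6\,dx\,dt$ gives $C_0\pi$, and this quantity equals $\pi^{3/2}\lambda^{-3/2}{\mathbf C_F}[\lambda]^6$ because $\|G_1\|_6^6={\mathbf C_F}[\lambda]^6\|G_0\|_2^6$ by \eqref{CF} while $\|G_0\|_2^2=(\pi/\lambda)^{1/2}$; thus \eqref{claim2} is equivalent to the compact identity $\iint_{\R^2}\Re\{it\,|G_1|^4\overline{G_1}G_3\}\,dx\,dt=\frac{2}{3}\lambda^{-2}\iint_{\R^2}|G_1|^6\,dx\,dt$, and multiplying by $-6a$ produces the asserted value $-4a\pi^{3/2}\lambda^{-7/2}{\mathbf C_F}[\lambda]^6$. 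The only real obstacle here is organizational rather than conceptual: one must track the powers of $(1+t^2)$ generated at each stage and the signs in the three expressions $\Re(it(1+it)^{-k})$ so that no term is dropped. Because the computation is finite and elementary it is relegated to Appendix 2, where the companion identity \eqref{claim1} is established by exactly the same scheme, with the weight $G_2$ (and no extra factor of $it$) in place of $itG_3$.
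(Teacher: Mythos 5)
Your proposal is correct and follows essentially the same route as the paper: substitute the explicit closed forms for $G_1,G_3$, take real parts of $it(1+it)^{-k}$, factor the $x$-integral into Gaussian moments via the scaling $x=(1+t^2)^{1/2}y$, and reduce to elementary $\int(1+t^2)^{-n}\,dt$. The only cosmetic difference is that you aggregate the three contributions into a single rational $t$-integrand and normalize against $\|G_1\|_6^6$, whereas the paper evaluates the three pieces $\mathbf{I},\mathbf{II},\mathbf{III}$ separately (observing that the middle one vanishes); the arithmetic is identical and both give $\frac{2}{3}\pi^{3/2}\lambda^{-7/2}{\mathbf C_F}[\lambda]^6$ before multiplying by $-6a$.
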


\noindent It follows from \eqref{w}, \eqref{claim1} and \eqref{claim2} that

\begin{multline}
\frac{1}{2}\partial_\epsilon^2|_{\epsilon=0} \|v_\epsilon\|_6^6=  9\iint |G_1|^4|\varphi_1|^2 dxdt+6\Re\iint |G_1|^2\overline{G_1}^2\varphi_1^2 dxdt\label{w2}\\
+\frac{3}{2}\pi^{3/2}\lambda^{-1/2}{\mathbf C_F}[\lambda]^6-4a\pi^{3/2}\lambda^{-7/2}{\mathbf C_F}[\lambda]^6.
\end{multline}

\subsubsection{The term $\|f_\epsilon\|_{L^2(\widetilde{\sigma})}^6$}
In view of \eqref{af}, it will be enough to compute the approximate expression
$\|g_\epsilon\|_{L^2(\widetilde{\sigma}_{\epsilon})}^2.$
Since $G_0\perp\varphi$, we have that 
\begin{align*}
\|g_\epsilon\|_{L^2(\widetilde{\sigma}_\epsilon)}^2=&\int_\R |(G_0+\epsilon\varphi)(u)|^2(1+\frac{\lambda^2}{2}u^2\epsilon^2)du\\
=&\int|G_0(u)|^2du+\Big(\int |\varphi(u)|^2du+ \frac{\lambda^2}{2}\int u^2|G_0(u)|^2du\Big)\epsilon^2\\
&\qquad\qquad\qquad +\Big(\lambda^2\Re\int u^2G_0(u)\varphi(u)du\Big)\epsilon^3+\Big(\frac{\lambda^2}{2}\int u^2|\varphi(u)|^2du\Big)\epsilon^4,
\end{align*}
which in turn implies $\partial_\epsilon|_{\epsilon=0} \|g_\epsilon\|_{L^2(\widetilde{\sigma}_\epsilon)}^2=0$
and
$$\partial_\epsilon^2|_{\epsilon=0} \|g_\epsilon\|_{L^2(\widetilde{\sigma}_\epsilon)}^6=6\|G_0\|_{2}^4\Big(\int |\varphi(u)|^2du+\frac{\lambda^2}{2}\int u^2|G_0(u)|^2du\Big).$$
One last computation shows that
\begin{align*}
6\|G_0\|_2^4\Big(\frac{\lambda^2}{2}\int u^2|G_0(u)|^2du\Big)=&{3\lambda^2}\Big(\int_{-\infty}^\infty e^{-\lambda u^2}du\Big)^2\Big(\int_{-\infty}^\infty u^2e^{-\lambda u^2}du\Big)\\
=& {3\lambda^2}\frac{\pi}{\lambda}\frac{\pi^{1/2}}{2\lambda^{3/2}}  =\frac{3}{2}\pi^{3/2}\lambda^{-1/2},
\end{align*}
and so

\begin{equation}\label{f}
\frac{1}{2}\partial_\epsilon^2|_{\epsilon=0}\|g_\epsilon\|_{L^2(\widetilde{\sigma}_\epsilon)}^6=3\|G_0\|_2^4\int|\varphi(u)|^2du+\frac{3}{4}\pi^{3/2}\lambda^{-1/2}.
\end{equation}

\subsubsection{Putting it all together}
Using the approximations \eqref{wf} and \eqref{af} given by Lemma \ref{propest}, we get that:

\begin{equation*}
\frac{\Xi''(0)}{2}={\mathbf C_F}[\lambda]^6\frac{1}{2}\partial_\epsilon^2|_{\epsilon=0}\|g_\epsilon\|_{L^2(\widetilde{\sigma}_\epsilon)}^6-\frac{1}{2}\partial_\epsilon^2|_{\epsilon=0}\|v_\epsilon\|_6^6.
\end{equation*}
The main terms in these approximations have been explicitly computed in \eqref{w2} and \eqref{f}. We obtain:

\begin{align*}
\frac{\Xi''(0)}{2}=3{\mathbf C_F}[\lambda]^6&\|G_0\|_2^4\int|\varphi(s)|^2ds+\frac{3}{4}{\mathbf C_F}[\lambda]^6\pi^{3/2}\lambda^{-1/2}- 9\iint |G_1|^4|\varphi_1|^2 dxdt\\
&-6\Re\iint |G_1|^2\overline{G_1}^2\varphi_1^2 dxdt-\frac{3}{2}\pi^{3/2}\lambda^{-1/2}{\mathbf C_F}[\lambda]^6
+4a\pi^{3/2}\lambda^{-7/2}{\mathbf C_F}[\lambda]^6.\\
\end{align*}
Recalling the definition \eqref{O3} of the quadratic form $Q$ and the fact that $G_0\perp\varphi$,

\begin{equation*}
\frac{\Xi''(0)}{2}=Q(\varphi)-\frac{3}{4}\pi^{3/2}\lambda^{-1/2}{\mathbf C_F}[\lambda]^6+4a\pi^{3/2}\lambda^{-7/2}{\mathbf C_F}[\lambda]^6.
\end{equation*}
It follows that
$$\Xi(\epsilon)=\Big(Q(\varphi)-\frac{3}{4}\pi^{3/2}\lambda^{-1/2}{\mathbf C_F}[\lambda]^6+4a\pi^{3/2}\lambda^{-7/2}{\mathbf C_F}[\lambda]^6\Big) \epsilon^2+O(\epsilon^{3})$$
for sufficiently small $\epsilon>0$, and so $\Xi$ is strictly concave in a neighborhood of 0 if and only if 
$$Q(\varphi)-\frac{3}{4}\pi^{3/2}\lambda^{-1/2}{\mathbf C_F}[\lambda]^6+4a\pi^{3/2}\lambda^{-7/2}{\mathbf C_F}[\lambda]^6<0,$$
that is, if and only if 
$$a<\frac{3}{2}\Big(\frac{\lambda}{2}\Big)^3-\frac{Q(\varphi)}{4\pi^{3/2}{\mathbf C_F}[\lambda]^6} \lambda^{7/2}.$$
The right hand side of this expression equals $\frac{3}{2}\Big(\frac{\lambda}{2}\Big)^3$ since $Q(\varphi)=0$.

We have proved that, for the choice of $\varphi$ given by \eqref{phi}, for every $\lambda>0$, for every $a\in \R$ satisfying \eqref{a} and for every real-valued smooth $\psi$ satisfying $\psi(y)=O(|y|^5)$ as $|y|\rightarrow 0$, $\Xi$ is a strictly concave function of $\epsilon$, for sufficiently small $\epsilon$. In particular, $\Xi(\epsilon)<0$, which is equivalent to
$${\mathbf C_F}[\lambda]^6\|f_\epsilon\|_{L^2(\widetilde{\sigma})}^6<\|\widehat{f_\epsilon\widetilde{\sigma}}\|_6^6.$$
Together with \eqref{TS2}, this implies
$${\mathbf C_F}[\lambda]<{\mathbf C}[\widetilde{\Gamma}].$$
Trivially, ${\mathbf C}[\widetilde{\Gamma}]\leq {\mathbf C}[\Gamma]$, and this finishes the proof of Proposition \ref{optimalconstants} modulo the work deferred to the Appendices.
 
\begin{remark} 
We have been working under the additional assumption that the curvature $\kappa$ of $\Gamma$ does not attain a global minimum at one of its endpoints. This represents no loss of generality. Indeed, if that were not the case and, say, $\kappa$ attained a global minimum at $p_0=\gamma(0)$, then we could perform an identical variational calculation with functions $f_\epsilon$ defined in a similar way to \eqref{family} but supported instead on small neighborhoods $\widetilde{\Gamma}_\epsilon$ of points $p_\epsilon=\gamma(s_\epsilon)\in\Gamma$ with $|s_\epsilon|\asymp\epsilon$. It is straightforward to check that the  computation carries through. We omit the details.
\end{remark}

\section{The end of the proof}\label{sec:The end of the proof}

Recall that by $T_{s,r}$ we mean the adjoint Fourier restriction operator on a cap $\C=\C(s,r)\subset\Gamma$. The following (easy) estimate is the last one we need in order to finish the argument:

\begin{lemma}\label{limlim}
If a sequence $\{f_n\}\subset L^2(\sigma)$ concentrates at a point $\gamma(s)\in\Gamma$, then
$$\lim_{n\rightarrow\infty}\|\widehat{f_n\sigma}\|_6\leq\lim_{r\rightarrow 0^+}\lim_{n\rightarrow\infty}  \|T_{s,r} f_n\|_6.$$
\end{lemma}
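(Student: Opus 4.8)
The plan is to isolate the part of $f_n$ living on the shrinking cap $\C(s,r)$ and to discard the rest using the concentration hypothesis. Fix $r>0$ and decompose $f_n=f_n^{(r)}+g_n^{(r)}$, where $f_n^{(r)}:=f_n\chi_{\C(s,r)}$ and $g_n^{(r)}:=f_n-f_n^{(r)}$. Because the operator $T_{s,r}$ depends only on the restriction of its argument to the cap $\C(s,r)$, one has $T_{s,r}f_n=\widehat{f_n^{(r)}\sigma}$, whence $\widehat{f_n\sigma}=T_{s,r}f_n+\widehat{g_n^{(r)}\sigma}$. The triangle inequality in $L^6(\R^2)$ together with the Tomas--Stein inequality \eqref{TS} then gives, for every $n$ and every fixed $r>0$,
\[
\big|\,\|\widehat{f_n\sigma}\|_6-\|T_{s,r}f_n\|_6\,\big|\le\|\widehat{g_n^{(r)}\sigma}\|_6\le{\mathbf C}[\Gamma]\,\|g_n^{(r)}\|_{L^2(\sigma)}.
\]

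Next I would feed in the concentration hypothesis. Since $\|g_n^{(r)}\|_{L^2(\sigma)}^2=\int_{|s'-s|\ge r}|f_n(\gamma(s'))|^2\,ds'$, the definition of concentration at $\gamma(s)$ says precisely that this quantity tends to $0$ as $n\to\infty$, for each fixed $r>0$. Combining this with the displayed bound, for every fixed $r>0$ the sequences $\{\|\widehat{f_n\sigma}\|_6\}$ and $\{\|T_{s,r}f_n\|_6\}$ have the same asymptotic behaviour: in particular $\lim_{n\to\infty}\|T_{s,r}f_n\|_6$ exists if and only if $\lim_{n\to\infty}\|\widehat{f_n\sigma}\|_6$ does, and then they coincide (and in any case $\limsup_n\|\widehat{f_n\sigma}\|_6=\limsup_n\|T_{s,r}f_n\|_6$). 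Consequently the inner limit is independent of $r$, the outer limit $r\to0^+$ is trivial, and one obtains
\[
\lim_{r\to0^+}\lim_{n\to\infty}\|T_{s,r}f_n\|_6=\lim_{n\to\infty}\|\widehat{f_n\sigma}\|_6,
\]
which is an equality and, in particular, implies the claimed inequality.

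I do not expect any genuine obstacle: the statement is essentially a one-line consequence of an orthogonal-type splitting plus Tomas--Stein. The only points demanding a little care are (i) observing that $T_{s,r}f_n$ really equals $\widehat{f_n^{(r)}\sigma}$, so that $\widehat{f_n\sigma}-T_{s,r}f_n$ is controlled purely by the $L^2$ mass of $f_n$ outside the cap $\C(s,r)$, and (ii) if one prefers not to assume that $\lim_n\|\widehat{f_n\sigma}\|_6$ exists a priori, recasting the last step with $\limsup$ and $\liminf$. Neither affects the argument in the intended application, where $\{f_n\}$ is an extremizing sequence and $\|\widehat{f_n\sigma}\|_6\to{\mathbf C}[\Gamma]$.
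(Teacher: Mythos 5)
Your argument is correct and is essentially the proof in the paper: both split off the mass of $f_n$ outside the cap $\C(s,r)$, control $\widehat{f_n\sigma}-T_{s,r}f_n=\widehat{(f_n\chi_{\Gamma\setminus\C(s,r)})\sigma}$ via Tomas--Stein by the $L^2$ mass off the cap, and let concentration kill that term for each fixed $r$. The only (harmless) difference is that you push on to the stronger conclusion that the two limits coincide, whereas the paper only records the one-sided inequality it needs.
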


\begin{proof}
Set $Tf:=\widehat{f\sigma}$, and let $r>0$ be arbitrary.  
The Tomas-Stein inequality implies 
$$\|(T-T_{s,r})f_n\|_6\lesssim\Big(\int_{\Gamma\setminus\C(s,r)} |f_n|^2 d\sigma\Big)^{1/2}.$$
It follows that
\begin{align*}
\|T f_n\|_6&\leq \|T_{s,r}f_n\|_6+\|(T-T_{s,r})f_n\|_6\\
&\leq\|T_{s,r}f_n\|_6+C\Big(\int_{\Gamma\setminus\C(s,r)} |f_n|^2 d\sigma\Big)^{1/2}.
\end{align*}
Since $\{f_n\}$ concentrates at $\gamma(s)$, we have that
$$\lim_{n\rightarrow\infty}\|T f_n\|_6\leq \lim_{n\rightarrow\infty}\|T_{s,r}f_n\|_6+0$$ for every $r>0$. The result follows.
\end{proof}

We can now prove that condition \eqref{concentrates} in Proposition \ref{concentrationcompactness} cannot happen i.e. that the sequence  $\{F_n\}$ promised by that proposition cannot concentrate. Suppose it did concentrate at some point $\gamma(s)\in\Gamma$, and assume as before that $\|F_n\|_2\rightarrow 1$. By Corollary \ref{Fconcmincurv} we know that $\kappa(\gamma(s))=\lambda$. We may again assume that $\gamma(s)$ is not an endpoint of $\Gamma$. Then, by Lemma \ref{limlim} and Proposition \ref{limoc},

$${\mathbf C}[\Gamma]=\lim_{n\rightarrow\infty} \|\widehat{F_n\sigma}\|_6\leq\lim_{r\rightarrow 0^+}\lim_{n\rightarrow\infty}\|T_{s,r} F_n\|_6\leq\lim_{r\rightarrow 0^+}\|T_{s,r}\|=
{\mathbf C_F}[\lambda],$$
a contradiction to Proposition \ref{optimalconstants}. This concludes the proof of Theorem \ref{main}.

\section{Appendix 1:  $\| v_\epsilon\|_6^6$ is twice differentiable at $\epsilon=0$}
 
\noindent Recall that 
\begin{equation*}
v_\epsilon(x,t)=\int_\R (G_0+\epsilon\varphi)(u)e^{-i(x, t)\cdot (u,\frac{\lambda u^2}{2}+a\epsilon^2 u^4+\epsilon^{-2}\psi(\epsilon u))}\Big(1+\frac{\lambda^2}{2} \epsilon^2u^2\Big)\eta_I\Big(\frac{1}{\log\frac{1}{\epsilon}} u\Big)du,
\end{equation*}
where $0<\epsilon\leq\epsilon_0$, $\lambda=\min_\Gamma \kappa$, $G_0(u)=e^{-\lambda u^2/2}$, $\varphi(u)= c_\lambda u e^{-\lambda u^2/2}$, $a\in [\lambda^3/8,3\lambda^3/16)$, $\psi$ is a real-valued smooth function satisfying $\psi(y)=O(|y|^5)$ as $|y|\rightarrow 0$, and $\eta_I$ is a mollified version of the characteristic function of the interval $I$.

 The main goal of this appendix is to prove the following: 
\begin{proposition}\label{d2epsilon}
The function $\epsilon\mapsto \|v_\epsilon\|_{L^6(\R^2)}^6$ is twice differentiable at $\epsilon=0$, and 
\begin{multline}
\partial_\epsilon^2|_{\epsilon=0} \|v_\epsilon\|_6^6=  18\iint |G_1|^4|\varphi_1|^2 dxdt+12\Re\iint |G_1|^2\overline{G_1}^2\varphi_1^2 dxdt\label{w2here}\\
+3\pi^{3/2}\lambda^{-1/2}{\mathbf C_F}[\lambda]^6-8a\pi^{3/2}\lambda^{-7/2}{\mathbf C_F}[\lambda]^6.
\end{multline}
\end{proposition}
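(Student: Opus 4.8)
The plan is to justify the formal differentiation leading to \eqref{w} by differentiating twice under the integral sign, the only delicate point being the construction of a single integrable dominating function; the explicit value \eqref{w2here} then drops out of \eqref{w} together with \eqref{ReI=0} and the evaluations \eqref{claim1} and \eqref{claim2} proved in Appendix 2. As a first step I would dispose of the $\epsilon$-dependent cut-off $\eta_I\big(\tfrac{1}{\log(1/\epsilon)}u\big)$: since $(G_0+\epsilon\varphi)(u)=O_N(\epsilon^N)$ for every $N$ when $|u|\gtrsim\log\tfrac1\epsilon$, the argument used for the term $g^\flat_\epsilon$ in the proof of Lemma \ref{propest} (truncate, then apply Tomas--Stein to the truncated piece) yields $v_\epsilon=\widetilde v_\epsilon+E_\epsilon$ with $\|E_\epsilon\|_{L^6(\R^2)}=O_N(\epsilon^N)$ and
$$\widetilde v_\epsilon(x,t)=\int_\R(1+\epsilon c_\lambda u)\Big(1+\tfrac{\lambda^2}{2}\epsilon^2u^2\Big)e^{-\lambda u^2/2}\,e^{-ixu}\,e^{-it(\lambda u^2/2+\mu_\epsilon(u))}\,du,$$
where $\mu_\epsilon(u):=a\epsilon^2u^4+\epsilon^{-2}\widetilde\psi(\epsilon u)$ and $\widetilde\psi$ is a fixed compactly supported smooth function agreeing with $\psi$ near the origin. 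Since $\widetilde\psi$ vanishes to order $5$ at $0$ we may factor $\widetilde\psi(y)=y^5\widetilde\rho(y)$ with $\widetilde\rho$ smooth, so that $\mu_\epsilon(u)=a\epsilon^2u^4+\epsilon^3u^5\widetilde\rho(\epsilon u)$; in particular $(\epsilon,u)\mapsto\mu_\epsilon(u)$ is $C^\infty$ on a neighborhood of $\{0\}\times\R$ and $|\partial_u^j\mu_\epsilon(u)|\lesssim_j\epsilon^2(1+|u|)^{5}$ for small $\epsilon$. As $\|v_\epsilon\|_6,\|\widetilde v_\epsilon\|_6\lesssim 1$ and $\|E_\epsilon\|_6=O(\epsilon^3)$, one gets $\big|\,\|v_\epsilon\|_6^6-\|\widetilde v_\epsilon\|_6^6\,\big|=O(\epsilon^3)$, so it is enough to prove that $\epsilon\mapsto\|\widetilde v_\epsilon\|_6^6$ is twice differentiable at $0$ with second derivative equal to the right-hand side of \eqref{w2here}.

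\emph{Differentiation under the $du$-integral.} For fixed $(x,t)$ the integrand of $\widetilde v_\epsilon$ and its first two $\epsilon$-derivatives are dominated, uniformly for $\epsilon$ near $0$ and $x\in\R$, by $C(1+|t|)^{2}(1+|u|)^{C}e^{-\lambda u^2/4}$, which is integrable in $u$; hence $\epsilon\mapsto\widetilde v_\epsilon(x,t)$ is $C^2$, and differentiating under the integral reproduces \eqref{p1}--\eqref{p3}, namely $\widetilde v_0=G_1$, $\partial_\epsilon|_{\epsilon=0}\widetilde v_\epsilon=\varphi_1$ and $\partial_\epsilon^2|_{\epsilon=0}\widetilde v_\epsilon=\lambda^2G_2-2ita\,G_3$ (the correction $\mu_\epsilon$ carries a factor $\epsilon^2$, so only its coefficient $-ita\,u^4$ survives at second order, while the $\epsilon^3u^5\widetilde\rho$ piece contributes nothing). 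Consequently, for every $(x,t)$ the map $\epsilon\mapsto|\widetilde v_\epsilon(x,t)|^6$ is $C^2$ near $0$ with $\partial_\epsilon^2|_{\epsilon=0}|\widetilde v_\epsilon|^6$ equal to the pointwise expression displayed immediately above \eqref{w}.

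\emph{The dominating function and the conclusion.} It remains to exhibit a fixed function $W(x,t)$, independent of $\epsilon$, with $\max\{|\widetilde v_\epsilon|,|\partial_\epsilon\widetilde v_\epsilon|,|\partial_\epsilon^2\widetilde v_\epsilon|\}\le W$ on $\R^2$ uniformly for $\epsilon$ near $0$, and with $W^6\in L^1(\R^2)$. Granting this, $|\partial_\epsilon|\widetilde v_\epsilon|^6|\lesssim W^6$ and $|\partial_\epsilon^2|\widetilde v_\epsilon|^6|\lesssim W^6$ uniformly, so two successive applications of the theorem on differentiation under the integral sign give that $\epsilon\mapsto\|\widetilde v_\epsilon\|_6^6$ is twice differentiable near $0$ and $\partial_\epsilon^2|_{\epsilon=0}\|\widetilde v_\epsilon\|_6^6=\iint\partial_\epsilon^2|_{\epsilon=0}|\widetilde v_\epsilon|^6\,dx\,dt$. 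Integrating the pointwise formula, using \eqref{ReI=0} to kill the first-order cross term and re-grouping yields exactly twice the right-hand side of \eqref{w}; inserting the explicit evaluations \eqref{claim1} and \eqref{claim2} of its last two integrals then produces \eqref{w2here}. A natural choice is $W(x,t)=C(1+t^2)^{-1/4}P\big(|x|(1+t^2)^{-1/2}\big)e^{-x^2/(4\lambda(1+t^2))}$ with $P$ a suitable polynomial; the substitution $x=(1+t^2)^{1/2}\xi$ gives $\iint W^6\lesssim\int(1+t^2)^{-1}\,dt<\infty$, and this is exactly where the exponent $6>4$ is used.

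\emph{The main obstacle.} This is the bound by $W$. Each of $\widetilde v_\epsilon,\partial_\epsilon\widetilde v_\epsilon,\partial_\epsilon^2\widetilde v_\epsilon$ is a finite linear combination of oscillatory integrals $\int_\R u^k t^j q(\epsilon u)e^{-\lambda u^2/2}e^{-ixu}e^{-it(\lambda u^2/2+\mu_\epsilon(u))}\,du$ with $0\le j\le 2$, $k$ bounded and $q$ smooth with bounded derivatives, and one must extract the factor $(1+t^2)^{-1/4}$ together with Gaussian decay in $x$ with constants independent of $\epsilon$. This is legitimate because on the effective support of the Gaussian amplitude $|\partial_u^2\mu_\epsilon(u)|\lesssim\epsilon^2(1+|u|)^3=o(1)$, so the phase $u\mapsto\lambda u^2/2+\mu_\epsilon(u)$ stays uniformly strictly convex there; a complex-Gaussian (steepest-descent) estimate near the critical point $u_\ast\approx-x/(\lambda t)$, a non-stationary phase bound away from it, and the super-exponential smallness of $e^{-\lambda u^2/2}$ for $|u|\gtrsim\epsilon^{-1/3}$ combine to give the bound with $\epsilon$-uniform constants. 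Everything else is routine bookkeeping. Equivalently, one may phrase the whole argument as the statement that $\epsilon\mapsto\widetilde v_\epsilon$ is a $C^2$ map into $L^6(\R^2)$ composed with the $C^2$ functional $w\mapsto\|w\|_6^6$; the same uniform oscillatory-integral estimate is what is required.
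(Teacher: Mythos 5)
There is a genuine gap, and it is structural rather than a matter of missing details. You propose to dominate $\max\{|\widetilde v_\epsilon|,|\partial_\epsilon\widetilde v_\epsilon|,|\partial_\epsilon^2\widetilde v_\epsilon|\}$ uniformly in $\epsilon$ by a fixed function $W$ with $W^6\in L^1(\R^2)$. No such $W$ can exist. Indeed, with your (cutoff-free) $\widetilde v_\epsilon$, differentiating under the $du$-integral gives $\partial_\epsilon^2|_{\epsilon=0}\widetilde v_\epsilon=\lambda^2 G_2-2itaG_3$, and $tG_3\notin L^6(\R^2)$: the term $\lambda^{-4}tx^4(1+it)^{-4}G_1$ has modulus $\asymp |t|\,\xi^4(1+t^2)^{-1/4}e^{-\xi^2/(2\lambda)}$ in the coordinate $\xi=x/(1+t^2)^{1/2}$, and $\iint|t|^6\xi^{24}(1+t^2)^{-3/2}e^{-3\xi^2/\lambda}\,dx\,dt\asymp\int |t|^6(1+t^2)^{-1}dt=\infty$ (recall $a\geq(\lambda/2)^3>0$ by \eqref{a}, so this term is genuinely present). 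By Fatou, a uniform $\epsilon$-independent majorant $W$ with $W^6\in L^1$ would force $\partial_\epsilon^2|_{\epsilon=0}\widetilde v_\epsilon\in L^6$, a contradiction. For the same reason your closing reformulation — that $\epsilon\mapsto\widetilde v_\epsilon$ is a $C^2$ map into $L^6(\R^2)$ — is also false. The source of the difficulty is that each $\epsilon$-derivative of $e^{-it\mu_\epsilon(u)}$ brings down a factor of $t$, and stationary phase does not recover the lost $t$-decay.

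What is actually true, and what the paper proves, is that $\iint\partial_\epsilon^2|v_\epsilon|^6\,dx\,dt$ converges because of cancellation present in the real part $\Re\{it\,|G_1|^4\overline{G_1}G_3\}$; any argument that begins by taking absolute values of $|v_\epsilon|^5|\partial_\epsilon^2 v_\epsilon|$ destroys this cancellation and the integral diverges. The paper sidesteps this through the $(x,t)$-dependent change of variables $v=\Phi_\epsilon(u)$ defined in \eqref{defphi}: a Morse-lemma normalization which makes the phase exactly $tv^2/2$ (hence $\epsilon$-independent) and transfers all $\epsilon$-dependence into the amplitude $b_\epsilon\eta_k$. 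After this, $\epsilon$-derivatives act only on the amplitude, so $\partial_\epsilon^j\widetilde v_{\epsilon,k}$ inherits the same $\langle t\rangle^{-1/2}$ decay as $\widetilde v_{\epsilon,k}$ itself (Propositions \ref{mainestimates} and \ref{mainestimates2}), and since $|v_\epsilon|=|\widetilde v_\epsilon|$ one controls $\partial_\epsilon^2|v_\epsilon|^6=\partial_\epsilon^2|\widetilde v_\epsilon|^6$ through these normalized derivatives. Your argument lacks any such phase normalization, and without it the uniform dominating function simply does not exist. A secondary, lesser issue: the proposed ``complex-Gaussian steepest descent'' estimate would require analyticity of $\mu_\epsilon$ to shift contours, but $\widetilde\psi$ is only smooth, so one would have to work with a real-variable stationary phase argument — which is in any case what the paper's Plancherel representation of $\widetilde v_{\epsilon,k}$ provides.
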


\noindent It is enough to show that 
\begin{equation}\label{intd2v}
\iint_{\R^2} \Big|\partial_\epsilon |v_\epsilon (x,t)|^6\Big| dxdt\leq C\;\textrm{ and }\;\iint_{\R^2} \Big|\partial_\epsilon^2 |v_\epsilon (x,t)|^6\Big| dxdt\leq C
\end{equation}
for a finite constant $C$ (independent of $\epsilon$) and every $0<\epsilon\leq\epsilon_0$.
 The  existence of $\partial_\epsilon^2|_{\epsilon=0}\| v_\epsilon\|_6^6$ will then follow from standard tools of analysis, and the formal computations from \S \ref{sec:The hard term} show that its value is given by formula \eqref{w2here}.

For $\epsilon<1$, set, as in the proof of Proposition \ref{propest},
$$G_\epsilon(u)=\Big(1+\frac{\lambda^2}{2} \epsilon^2u^2\Big)(G_0+\epsilon\varphi)(u).$$ 
We have the following expression for the oscillatory integral:
\begin{align*}
v_\epsilon(x,t)=
&\int_\R e^{-it\phi_\epsilon(u)}G_\epsilon(u)\eta_I\Big(\frac{1}{\log\frac{1}{\epsilon}} u\Big)du,
\end{align*}
where

\begin{equation}\label{expressionforphase}
\phi_\epsilon(u):=\frac{x}{t}u+\frac{\lambda u^2}{2}+a\epsilon^2 u^4+\epsilon^{-2}\psi(\epsilon u).
\end{equation}

In order to present the main estimates for $v_\epsilon$, it is convenient to perform a  decomposition of the $(x,t)$-plane which we now describe. Let ${\eta_0},{\eta}\in C_0^\infty(\R)$  be  even and smooth cut-off functions supported in $[-1,1]$ and $[-2,-1/2]\cup[1/2,2]$ respectively, with the  properties that $0\leq{{\eta_0}}\leq 1$, $0\leq{\eta}\leq 1$, and 
\begin{equation}\label{partition}
\eta_I\Big(\frac{1}{\log\frac{1}{\epsilon}} u\Big)={\eta_0}(u)+\sum_{k=1}^{K(\epsilon)} {\eta}(2^{-k+1}u)\;\;\;\textrm{ for every }u\in \R.
\end{equation}
This can be accomplished with $2^{K(\epsilon)}\asymp\log\frac{1}{\epsilon}$. We obtain, in particular, a smooth partition of unity in the interval $\frac{1}{2}\log\frac{1}{\epsilon}\cdot I$ subordinate to the dyadic regions $\mathcal{D}_k:=\{u\in\R: 2^{k-2}\leq|u|\leq 2^{k}\}$. 
This allows us to express $v_\epsilon$ as a sum of $\asymp K(\epsilon)$ integrals:

\begin{equation}\label{toomanyveps}
v_\epsilon(x,t)=v_{\epsilon,0}(x,t)+\sum_{k=1}^{K(\epsilon)} v_{\epsilon,k}(x,t),
\end{equation}
where
$$v_{\epsilon,0}(x,t):=\int_\R e^{-it\phi_\epsilon(u)}G_\epsilon(u){\eta_0}(u)du$$
and
\begin{equation}\label{vek}
v_{\epsilon,k}(x,t):=\int_\R e^{-it\phi_\epsilon(u)}G_\epsilon(u){\eta}(2^{-k+1} u)du
\end{equation}
\noindent for $k\in\{1,2,\ldots, K(\epsilon)\}$. Define:

\begin{align*}
&\mathfrak{C}_0:=\Big\{(x,t)\in\R^2: \Big|\frac{x}{t}\Big|\leq \lambda \Big\};\\
&\mathfrak{C}_k:=\Big\{(x,t)\in\R^2: \lambda 2^{k-2}\leq\Big|\frac{x}{t}\Big|\leq \lambda 2^{k}\Big\};\;\;(k\in\{1,2,\ldots, K(\epsilon)\})\\
&\mathfrak{C}_\infty:=\Big\{(x,t)\in\R^2: \Big|\frac{x}{t}\Big|\geq \lambda \log\frac{1}{\epsilon} \Big\}.
\end{align*}

\noindent This yields a decomposition of the $(x,t)$-plane as a union of cones
\begin{equation}\label{decR2}
 \R^2=\mathfrak{C}_0\cup\bigcup_{k=1}^{K(\epsilon)} \mathfrak{C}_k\cup \mathfrak{C}_\infty.
 \end{equation}
 which parallels \eqref{partition}.
 For $k\in\{2,3,\ldots, K(\epsilon)-2\}$, define the ``enlarged'' cones 
 
$$\mathfrak{C}_k^*:=\mathfrak{C}_k\cup \mathfrak{C}_{k\pm 1}\cup \mathfrak{C}_{k\pm 2}.$$
Additionally, let

\begin{align*}
&\mathfrak{C}_0^*:=\mathfrak{C}_0\cup \mathfrak{C}_1\cup \mathfrak{C}_2;\\
&\mathfrak{C}_1^*:=\mathfrak{C}_0\cup \mathfrak{C}_1\cup \mathfrak{C}_2\cup \mathfrak{C}_3;\\
&\mathfrak{C}_{K(\epsilon)-1}^*:=\mathfrak{C}_{K(\epsilon)-3}\cup \mathfrak{C}_{K(\epsilon)-2}\cup \mathfrak{C}_{K(\epsilon)-1}\cup \mathfrak{C}_{K(\epsilon)}\cup \mathfrak{C}_\infty;\\
&\mathfrak{C}_{K(\epsilon)}^*:=\mathfrak{C}_{K(\epsilon)-2}\cup \mathfrak{C}_{K(\epsilon)-1}\cup \mathfrak{C}_{K(\epsilon)}\cup \mathfrak{C}_\infty;\\
\end{align*}

The estimates in the following proposition are an expression of the {\em stationary phase principle}, which roughly states that the main contribution for an oscillatory expression like \eqref{vek} comes from the information concentrated on neighborhoods of the stationary points of its phase function.

\begin{proposition}\label{mainestimates}
For $k\in\{1,\ldots, K(\epsilon)\}$ and for every sufficiently small $\epsilon>0$, there exist $a_k\geq 0$ such that
\begin{displaymath}
|v_{\epsilon,k}(x,t)| \lesssim a_k\cdot \left\{ \begin{array}{ll}
\langle t \rangle^{-1/2} & \textrm{if $(x,t)\in \mathfrak{C}_k^*$},\\
\langle t \rangle^{-1} & \textrm{if $(x,t)\in \mathfrak{C}_0$},\\
\langle x \rangle^{-1} & \textrm{if $(x,t)\in \Big(\bigcup_{|j-k|>2}\mathfrak{C}_j\Big)\cup \mathfrak{C}_\infty$}.\\
\end{array} \right.
\end{displaymath}
and $\sum_{k} 2^ka_k<\infty$. If $k=0$, then
\begin{displaymath}
|v_{\epsilon,0}(x,t)| \lesssim  \left\{ \begin{array}{ll}
\langle t \rangle^{-1/2} & \textrm{if $(x,t)\in \mathfrak{C}_0^*$},\\
\langle x \rangle^{-1} & \textrm{otherwise}.\\
\end{array} \right.
\end{displaymath}
\end{proposition}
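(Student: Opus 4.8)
The plan is to derive Proposition~\ref{mainestimates} from van der Corput's lemma in the stationary régime and from (one) integration by parts in the non-stationary régime, the only real difficulty being that all implicit constants must be uniform in $\epsilon$. First I would differentiate \eqref{expressionforphase}:
\[
\phi_\epsilon'(u)=\frac{x}{t}+\lambda u+4a\epsilon^2u^3+\epsilon^{-1}\psi'(\epsilon u),\qquad
\phi_\epsilon''(u)=\lambda+12a\epsilon^2u^2+\psi''(\epsilon u).
\]
Since $\psi(y)=O(|y|^5)$, one has $\psi'(y)=O(|y|^4)$ and $\psi''(y)=O(|y|^3)$, and on the support of $\eta_I(\tfrac{1}{\log(1/\epsilon)}\cdot)$ one has $|u|\lesssim\log\tfrac1\epsilon$ because $2^{K(\epsilon)}\asymp\log\tfrac1\epsilon$; hence the perturbative terms in $\phi_\epsilon'$ and $\phi_\epsilon''$ are $\lesssim\epsilon^2(\log\tfrac1\epsilon)^3$ and $\lesssim\epsilon^2(\log\tfrac1\epsilon)^2$, i.e. $o(1)$. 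Consequently, for all small $\epsilon$, $\phi_\epsilon''(u)\asymp\lambda$ uniformly on the support, so $\phi_\epsilon$ has there at most one critical point $u_*$, with $|\phi_\epsilon'(u)|\asymp\lambda|u-u_*|$, and when $u_*$ lies in the support, $|x/t|\asymp\lambda|u_*|$.

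Next I would introduce the amplitude $\chi_{\epsilon,k}(u):=G_\epsilon(u)\eta(2^{-k+1}u)$, supported where $|u|\asymp2^k$, and use the explicit forms of $G_0,\varphi$ together with $\epsilon 2^k\lesssim\epsilon\log\tfrac1\epsilon\to0$ to check that $\|\chi_{\epsilon,k}\|_\infty$, $\|\chi_{\epsilon,k}\|_{L^1}$, $\|\chi_{\epsilon,k}'\|_{L^1}$ (and any fixed finite list of such norms) are all $\lesssim P(2^k)e^{-c\lambda 4^k}$, uniformly in $\epsilon$, for a fixed polynomial $P$ and $c>0$. Taking $a_k$ to be a fixed multiple of the maximum of this list then gives $\sum_{k\ge1}2^ka_k<\infty$ uniformly. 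On $\mathfrak{C}_k^*$ one has $|x/t|\asymp\lambda2^k$, so $u_*\approx-x/(\lambda t)$ with $|u_*|\asymp2^k$ may fall on the support of $\chi_{\epsilon,k}$; van der Corput's lemma, with the non-degeneracy $|\phi_\epsilon''|\asymp\lambda$ from the phase analysis, gives $|v_{\epsilon,k}|\lesssim(|t|\lambda)^{-1/2}(\|\chi_{\epsilon,k}\|_\infty+\|\chi_{\epsilon,k}'\|_{L^1})\lesssim a_k|t|^{-1/2}$, and the trivial bound $|v_{\epsilon,k}|\le\|\chi_{\epsilon,k}\|_{L^1}\lesssim a_k$ for $|t|\lesssim1$ upgrades this to $a_k\langle t\rangle^{-1/2}$ on $\mathfrak{C}_k^*$. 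The same argument with the uniformly bounded amplitude $\chi_{\epsilon,0}=G_\epsilon\eta_0$ handles $v_{\epsilon,0}$ on $\mathfrak{C}_0^*$.

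For the non-stationary régimes I would note that, by the very definition of the enlarged cones, on $\mathfrak{C}_j$ with $|j-k|>2$, on $\mathfrak{C}_0$ with $k\ge3$, and on $\mathfrak{C}_\infty$ with $k\le K(\epsilon)-2$, the quantity $-x/t$ is well separated from $\lambda u\asymp\lambda2^k$, so the phase analysis yields $|\phi_\epsilon'(u)|\gtrsim\max(\lambda2^k,|x/t|)\gtrsim\max(1,|x/t|)$ on the support of $\chi_{\epsilon,k}$ (for $k\le2$ one has $\mathfrak{C}_0\subseteq\mathfrak{C}_k^*$, and for $k\ge K(\epsilon)-1$ one has $\mathfrak{C}_\infty\subseteq\mathfrak{C}_k^*$, so those cases are already covered by the preceding paragraph). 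One integration by parts in $u$, using $|\phi_\epsilon''|\lesssim\lambda$, then gives $|v_{\epsilon,k}|\lesssim(|t|\inf|\phi_\epsilon'|)^{-1}(\|\chi_{\epsilon,k}'\|_{L^1}+\|\chi_{\epsilon,k}\|_{L^1})\lesssim a_k/|x|$, and in addition $\lesssim a_k/(|t|\lambda2^k)\lesssim a_k/|t|$ on $\mathfrak{C}_0$; combining with the trivial bound $\lesssim a_k$ when $|x|\lesssim1$ (resp. $|t|\lesssim1$) produces the asserted $\langle x\rangle^{-1}$ (resp. $\langle t\rangle^{-1}$) decay, and likewise for $v_{\epsilon,0}$.

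I expect the genuine obstacle to be exactly the uniformity bookkeeping just invoked: one must verify that the $\epsilon$-dependent perturbation $4a\epsilon^2u^3+\epsilon^{-1}\psi'(\epsilon u)$ never destroys the non-degeneracy of $\phi_\epsilon$ nor creates spurious critical points on the \emph{growing} support $|u|\lesssim\log\tfrac1\epsilon$, and that the weights $a_k$ can be chosen with $\sum_k2^ka_k<\infty$ independently of $\epsilon$ — both of which rely on the localization built into \eqref{family} that forces $\epsilon^2(\log\tfrac1\epsilon)^3\to0$, which is precisely why that localization was introduced in the first place.
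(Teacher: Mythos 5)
Your proposal is correct, and for the stationary régime it takes a genuinely different route than the paper. Where you invoke van der Corput's lemma with the uniform bound $|\phi_\epsilon''|\asymp\lambda$ on the support to get $|v_{\epsilon,k}|\lesssim(|t|\lambda)^{-1/2}\|\chi_{\epsilon,k}'\|_{L^1}$ directly, the paper instead translates $u\mapsto u+u_0$, performs the Morse-lemma change of variables $v=\Phi_\epsilon(u)$ with $v^2/2=\phi_\epsilon(u+u_0)-\phi_\epsilon(u_0)$, rewrites $\widetilde{v}_{\epsilon,k}$ as $\int e^{-itv^2/2}b_\epsilon(v)\eta_k(v)\,dv$, and extracts the $|t|^{-1/2}$ factor by Plancherel, taking $a_k:=\|\widehat{b_\epsilon\eta_k}\|_{L^1}$ and controlling it by Cauchy--Schwarz via $\|b_\epsilon\eta_k\|_{L^2}$ and $\|(b_\epsilon\eta_k)'\|_{L^2}$. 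Both yield $a_k\lesssim P(2^k)e^{-c\lambda 4^k}$, hence $\sum_k 2^k a_k<\infty$, and both use the same one-integration-by-parts argument in the non-stationary régimes, so as a proof of Proposition~\ref{mainestimates} alone your version is simpler and buys a cleaner argument. The cost of dispensing with the change of variables shows up later: the paper's whole point in setting up $\Phi_\epsilon,\Psi_\epsilon$ and the uniform bounds on $\partial_\epsilon^j(d^m\Psi_\epsilon/dv^m)$ is that after the substitution the phase $v^2/2$ is \emph{$\epsilon$-independent}, so $\partial_\epsilon\widetilde{v}_{\epsilon,k}$ and $\partial_\epsilon^2\widetilde{v}_{\epsilon,k}$ (needed for Proposition~\ref{mainestimates2} and hence for Proposition~\ref{d2epsilon}) differentiate only the amplitude $b_\epsilon\eta_k$. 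A naive van der Corput approach applied to $\partial_\epsilon^2 v_{\epsilon,k}$ in the original variable would bring down factors $-it\partial_\epsilon\phi_\epsilon$ from the phase, producing positive powers of $|t|$ in the amplitude that ruin the $\langle t\rangle^{-1/2}$ decay you need to integrate $|v_\epsilon|^4|\partial_\epsilon v_\epsilon|^2$ etc.\ over $\R^2$; one would then have to find another device (e.g.\ rewriting those $t$-factors as $\partial_t$'s of the phase and integrating by parts in $t$), whereas the paper's change of variables handles this once and for all. So your proof is valid as written for Proposition~\ref{mainestimates}, but it is worth understanding why the paper pays the overhead of the Morse-lemma reduction.
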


\begin{proof}

To make the notation less cumbersome, we will limit our discussion to the case when $k\in\{3,4,\ldots, K(\epsilon)-2\}$. The other cases follow in a similar way. 
\vspace{.5cm}

{\textbf{Case 1.} ${(x,t)\in \mathfrak{C}_k^*}$}

Without loss of generality we may assume that $(x,t)\in \mathfrak{C}_k^*$ is such that the phase $\phi_\epsilon$ has a critical point in the support of the cut-off function $\eta_I\Big(\frac{1}{\log\frac{1}{\epsilon}} \cdot\Big)$, for otherwise we could integrate \eqref{vek} by parts and obtain a better decay in $t$. This critical point is necessarily unique. In other words, there exists a unique $u_0\in 2\log\frac{1}{\epsilon}\cdot I$ such that
\begin{equation}\label{s_0}
\frac{d\phi_\epsilon}{du}(u_0)=\frac{x}{t}+\lambda u_0+4a\epsilon^2 u_0^3+\epsilon^{-1}\psi'(\epsilon u_0)=0.
\end{equation}
In general one cannot hope to solve this equation explicitly for $u_0$. However, since $|u_0|\lesssim \log\frac{1}{\epsilon}$, we have that 
$$u_0=-\frac{x}{\lambda t}+O_{a,\psi}(\epsilon).$$
In particular, since $(x,t)\in \mathfrak{C}_k^*$, we have that $|u_0|\asymp 2^k$.

Translating $u\mapsto u+u_0$ and defining $\widetilde{\phi}_\epsilon(u):=\phi_\epsilon(u+u_0)-\phi_\epsilon(u_0)$, we have that
\begin{equation*}
v_\epsilon(x,t)=e^{-it{\phi_\epsilon}(u_0)}\int_\R e^{-it\widetilde{\phi}_\epsilon(u)}G_\epsilon(u+u_0)\eta_I \Big(\frac{1}{\log\frac{1}{\epsilon}} (u+u_0)\Big)du.
\end{equation*}

\noindent It will suffice to get good estimates on  
\begin{equation}\label{yetanotherv}
\widetilde{v_\epsilon}(x,t):= e^{it{\phi_\epsilon}(u_0)} v_\epsilon(x,t)=\int_\R e^{-it\widetilde{\phi}_\epsilon(u)}G_\epsilon(u+u_0)\eta_I\Big(\frac{1}{\log\frac{1}{\epsilon}} (u+u_0)\Big)du.
\end{equation}

\noindent The new phase function $\widetilde{\phi_\epsilon}$ satisfies 
$$\widetilde{\phi_\epsilon}(0)=0=\frac{d\widetilde{\phi_\epsilon}}{du}(0)\;\textrm{ and }\;\frac{d^2\widetilde{\phi_\epsilon}}{du^2}(0)=\lambda+O(\epsilon).$$
In particular, the origin is its unique nondegenerate critical point.
This property is shared by the quadratic function $v\mapsto \frac{v^2}{2}$. Inspired by the proof\footnote{See, for instance, \cite[pp. 334$-$337]{S}.} of the usual method of stationary phase, we will change variables once again.

 Recalling definition \eqref{expressionforphase} and identity \eqref{s_0}, and using Taylor's formula, we have that

\begin{align*}
\frac{v^2}{2}:=&\widetilde{\phi_\epsilon}(u)=\phi_\epsilon(u+u_0)-\phi_\epsilon(u_0)\\
=&\frac{u^2}{2}\Big((\lambda+12 au_0^2\epsilon^2)+8au_0\epsilon^2 u+2a\epsilon^2 u^2\Big)+\epsilon^{-2}\psi(\epsilon(u+u_0))-\epsilon^{-2}\psi(\epsilon u_0)-u\epsilon^{-1}\psi'(\epsilon u_0)\\
=&\frac{u^2}{2}\Big((\lambda+12 au_0^2\epsilon^2)+8au_0\epsilon^2 u+2a\epsilon^2 u^2+\psi''(\epsilon(u_0+\theta u))\Big)
\end{align*}

\noindent for some $\theta\in (0,1)$. Taking square roots,
\begin{equation}\label{defphi}
v=u\Big((\lambda+12 au_0^2\epsilon^2)+8au_0\epsilon^2 u+2a\epsilon^2 u^2+\psi''(\epsilon(u_0+\theta u))\Big)^{1/2}=:\Phi_\epsilon(u).
\end{equation}
Take $\epsilon>0$ sufficiently small. Then $\Phi_\epsilon$ is a $C^\infty$ diffeomorphism from $2\log\frac{1}{\epsilon}\cdot I$ onto its image, whose inverse we denote by $\Psi_\epsilon:=\Phi_\epsilon^{-1}$. 
One can verify directly that $\frac{d\Phi_\epsilon}{du}(u)>0$ for every $u\in2\log\frac{1}{\epsilon}\cdot I$.
As a consequence,
\begin{equation}\label{onemorev}
\widetilde{v_\epsilon}(x,t)=\int_\R e^{-it\frac{v^2}{2}}G_\epsilon(\Psi_\epsilon(v)+u_0)\eta_I\Big(\frac{1}{\log\frac{1}{\epsilon}} (\Psi_\epsilon(v)+u_0)\Big)\frac{d\Psi_\epsilon}{dv}(v)dv.
\end{equation}

\noindent Rewriting \eqref{defphi} as 
\begin{align}
\Phi_\epsilon (u)
=&u\Big(\lambda+2a(2u_0^2+(2u_0+u)^2)\epsilon^2+\psi''(\epsilon(u_0+\theta u))\Big)^{1/2}\label{phiepsilon}
\end{align}
and recalling that $\lambda,a>0$ and $\psi''(\epsilon(u_0+\theta u))=O(\epsilon^3)$, we see that $\Phi_\epsilon$ is twice differentiable as a function of $\epsilon$ for sufficiently small $\epsilon$. The same holds for $\frac{d\Phi_\epsilon}{du}$, and using the chain rule one can draw similar conclusions about $\Psi_\epsilon$ and $\frac{d\Psi_\epsilon}{dv}$. 
\vspace{.2cm}

{\em
 In what follows, $C$ will be a finite non-zero constant that may change from line to line and depend on the parameters $\lambda, a$, the function $\psi$ and the interval $I$ but will always be independent of $\epsilon, x$ and $t$. This uniformity is crucial in our analysis.}
\vspace{.2cm}

We proceed to prove some uniform (in $\epsilon$) bounds for $\frac{d\Psi_\epsilon}{dv}$ and $\frac{d^2\Psi_\epsilon}{dv^2}$.
Start by observing that, for every sufficiently small $\epsilon>0$,
$$C^{-1}|u|\leq |\Phi_\epsilon(u)|\leq C|u|,\;\;\;\forall u\in2\log\frac{1}{\epsilon}\cdot I.$$

\noindent Since $\Phi_\epsilon\circ\Psi_\epsilon=id$ on $\Phi_\epsilon(2\log\frac{1}{\epsilon}\cdot I)\subseteq C\log\frac{1}{\epsilon}\cdot I$, we have the same uniform bounds for $\Psi_\epsilon$:
\begin{equation}\label{unif1}
C^{-1}|v|\leq |\Psi_\epsilon(v)|\leq C|v|,\;\;\;\forall v\in C\log\frac{1}{\epsilon}\cdot I.
\end{equation}

\noindent We also have the uniform bounds
\begin{equation}\label{unif2}
C^{-1}\leq \Big|\frac{d\Phi_\epsilon}{du}(u)\Big|\leq C,\;\;\;\forall u\in2\log\frac{1}{\epsilon}\cdot I.
\end{equation}

\noindent By the inverse function theorem,
\begin{equation}\label{dinv}
\frac{d\Psi_\epsilon}{dv}(v)=\frac{1}{\frac{d\Phi_\epsilon}{du}(\Psi_\epsilon(v))},
\end{equation}
and so \eqref{unif2} implies
\begin{equation}\label{unif3}
C^{-1}\leq \Big|\frac{d\Psi_\epsilon}{dv}(v)\Big|\leq C,\;\;\;\forall v\in C\log\frac{1}{\epsilon}\cdot I.
\end{equation}
Only the upper bound will be useful to us.
In a similar way one can conclude that 
\begin{equation}\label{unif7}
\Big|\frac{d^2\Psi_\epsilon}{dv^2}(v)\Big|\leq C,\;\;\;\forall v\in C\log\frac{1}{\epsilon}\cdot I.
\end{equation}

 We also need to estimate the Gaussian term $G_\epsilon$  appearing in  \eqref{onemorev} and some of its derivatives. The following claim, whose proof is straightforward and therefore omitted, provides good enough bounds:
\begin{claim}\label{uniformestimates2}
The following uniform estimates hold for every sufficiently small $\epsilon>0$, for every nonnegative integer $n$ and for every $u\in 2\log\frac{1}{\epsilon}\cdot I$: 

$$\Big|\frac{d^n}{d u^n}G_\epsilon(u)\Big|\lesssim_n \langle u\rangle^ne^{-\frac{\lambda u^2}{2}}.$$
\end{claim}
\vspace{.5cm}

Let us go back to \eqref{onemorev}. Introducing the cut-off functions $\eta$ and ${\eta_0}$ as before, we can expand
\begin{equation}\label{dectildev}
\widetilde{v}_\epsilon(x,t)=\widetilde{v}_{\epsilon,0}(x,t)+\sum_{k=1}^{K(\epsilon)} \widetilde{v}_{\epsilon,k}(x,t),
\end{equation}
where
$$\widetilde{v}_{\epsilon,0}(x,t):=\int_\R e^{-it\frac{v^2}{2}}G_\epsilon(\Psi_\epsilon(v)+u_0)\frac{d\Psi_\epsilon}{dv}(v){\eta_0}(\Psi_\epsilon(v)+u_0)dv$$
and
$$\widetilde{v}_{\epsilon,k}(x,t):=\int_\R e^{-it\frac{v^2}{2}}G_\epsilon(\Psi_\epsilon(v)+u_0)\frac{d\Psi_\epsilon}{dv}(v){\eta}(2^{-k+1} (\Psi_\epsilon(v)+u_0))dv$$
for $k\in\{1,\ldots, K(\epsilon)\}$. As before, $|\widetilde{v}_{\epsilon,k}|=|{v}_{\epsilon,k}|$ pointwise. 

 Define:
\begin{equation}\label{defbeta}
b_\epsilon(v):=G_\epsilon(\Psi_\epsilon(v)+u_0)\frac{d\Psi_\epsilon}{dv}(v)
\;\;\;\textrm{ and } \;\;\;
{\eta}_k(v):={\eta}(2^{-k+1} (\Psi_\epsilon(v)+u_0)).
\end{equation}
Notice that ${\eta}_k\in C_0^\infty(\R)$ is supported on 
$$\mathcal{E}_k:=\{v\in  C\log\frac{1}{\epsilon}\cdot I: 2^{k-2}\leq|\Psi_\epsilon(v)+u_0|\leq 2^{k}\},$$ 
and that $b_\epsilon$ is a Schwartz function on $\mathcal{E}_k$.
This justifies the use of  Plancherel's Theorem:
\begin{equation*}
\widetilde{v}_{\epsilon,k}(x,t)=\int_\R e^{-it\frac{v^2}{2}}b_\epsilon(v){\eta}_k(v)dv= (-2\pi i)^{1/2}t^{-1/2}\int_\R e^{ i t^{-1} \frac{\xi^2}{2}}\widehat{b_\epsilon{\eta}_k}(\xi)d\xi.
\end{equation*}

 Set $a_k:=\|\widehat{b_\epsilon{\eta}_k}\|_{L^1}$. We will be done analyzing Case 1 once we verify that the sequence $\{a_k\}_{k}$ decays rapidly enough to force the series $\sum_{k} 2^ka_k$ to converge.

We start by estimating the $L^2$ norm of the function  $b_\epsilon{\eta}_k$. Changing back to the original variable $u=\Psi_\epsilon(v)$ and using H\"{o}lder's inequality together with estimate \eqref{unif3} and Claim \ref{uniformestimates2} for $n=0$, one gets that
\begin{align*}
\|b_\epsilon{\eta}_k\|_{L^2}^2=&\int \Big|G_\epsilon(\Psi_\epsilon(v)+u_0)\frac{d\Psi_\epsilon}{dv}(v){\eta}(2^{-k+1} (\Psi_\epsilon(v)+u_0))\Big|^2dv\\
\leq&\int_{|u+u_0|\asymp 2^k}\Big|G_\epsilon(u+u_0){\eta}(2^{-k+1} (u+u_0))\Big|^2\Big|\frac{d\Psi_\epsilon}{dv}(\Phi_\epsilon(u))\Big|du
\lesssim 2^k e^{-\lambda4^{k-1}}.
\end{align*}

An  analogous argument, using estimate \eqref{unif7} instead,
yields 
$$\Big\|\frac{d}{dv}(b_\epsilon{\eta}_k)\Big\|_{L^2}^2\lesssim 2^{3k} e^{-\lambda4^{k-1}}.$$ 

Using Cauchy-Schwarz and Plancherel, we see that  these two estimates are enough for our purposes:
\begin{align*}
a_k=\|\widehat{b_\epsilon{\eta}_k}\|_{L^1}=&\int_{|\xi|\leq 1}|\widehat{b_\epsilon{\eta}_k}(\xi)|d\xi+\int_{|\xi|\geq 1}\frac{1}{|\xi|}\Big(|\xi||\widehat{b_\epsilon{\eta}_k}(\xi)|\Big)d\xi\\
\lesssim& \|\widehat{b_\epsilon{\eta}_k}\|_{L^2}+\Big(\int_{|\xi|\geq 1}{|\xi|^{-2}}d\xi\Big)^{1/2}\Big(\int_{|\xi|\geq 1} |\xi|^2|\widehat{b_\epsilon{\eta}_k}(\xi)|^2d\xi\Big)^{1/2}\\
\lesssim& \|{b_\epsilon{\eta}_k}\|_{L^2}+\Big\|\frac{d}{du}(b_\epsilon{\eta}_k)\Big\|_{L^2}
\lesssim 2^{3k/2} e^{-\lambda2^{2k-3}}.
\end{align*}
This concludes the analysis of Case 1.
\vspace{.5cm}

{\textbf{Case 2.} ${(x,t)\in \mathfrak{C}_0}$}

 The crucial observation is that, since $(x,t)\in \mathfrak{C}_0$ and $k>2$, the phase $\phi_\epsilon$ has no critical points in the support of $\eta(2^{-k+1}\cdot)$ i.e. the dyadic region $\mathcal{D}_k=\{u\in\R: 2^{k-2}\leq|u|\leq 2^{k}\}$. Indeed, since $|\frac{x}{t}|\leq\lambda$, we have that
\begin{equation}\label{dphilowerbd}
\Big|\frac{d\phi_\epsilon}{du}(u)\Big|=\Big|\frac{x}{t}+\lambda u+ O(\epsilon)\Big|\geq\frac{1}{2}\Big( |\lambda u|-\Big|\frac{x}{t}\Big|\Big)\geq\frac{\lambda}{2}(2^{k-2}-1)\geq\frac{\lambda}{2}
\end{equation}
if $\epsilon>0$ is chosen sufficiently small.

Integrating \eqref{vek} by parts, we get 
\begin{align*}
|v_{\epsilon,k}(x,t)|=&\frac{1}{|t|}\Big|\int_{\mathcal{D}_k}  e^{-it\phi_\epsilon(u)}\frac{d}{du}\Big(\frac{G_\epsilon(u)\eta(2^{-k+1}u)}{\frac{d\phi_\epsilon}{du}(u)}\Big)du\Big|\\
\lesssim&\frac{1}{|t|}\int_{\mathcal{D}_k} \Big|\frac{\frac{d}{du}(G_\epsilon (u)\eta(2^{-k+1}u))}{\frac{d\phi_\epsilon}{du}(u)}\Big|+\Big|\frac{G_\epsilon (u)\eta(2^{-k+1}u)\frac{d^2\phi_\epsilon}{du^2}(u)}{(\frac{d\phi_\epsilon}{du}(u))^2}\Big|du.
\end{align*}
H\"{o}lder's inequality implies
$$|v_{\epsilon,k}(x,t)|\lesssim \frac{2^k}{|t|}\Big(\Big\|\frac{\frac{d}{du}(G_\epsilon \eta(2^{-k+1}\cdot))}{\frac{d\phi_\epsilon}{du}}\Big\|_{L^\infty(\mathcal{D}_k)}+\Big\|\frac{G_\epsilon \eta(2^{-k+1}\cdot)\frac{d^2\phi_\epsilon}{du^2}}{(\frac{d\phi_\epsilon}{du})^2}\Big\|_{L^\infty(\mathcal{D}_k)}\Big),$$
and the desired estimate\footnote{One could repeat this argument $N$ times and obtain a bound 
$|v_{\epsilon,k}(x,t)|\lesssim_{k,N} \langle t\rangle^{-N}$ for  $(x,t)\in \mathfrak{C}_0$,
but this extra knowledge would be of no significance  to our analysis.} now follows from \eqref{dphilowerbd}, Claim \ref{uniformestimates2} and the fact that $\frac{d^2\phi_\epsilon}{du^2}$ is uniformly bounded on $\mathcal{D}_k$.  
\vspace{.5cm}

{\textbf{Case 3.} ${(x,t)\in \mathfrak{C}_j}\textrm{ for some }j\textrm{ such that } |k-j|>2, \textrm{ or } {(x,t)\in \mathfrak{C}_\infty}$}
The proof is identical to that of Case 2 and is therefore omitted.
\end{proof}
\vspace{.2cm}

Let us go back to the proof of Proposition \ref{d2epsilon}. 
Observe that the  estimates from Proposition \ref{mainestimates} readily imply the following special case of the $L^2\rightarrow L^6$ adjoint restriction inequality:
\begin{equation}\label{intv}
\iint_{\R^2}  |v_\epsilon (x,t)|^6 dxdt\leq C.
\end{equation}

\noindent Indeed, using the expansion \eqref{toomanyveps}, we have that
$$\iint_{\R^2}  |v_\epsilon (x,t)|^6 dxdt=\sum_{k_1,\ldots,k_6}\iint_{\R^2}{v}_{\epsilon,k_1}\overline{{v}_{\epsilon,k_2}}{v}_{\epsilon,k_3}\overline{{v}_{\epsilon,k_4}}{v}_{\epsilon,k_5}\overline{{v}_{\epsilon,k_6}}dxdt$$
where, for each $j\in\{1,\ldots,6\}$, the sum is taken over $k_j\in\{0,1,\ldots,K(\epsilon)\}$. For a fixed $(k_1,\ldots,k_6)$ the corresponding integral can be written as a sum of $\asymp K(\epsilon)$ integrals over the regions given by decomposition \eqref{decR2},
and using the bounds given by Proposition \ref{mainestimates} on each of these regions one readily obtains \eqref{intv}.
Note that
$$\iint_{\mathfrak{C}_k} \langle t\rangle^{-3}dxdt\asymp 2^k,$$
and so it is crucial to know that $\sum_k 2^ka_k<\infty$.

To prove \eqref{intd2v}, it is enough to control the following integrals:

\begin{align*}
I_0(\epsilon):=&\iint_{\R^2} |v_\epsilon (x,t)|^5 |\partial_\epsilon v_\epsilon(x,t)|  dxdt;\\
I_1(\epsilon):=&\iint_{\R^2} |v_\epsilon (x,t)|^5 |\partial_\epsilon^2 v_\epsilon(x,t)|  dxdt;\\
I_2(\epsilon):=&\iint_{\R^2} |v_\epsilon (x,t)|^4 |\partial_\epsilon v_\epsilon(x,t)|^2  dxdt.
\end{align*}

\noindent The reasoning just described to prove \eqref{intv} can be used to establish bounds for the integrals $I_0(\epsilon)$, $I_1(\epsilon)$ and $I_2(\epsilon)$ which are uniform in $\epsilon$,
as long as we have an analogue of Proposition \ref{mainestimates} for first and second derivatives.
As before, bounds for $\partial_\epsilon \widetilde{v}_\epsilon$ and $\partial_\epsilon^2 \widetilde{v}_\epsilon$ will suffice.
Let us focus on the more involved case of second derivatives: 

\begin{proposition}\label{mainestimates2}
For $k\in\{1,\ldots, K(\epsilon)\}$ and for every sufficiently small $\epsilon>0$, there exist $b_k\geq 0$ such that
\begin{displaymath}
|\partial_\epsilon^2 \widetilde{v}_{\epsilon,k}(x,t)| \lesssim b_k\cdot \left\{ \begin{array}{ll}
\langle t \rangle^{-1/2} & \textrm{if $(x,t)\in \mathfrak{C}_k^*$},\\
\langle t \rangle^{-1} & \textrm{if $(x,t)\in \mathfrak{C}_0$},\\
\langle x \rangle^{-1} & \textrm{if $(x,t)\in \Big(\bigcup_{|j-k|>2}\mathfrak{C}_j\Big)\cup \mathfrak{C}_\infty$}.\\
\end{array} \right.
\end{displaymath}
and $\sum_{k} 2^kb_k<\infty$. If $k=0$, then
\begin{displaymath}
|\partial_\epsilon^2 \widetilde{v}_{\epsilon,0}(x,t)| \lesssim  \left\{ \begin{array}{ll}
\langle t \rangle^{-1/2} & \textrm{if $(x,t)\in \mathfrak{C}_0^*$},\\
\langle x \rangle^{-1} & \textrm{otherwise}.\\
\end{array} \right.
\end{displaymath}
\end{proposition}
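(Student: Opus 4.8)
The plan is to reproduce the proof of Proposition \ref{mainestimates} essentially verbatim, with every amplitude appearing there replaced by its second $\epsilon$-derivative. The feature that makes this possible is precisely the one that motivated passing from $v_\epsilon$ to $\widetilde v_\epsilon$: after the change of variables $u\mapsto v=\Phi_\epsilon(u)$ performed in Case 1, the phase of $\widetilde v_{\epsilon,k}$ is $v^2/2$, which does \emph{not} depend on $\epsilon$. Hence differentiating the identity \eqref{onemorev} under the integral sign produces no factor of $t$ whatsoever:
\begin{equation}\label{pe2vtilde}
\partial_\epsilon^2\widetilde v_{\epsilon,k}(x,t)=\int_\R e^{-itv^2/2}\,\partial_\epsilon^2\bigl(b_\epsilon{\eta}_k\bigr)(v)\,dv,
\end{equation}
with $b_\epsilon,{\eta}_k$ as in \eqref{defbeta}. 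The problem on $\mathfrak{C}_k^*$ is thereby reduced to bounding $\partial_\epsilon^2(b_\epsilon{\eta}_k)$ in $L^2$ and bounding the $L^2$ norm of its $v$-derivative — exactly the two quantities that controlled $a_k$ in Case 1.

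The first concrete step is to record uniform-in-$\epsilon$ bounds for the $\epsilon$-derivatives of the auxiliary objects. The stationary point $u_0=u_0(x,t,\epsilon)$ solves \eqref{s_0}; since $|u_0|\lesssim\log\tfrac1\epsilon$ and, upon implicit differentiation, the coefficient of $\partial_\epsilon u_0$ equals $\lambda+O(\epsilon)$ while the remaining terms are $O(\epsilon(\log\tfrac1\epsilon)^{C})$ (here one uses $\psi(y)=O(|y|^5)$), one obtains $|\partial_\epsilon u_0|,|\partial_\epsilon^2 u_0|\lesssim\epsilon(\log\tfrac1\epsilon)^{C}$. Differentiating \eqref{phiepsilon} in $\epsilon$ shows $\Phi_\epsilon$ and $\tfrac{d\Phi_\epsilon}{du}$ are twice $\epsilon$-differentiable with bounds uniform on $2\log\tfrac1\epsilon\cdot I$ (only powers of $\log\tfrac1\epsilon$ appear, since $\lambda,a>0$ and $\psi''$ is small), and via the inverse function theorem and \eqref{dinv} this transfers to $\Psi_\epsilon$ and $\tfrac{d\Psi_\epsilon}{dv}$, refining \eqref{unif3}--\eqref{unif7}. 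Finally $G_\epsilon(u)=(1+\tfrac{\lambda^2}{2}\epsilon^2u^2)(G_0+\epsilon\varphi)(u)$ gives, in the spirit of Claim \ref{uniformestimates2}, $|\partial_\epsilon^m\partial_u^nG_\epsilon(u)|\lesssim_{m,n}\langle u\rangle^{m+n}e^{-\lambda u^2/2}$. Expanding $\partial_\epsilon^2(b_\epsilon{\eta}_k)$ by the Leibniz and chain rules, each resulting term is a product of such factors supported where $|\Psi_\epsilon(v)+u_0|\asymp 2^k$; the Gaussian weight then yields $\|\partial_\epsilon^2(b_\epsilon{\eta}_k)\|_{L^2}^2\lesssim 2^ke^{-c4^k}$ and $\|\tfrac{d}{dv}\partial_\epsilon^2(b_\epsilon{\eta}_k)\|_{L^2}^2\lesssim 2^{3k}e^{-c4^k}$ (the extra $\mathrm{poly}(k,\log\tfrac1\epsilon)$ factors are absorbed after slightly shrinking $c$). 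Setting $b_k:=\|\widehat{\partial_\epsilon^2(b_\epsilon{\eta}_k)}\|_{L^1}$, the Cauchy--Schwarz and Plancherel argument used for $a_k$ gives $b_k\lesssim 2^{3k/2}e^{-c2^{2k-3}}$, so $\sum_k 2^kb_k<\infty$, and applying Plancherel in \eqref{pe2vtilde} as in Case 1 establishes the bound on $\mathfrak{C}_k^*$. The case $k=0$ is identical.

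For the off-diagonal regimes ($(x,t)\in\mathfrak{C}_0$, or $(x,t)\in\mathfrak{C}_j$ with $|j-k|>2$, or $(x,t)\in\mathfrak{C}_\infty$) I would argue as in Cases 2 and 3 of Proposition \ref{mainestimates}, via the relation $\widetilde v_{\epsilon,k}=e^{it\phi_\epsilon(u_0)}v_{\epsilon,k}$ and integration by parts in \eqref{vek}. Two sources of growth in $t$ (resp.\ $x$) appear upon applying $\partial_\epsilon^2$: the factor $it\,(\partial_\epsilon\phi_\epsilon)(u_0)$ from the phase $e^{it\phi_\epsilon(u_0)}$ (the $u_0$-term dropping out by stationarity), and the factor $-it\,\partial_\epsilon\phi_\epsilon(u)$ produced when $\partial_\epsilon$ hits $e^{-it\phi_\epsilon(u)}$. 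Both are harmless: since $\psi(y)=O(|y|^5)$, one has $\partial_\epsilon^j\phi_\epsilon(u)=2a\epsilon u^4+\partial_\epsilon^j\bigl(\epsilon^{-2}\psi(\epsilon u)\bigr)=O(\epsilon(\log\tfrac1\epsilon)^{C})$ for $u$ in the relevant dyadic range, so these factors are $O(|t|\,\epsilon(\log\tfrac1\epsilon)^{C})$. As $\phi_\epsilon$ has no critical point on $\mathcal{D}_k$ with $|\partial_u\phi_\epsilon|\gtrsim\lambda 2^k$ there (for $k$ large, and $\gtrsim\lambda$ in general, by the analogue of \eqref{dphilowerbd}), integrating by parts $N$ times, with $N$ as large as needed, converts the $|t|^2$ from $\partial_\epsilon^2$ into the claimed $\langle t\rangle^{-1}$ or $\langle x\rangle^{-1}$ decay; the $\epsilon(\log\tfrac1\epsilon)^C$ are absorbed and, by the bounds on $\partial_\epsilon^mG_\epsilon$, the coefficient $b_k$ retains Gaussian decay in $k$. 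The case $k=0$ is analogous.

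I expect the main obstacle to be the bookkeeping in the second paragraph: tracking the $\epsilon$-derivatives of $\Psi_\epsilon$ and $\tfrac{d\Psi_\epsilon}{dv}$ through the inverse function theorem while $u_0$ is itself an implicit function of $\epsilon$, and checking that every $\log\tfrac1\epsilon$ generated along the way is innocuous because it is multiplied against $e^{-\lambda u^2/2}$ with $|u|\asymp 2^k\lesssim\log\tfrac1\epsilon$. The first-derivative version (bounds for $\partial_\epsilon\widetilde v_{\epsilon,k}$) is strictly simpler and proved in the same way. Having both, one combines them with the reasoning behind \eqref{intv} — summing over the sextuples $(k_1,\dots,k_6)$ and over the cone decomposition \eqref{decR2} — to obtain uniform-in-$\epsilon$ bounds for $I_0(\epsilon),I_1(\epsilon),I_2(\epsilon)$, hence \eqref{intd2v}, and thus Proposition \ref{d2epsilon}.
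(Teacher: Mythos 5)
Your proposal is correct and follows the paper's own route essentially verbatim: the paper likewise reduces the $\mathfrak{C}_k^*$ case to the $\epsilon$-independence of the phase $v^2/2$ together with uniform bounds on $\partial_\epsilon^j\bigl(\tfrac{d^j\Psi_\epsilon}{dv^j}\bigr)$ (its Lemma \ref{unifdpsi}) and on $\partial_\epsilon^m\partial_u^nG_\epsilon$ (Claim \ref{uniformestimates2}), and disposes of the remaining cones by the non-stationary-phase integration by parts of Cases 2--3 of Proposition \ref{mainestimates}. Your write-up is if anything more explicit than the paper's (which omits the details, e.g.\ the $\epsilon$-dependence of $u_0$ and the $it\,\partial_\epsilon\phi_\epsilon$ factors); the only caveat is that the logarithmic losses are controlled because they either carry a factor of $\epsilon$ or reduce to powers of $2^k$ via $|v|\lesssim 2^k$ on the support of $\eta_k$, rather than by literally shrinking the constant in $e^{-c4^k}$.
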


\noindent The proof follows the same steps of Proposition \ref{mainestimates}, with only one difference:

\begin{center}
{\em We need appropriate bounds for $\partial_\epsilon\Big(\frac{d^j\Psi_\epsilon}{dv^j}\Big)$ and $\partial_\epsilon^2\Big(\frac{d^j\Psi_\epsilon}{dv^j}\Big)$ for $j\in\{1,2\}$.}
\end{center}
Let us briefly outline how to accomplish this. Using \eqref{defphi}, we have that
$$\Phi_\epsilon (\Psi_\epsilon(v))=\Psi_\epsilon(v)\Big(\lambda+(12 au_0^2+8au_0 \Psi_\epsilon(v)+2a \Psi_\epsilon(v)^2)\epsilon^2+\psi''(\epsilon(u_0+\theta \Psi_\epsilon(v)))\Big)^{1/2}.$$
Differentiate both sides of the last identity with respect to $\epsilon$, the left hand side being obviously equal to 0. For the right hand side, we get two kinds of terms, depending on whether or not they contain a factor of the form $\partial_\epsilon \Psi_\epsilon$. Grouping together in one side of the equation all the terms which do contain such a factor, we can estimate:
\begin{equation}\label{unif4}
|\partial_\epsilon \Psi_\epsilon(v)|\leq C,\;\;\;\forall v\in C\log\frac{1}{\epsilon}\cdot I.
\end{equation}

\noindent It is also elementary to show that\begin{equation}\label{unifn}
\Big|\partial_\epsilon \Big(\frac{d\Phi_\epsilon}{du}\Big)(u)\Big|\leq C,\;\;\;\forall u\in 2\log\frac{1}{\epsilon}\cdot I.
\end{equation} 

\noindent Differentiating both sides of \eqref{dinv} with respect to $\epsilon$, we obtain
$$\partial_\epsilon \Big(\frac{d\Psi_\epsilon}{dv}\Big)(v)=-\frac{\partial_\epsilon\Big(\frac{d\Phi_\epsilon}{du}(\Psi_\epsilon(v))\Big)}{\Big(\frac{d\Phi_\epsilon}{du}(\Psi_\epsilon(v))\Big)^2}.$$
\noindent Using estimates \eqref{unif2} and \eqref{unif4}, we similarly conclude that
\begin{equation}\label{unif5}
\Big|\partial_\epsilon \Big(\frac{d\Psi_\epsilon}{dv}\Big)(v)\Big|\leq C,\;\;\;\forall v\in C\log\frac{1}{\epsilon}\cdot I.
\end{equation}
Repeating this whole procedure once again, we conclude in an analogous way that
\begin{equation}\label{unif6}
\Big|\partial_\epsilon^2 \Big(\frac{d\Psi_\epsilon}{dv}\Big)(v)\Big|\leq Cv^2,\;\;\;\forall v\in C\log\frac{1}{\epsilon}\cdot I.
\end{equation}
The terms $\partial_\epsilon \Big(\frac{d^2\Psi_\epsilon}{dv^2}\Big)$ and $\partial_\epsilon^2 \Big(\frac{d^2\Psi_\epsilon}{dv^2}\Big)$ can be dealt with in a similar way. Recalling what we already know from \eqref{unif3} and \eqref{unif7}, we arrive at the following lemma:

\begin{lemma}\label{unifdpsi}
The following estimates hold for $j\in\{1,2\}$, for every sufficiently small $\epsilon>0$ and for every $v\in C\log\frac{1}{\epsilon}\cdot I$:
\begin{itemize}
\item[(i)]
$\Big|\frac{d^j\Psi_\epsilon}{dv^j}(v)\Big|\leq C$;
\item[(ii)]
$\Big|\partial_\epsilon \Big(\frac{d^j\Psi_\epsilon}{dv^j}\Big)(v)\Big|\leq C$;
\item[(iii)]
$\Big|\partial_\epsilon^2 \Big(\frac{d^j\Psi_\epsilon}{dv^j}\Big)(v)\Big|\leq Cv^2$.
\end{itemize}
\end{lemma}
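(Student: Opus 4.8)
The plan is to observe that the lemma repackages estimates that are either already in hand or follow by iterating one elementary construction, so no new oscillatory-integral input is needed at this stage. Part (i) is nothing new: for $j=1$ it is \eqref{unif3} and for $j=2$ it is \eqref{unif7}. The content is therefore (ii) and (iii), and these follow by differentiating in $\epsilon$, with $v$ held fixed, the defining relation $\Phi_\epsilon(\Psi_\epsilon(v))=v$ together with the two identities $\frac{d\Psi_\epsilon}{dv}(v)=(\frac{d\Phi_\epsilon}{du}(\Psi_\epsilon(v)))^{-1}$ (which is \eqref{dinv}) and $\frac{d^2\Psi_\epsilon}{dv^2}(v)=-\frac{d^2\Phi_\epsilon}{du^2}(\Psi_\epsilon(v))\cdot(\frac{d\Psi_\epsilon}{dv}(v))^{3}$. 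Each resulting quantity is a rational expression whose numerator is assembled from $\Psi_\epsilon(v)$, the derivatives $\frac{d^k\Phi_\epsilon}{du^k}\circ\Psi_\epsilon$ with $k\le2$, and their $\epsilon$-derivatives, and whose denominator is a power of $\frac{d\Phi_\epsilon}{du}\circ\Psi_\epsilon$; since the latter is bounded away from zero by \eqref{unif2}, and $|\Psi_\epsilon(v)|\asymp|v|$ by \eqref{unif1}, everything reduces to uniform control of $\partial_\epsilon^m\partial_u^k\Phi_\epsilon$ for $m,k\le2$, after which the conclusion is pure calculus.

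To get that control I would return to \eqref{defphi}--\eqref{phiepsilon} and write $\Phi_\epsilon(u)=u\,A_\epsilon(u)^{1/2}$, where, replacing the Lagrange remainder in \eqref{phiepsilon} by the integral remainder,
$$A_\epsilon(u)=\lambda+12a\epsilon^2u_0^2+8a\epsilon^2u_0u+2a\epsilon^2u^2+2\int_0^1(1-s)\,\psi''(\epsilon(u_0+su))\,ds.$$
The point is \emph{structural}: the $\epsilon$-dependence of $A_\epsilon$ enters only through $\epsilon^2$ times polynomials in $(u,u_0)$, and through the argument $\epsilon(u_0+su)$ of $\psi''$ with $\psi''(y)=O(|y|^3)$, while on the relevant domain $|u|\lesssim\log\frac1\epsilon$ and, by \eqref{s_0}, $|u_0|\lesssim\log\frac1\epsilon$; moreover implicit differentiation of \eqref{s_0} gives $|\partial_\epsilon u_0|,|\partial_\epsilon^2 u_0|\lesssim1$, in fact $o(1)$. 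Since $\epsilon^2(\log\frac1\epsilon)^m\to0$ for every $m$, this forces $A_\epsilon=\lambda+o(1)$ uniformly in $\epsilon$, so $A_\epsilon$ is bounded above and below, and likewise $\partial_\epsilon^m\partial_u^kA_\epsilon=O(1)$ for $m,k\le2$ — the single borderline term being $\partial_\epsilon^2(2a\epsilon^2u^2)=4au^2$, which contributes a factor $\langle u\rangle^2$ that, after the substitution $u=\Psi_\epsilon(v)\asymp v$, becomes the weight appearing in (iii). Inserting these into $\partial_u^k\Phi_\epsilon=\partial_u^k(u\,A_\epsilon(u)^{1/2})$ and using the lower bound on $A_\epsilon$ yields \eqref{unifn} together with its $\partial_u^2$-analogue, as well as $|\partial_\epsilon^2(\partial_u^k\Phi_\epsilon)(u)|\lesssim\langle u\rangle^2$.

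The bootstrap is then mechanical. Differentiating $\Phi_\epsilon(\Psi_\epsilon(v))=v$ once in $\epsilon$ gives $\partial_\epsilon\Psi_\epsilon(v)=-(\partial_\epsilon\Phi_\epsilon)(\Psi_\epsilon(v))\cdot(\frac{d\Phi_\epsilon}{du}(\Psi_\epsilon(v)))^{-1}$, hence \eqref{unif4}; differentiating \eqref{dinv} once and twice in $\epsilon$, and feeding in the bounds above, produces (ii) and (iii) for $j=1$, namely \eqref{unif5} and \eqref{unif6}; and differentiating the identity for $\frac{d^2\Psi_\epsilon}{dv^2}$ in $\epsilon$, and inserting the $j=1$ estimates together with \eqref{unif7}, gives the $j=2$ statements. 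I expect the one point requiring genuine care to be the bookkeeping of the previous paragraph: one must check that in every monomial generated by the quotient and chain rules each occurrence of $u$, $u_0$ or $\Psi_\epsilon(v)$ carries enough powers of $\epsilon$ to absorb the logarithmically large size of the domain, with the single unavoidable exception responsible for the quadratic weight in (iii). The remaining manipulations — differentiation of composite and inverse functions — are entirely routine.
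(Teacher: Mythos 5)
Your strategy is the paper's, which is only sketched there: write $\Phi_\epsilon(u)=u\,A_\epsilon(u)^{1/2}$, exploit the structure of $A_\epsilon$ (every $\epsilon$-dependence enters either with a prefactor $\epsilon^2$ times a polynomial in $u$ and $u_0$, or through $\psi$ at an argument of size $O(\epsilon\log\tfrac1\epsilon)$) to bound $\partial_\epsilon^m\partial_u^k\Phi_\epsilon$, and then bootstrap through the inverse-function identities \eqref{dinv} and the corresponding formula for $\tfrac{d^2\Psi_\epsilon}{dv^2}$. The use of the integral form of the Taylor remainder is a genuine improvement over \eqref{defphi}, since the $\theta$-dependence there is awkward to differentiate.

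There is, however, a real hole in your bookkeeping, and it means the argument as written does not deliver the weight $Cv^2$ in part (iii). You assert that the single borderline term in $\partial_\epsilon^2 A_\epsilon$ is $\partial_\epsilon^2(2a\epsilon^2u^2)=4au^2$. But the companion terms $12a\epsilon^2u_0^2$ and $8a\epsilon^2u_0u$ likewise lose both powers of $\epsilon$ after two $\epsilon$-derivatives, leaving $24au_0^2+o(1)$ and $16au_0u+o(1)$. These are \emph{not} dominated by $v^2$: at $v=0$ one has $u=\Psi_\epsilon(0)=0$, while on the cone $\mathfrak{C}_{K(\epsilon)}$ one has $u_0\asymp|x/t|/\lambda\asymp\log\tfrac1\epsilon$. (Relatedly, your assertion that $|\partial_\epsilon^2 u_0|\lesssim 1$ is false: implicit differentiation of \eqref{s_0} twice gives $\partial_\epsilon^2u_0\asymp u_0^3$, which is of size $(\log\tfrac1\epsilon)^3$; this particular slip is harmless because $u_0''$ always appears in $\partial_\epsilon^m\partial_u^k A_\epsilon$ multiplied by at least $\epsilon^2$, but it is a symptom of the same neglect of $u_0$-dependent monomials.) What your chain of estimates actually yields is $|\partial_\epsilon^2(\tfrac{d^j\Psi_\epsilon}{dv^j})(v)|\lesssim v^2+u_0^2$, not $\lesssim v^2$. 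In fairness the paper's own outline makes the identical omission, and the weaker bound suffices where the lemma is used: in Proposition \ref{mainestimates2} the extra weight $u_0^2\asymp4^k$ on $\mathfrak{C}_k^*$ is absorbed by the Gaussian decay $e^{-\lambda4^{k-1}}$ of $b_\epsilon\eta_k$. But your ``single unavoidable exception'' claim is incorrect, and a complete write-up must either track the $u_0^2$ and $u_0u$ terms and adjust the stated weight, or explain explicitly why the discrepancy is immaterial downstream.
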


\noindent Lemma \ref{unifdpsi} can be used together with the estimates from Claim \ref{uniformestimates2} to prove Proposition \ref{mainestimates2}. We omit the details.

\section{Appendix 2: Two explicit calculations}

\noindent{\textbf{Claim \ref{c1}.}}
\begin{equation}\label{c1second}
3\lambda^2\Re\iint_{\R^2} |G_1(x,t)|^4\overline{G_1(x,t)}G_2(x,t) dxdt=\frac{3}{2}\pi^{3/2}\lambda^{-1/2}{\mathbf C_F}[\lambda]^6.
\end{equation}

\begin{proof}
As first observations, note that 
\begin{equation}\label{first}
\overline{G_1(x,t)}G_2(x,t)=\Big(\lambda^{-1}(1+it)^{-1}-\lambda^{-2}x^2(1+it)^{-2}\Big) |G_1(x,t)|^2
\end{equation}
and 
$$|G_1(x,t)|^2=\frac{2\pi}{\lambda}(1+t^2)^{-1/2}e^{-\frac{x^2}{\lambda(1+t^2)}}.$$
It follows that the left hand side in \eqref{c1second} equals
$$3\lambda^2\Big(\frac{2\pi}{\lambda}\Big)^3\Re\iint\Big(\lambda^{-1}(1+it)^{-1}-\lambda^{-2} x^2(1+it)^{-2}\Big)(1+t^2)^{-3/2}e^{-\frac{3x^2}{\lambda (1+t^2)}}dxdt.$$
Write $(1+it)^{-1}=(1+t^2)^{-1}(1-it)$, $(1+it)^{-2}=(1+t^2)^{-2}(1-it)^2$, and change variables $y=\frac{x}{(1+t^2)^{1/2}}$ to compute
\begin{align*}
{\mathbf I}:=\Re\iint(1+t^2)^{-1}(1-it)(1+t^2)^{-3/2}&e^{-\frac{3x^2}{\lambda (1+t^2)}}dxdt=\iint(1+t^2)^{-5/2}e^{-\frac{3x^2}{\lambda (1+t^2)}}dxdt\\
=&\Big(\int_{-\infty}^\infty\frac{1}{(1+t^2)^{2}}dt\Big)\cdot\Big(\int_{-\infty}^\infty e^{-\frac{3y^2}{\lambda}}dy\Big)\\
=&\frac{\pi}{2}\cdot\Big(\frac{\pi}{3/\lambda}\Big)^{1/2}=\frac{\pi^{3/2}}{2\sqrt{3}} \lambda^{1/2}
\end{align*}
and
\begin{align*}
{\mathbf{II}}:=&\Re\iint x^2(1+t^2)^{-2}(1-it)^2(1+t^2)^{-3/2}e^{-\frac{3x^2}{\lambda (1+t^2)}}dxdt\\
=&\iint (1+t^2)^{-7/2}(1-t^2)x^2e^{-\frac{3x^2}{\lambda (1+t^2)}}dxdt\\
=&\Big(\int_{-\infty}^\infty\frac{1-t^2}{(1+t^2)^{2}}dt\Big)\Big(\int_{-\infty}^\infty y^2e^{-\frac{3y^2}{\lambda}}dy\Big)=0.\\
\end{align*}

\noindent All in all we have that
\begin{align*}
3 \lambda^2\Re\iint |G_1(x,t)|^4\overline{G_1(x,t)}G_2(x,t) &dxdt=3 \lambda^2\Big(\frac{2\pi}{\lambda}\Big)^3(\lambda^{-1} {\mathbf I}- \lambda^{-2}{\mathbf{II}})=3\lambda^{-2}(2\pi)^3 {\mathbf I}+0\\
=&\frac{3}{2}\pi^{3/2}\lambda^{-1/2}\frac{(2\pi)^3}{\sqrt{3}}\lambda^{-1}=\frac{3}{2}\pi^{3/2}\lambda^{-1/2}{\mathbf C_F}[\lambda]^6.\qedhere
\end{align*}
\end{proof}

\noindent{\textbf{Claim \ref{c2}.}}
\begin{equation}\label{c2second}
-6a\iint_{\R^2} \Re\Big\{it|G_1(x,t)|^4\overline{G_1(x,t)}G_3(x,t) \Big\}dxdt=-4a\pi^{3/2}\lambda^{-7/2}{\mathbf C_F}[\lambda]^6.
\end{equation}

\begin{proof}
As with \eqref{first}, we have that

$$\overline{G_1(x,t)}G_3(x,t)=\Big(3\lambda^{-2}(1+it)^{-2}-6\lambda^{-3}x^2(1+it)^{-3}+\lambda^{-4}x^4(1+it)^{-4}\Big)  |G_1(x,t)|^2,$$
and so the left hand side in \eqref{c2second} equals

\begin{multline*}
-6a\Big(\frac{2\pi}{\lambda}\Big)^3\iint \Re\Big\{it \Big(3 \lambda^{-2}(1+it)^{-2}-6 \lambda ^{-3}x^2(1+it)^{-3}+ \lambda^{-4}x^4(1+it)^{-4}\Big)\cdot\\
\cdot(1+t^2)^{-3/2}e^{-\frac{3x^2}{\lambda(1+t^2)}}\Big\}dxdt.
\end{multline*}

Note that $\Re \{i(1+it)^{-2}\}=2t(1+t^2)^{-2}$, $\Re \{i(1+it)^{-3}\}=(3t-t^3)(1+t^2)^{-3}$ and $\Re \{i(1+it)^{-4}\}=(4t-4t^3)(1+t^2)^{-4}$. Change variables $y=\frac{x}{(1+t^2)^{1/2}}$ to compute

\begin{align*}
{\mathbf I}:=\iint 2t^2(1+t^2)^{-2}(1+t^2)^{-3/2}&e^{-\frac{3x^2}{\lambda (1+t^2)}}dxdt=\iint 2t^2(1+t^2)^{-7/2}e^{-\frac{3x^2}{\lambda (1+t^2)}}dxdt\\
=&\Big(\int_{-\infty}^\infty\frac{2t^2}{(1+t^2)^{3}}dt\Big)\cdot\Big(\int_{-\infty}^\infty e^{-\frac{3y^2}{\lambda}}dy\Big)\\
=&\frac{\pi}{4}\cdot\Big(\frac{\pi}{3/\lambda}\Big)^{1/2}=\frac{\pi^{3/2}}{4\sqrt{3}} \lambda^{1/2},
\end{align*}

\begin{align*}
{\mathbf{II}}:=&\iint x^2 (3t^2-t^4)(1+t^2)^{-3}(1+t^2)^{-3/2}e^{-\frac{3x^2}{\lambda (1+t^2)}}dxdt\\
=&\iint (3t^2-t^4)(1+t^2)^{-9/2}x^2 e^{-\frac{3x^2}{\lambda (1+t^2)}}dxdt\\
=& \Big(\int_{-\infty}^\infty\frac{3t^2-t^4}{(1+t^2)^{3}}dt\Big)\Big(\int_{-\infty}^\infty y^2e^{-\frac{3y^2}{\lambda}}dy\Big)=0
\end{align*}
and

\begin{align*}
{\mathbf{III}}:=&\iint x^4 (4t^2-4t^4)(1+t^2)^{-4}(1+t^2)^{-3/2}e^{-\frac{3x^2}{\lambda (1+t^2)}}dxdt\\
=&\iint(4t^2-4t^4)(1+t^2)^{-11/2}x^4e^{-\frac{3x^2}{\lambda (1+t^2)}}dxdt\\
=&\Big(\int_{-\infty}^\infty\frac{4t^2-4t^4}{(1+t^2)^{3}}dt\Big)\cdot\Big(\int_{-\infty}^\infty y^4e^{-\frac{3y^2}{\lambda}}dy\Big)\\
=& -\pi\cdot\Big(\frac{1}{12}\Big(\frac{\pi}{3/\lambda}\Big)^{1/2}\Big)=-\frac{\pi^{3/2}}{12\sqrt{3}} \lambda^{5/2}.
\end{align*}

\noindent All in all we have that
\begin{align*}
-6a\iint \Re\Big\{it|G_1(x,t)|^4\overline{G_1(x,t)}G_3(x,t) \Big\}dxdt&=-6a\Big(\frac{2\pi}{\lambda}\Big)^3(3\lambda^{-2}{\mathbf I}-6\lambda^{-3}{\mathbf{II}}+\lambda^{-4}{\mathbf{III}})\\
&=-a(18\lambda^{-2}{\mathbf I}+0+6\lambda^{-4}{\mathbf{III}})\Big(\frac{2\pi}{\lambda}\Big)^3\\
&=-\Big(\frac{18}{4}-\frac{6}{12}\Big)a\pi^{3/2}\lambda^{-7/2}\frac{(2\pi)^3}{\sqrt{3}}\lambda^{-1}\\
&=-4a\pi^{3/2}\lambda^{-7/2}{\mathbf C_F}[\lambda]^6,
\end{align*}
as claimed.
\end{proof}

\end{document}